\newtheorem{remark}{Remark}
\newtheorem{theorem}{Theorem}
\newtheorem{lemma}{Lemma}
\newtheorem{proposition}{Proposition}
\newtheorem{corollary}{Corollary}
\def \R{{\mathbb{R}}}
\def \Z{{\mathbb{Z}}}
\title{Diffusion limit of the Vlasov equation in the weak turbulent regime}
\author{Claude Bardos\footnotemark[1] \\
  Laboratoire J.-L. Lions,
  Sorbonne Universit\'e, \\ 
  BC 187, 4 place Jussieu,
  75252 Paris, Cedex 05, France.\\
  \and\\
  Nicolas Besse \footnotemark[2] \\
  Laboratoire J.-L. Lagrange,   \\
  Observatoire de la C\^ote d'Azur, Universit{\'e} C\^ote d'Azur, \\
  Bd de l'observatoire CS 34229, 06300 Nice, Cedex 4, France.\\  
} 
\begin{document}

\maketitle

{\bf Keywords:} Vlasov equation, diffusion limit, weak turbulence regime,
quasilinear theory, resonance broadening theory, Landau damping, plasma physics.

\renewcommand{\thefootnote}{\fnsymbol{footnote}}

\footnotetext[1]{
  claude.bardos@gmail.com
}

\footnotetext[2]{
  Author to whom correspondence should be addressed: Nicolas.Besse@oca.eu
}

\renewcommand{\thefootnote}{\arabic{footnote}}

\begin{abstract}
  In this paper we study the Hamiltonian dynamics of charged particles
  subject to a non-self-consistent stochastic electric field, when the
  plasma is in the so-called weak turbulent regime. We show that the
  asymptotic limit of the Vlasov equation is a diffusion equation in
  the velocity space, but homogeneous in the physical space.  We
  obtain a diffusion matrix, quadratic with respect to the electric
  field, which can be related to the diffusion matrix of
  the resonance broadening theory and of the quasilinear theory, depending
  on whether the typical autocorrelation time of particles is finite
  or not.  In the self-consistent deterministic case, we show that the
  asymptotic distribution function is homogenized in the space
  variables, while the electric field converges weakly to zero. We
  also show that the lack of compactness in time for the electric
  field is necessary to obtain a genuine diffusion limit.  By contrast, the time
  compactness property leads to a ``cheap'' version of the Landau
  damping: the electric field converges strongly to zero, implying the
  vanishing of the diffusion matrix, while the distribution function
  relaxes, in a weak topology, towards a spatially homogeneous
  stationary solution of the Vlasov-Poisson system.
\end{abstract}

\section{Introduction}
\label{s:intro}
Here, we are interested in a problem of particle diffusion which is
produced by the wave-particle interaction.  In plasma physics, the
wave-particle interaction is an important phenomenon, which stands at
the root of Landau damping, of wave heating, of numerous
instabilities, and of some regimes of anomalous transport in
magnetically confined plasmas. This work is closely related to the
so-called quasilinear (QL) theory, which describes the nonlinear
relaxation of the weak warm beam-plasma instability through the
derivation of a diffusion equation in the velocity variable conjugated
with the prediction of an associated diffusion coefficient.  This
topic has led to a longstanding controversy that is not solved yet 
\cite{VVS62, DP62, DU66, RG71, DU72, ALP79, GSS80, BGD81, LP83a,
  LP83b, LP84, CEV90, TDM91, CD92, LD93a, LD93b, HDOS95, DC97, SS97,
  LP99, EE02, EE03, EE03b, BEEB11, EP10, Els12, Esc10,
  LPA18}. References cited above are not exhaustive but testify to the
huge literature on this subject. For a brief history on the
development of the QL theory, we refer the reader to references
\cite{BEEB11, LPA18}.  Furthermore the QL diffusion coefficient is
quite frequently used for modelling particle transport in different
branches of plasma physics, such as laser-plasma interaction or
 magnetized plasma turbulence. Since the QL approximation is
ubiquitous, particularly in kinetic modelling, it is then important to
assess, in the most rigorous possible way, whether the QL theory is valid or not.
A complete and rigorous proof of the QL theory goes beyond the purpose
of this paper, which aims at taking stock of what can or cannot be
rigorously proven at this time.

We now sketch this problem in dimension one (see, e.g., \cite{Esc10}
for an intuitive introduction and  Chapters 8-9 of \cite{Dav72} or
Chapter 7 of \cite{EE03b} for a more exhaustive one). We consider
a two-dimensional distribution function of
particles in the two-dimensional phase-space $(x,v)$.
This distribution function is initially given by a one-dimensional (in $v$) spatially
uniform (in $x$) beam-plasma system.  This beam corresponds to a gentle and
small bump on the tail of the electronic plasma velocity distribution
function (see, e.g., Section~9.4 of \cite{KT73}). The study of the bump-on-the-tail instability dates
back the pioneering work of Buneman \cite{B59} on the two-stream instability, where
each stream is considered as a mono-kinetic beam. 
Using the Nyquist method (see, e.g.,  Section~9.6 of \cite{KT73}) Penrose \cite{P60} derived a criterion, the so-called
Penrose criterion for instability,
under which the beam-plasma system distribution function (spatially
uniform and one-dimensional in the velocity variable) constitutes an unstable equilibrium
(i.e. an unstable stationary solution of the Vlasov--Poisson equations).
Then, any initial small perturbations of the beam-plasma system are
destabilized by the inversion of the electron population corresponding
to the positive slope interval of the velocity distribution. This
gives rise to electrostatic waves, which first grow linearly until the
beginning of a saturation stage, where the amplitude of waves reaches a
non-negligible value. In this resulting wave spectrum, the particle
dynamics becomes chaotic enough in their range of phase velocities, so
that  the  bump is eroded with eventually a plateau formation in the
distribution function.  Simultaneously, there is a transfer of
momentum from particles to electric waves, generating a turbulent
spectrum of waves.  This scenario was first predicted on a theoretical
basis \cite{VVS62, DP62} by considering the wave-particle interaction
as perturbative and neglecting all nonlinear wave-wave interactions in
the Vlasov--Poisson equation, except for their effect on the space-averaged
distribution function $f=f(t,v)$. This led to the set of QL
equations coupling the distribution function $f$ and the Fourier modes
$E(t,k)$ of the electric field,
\begin{eqnarray}
&&\partial_t f(t,v) -\partial_v (D_{QL}(t,v) \partial_v
  f(t,v))=0, \label{EDQL}\\ &&\partial_t |E(t,k)|^2 = 2\gamma(t,k)
  |E(t,k)|^2, \label{EDOE}
\end{eqnarray}  
where the QL diffusion coefficient is given by
\begin{equation}
\label{CDQL}
D_{QL}(t,v) =\pi \sum_{k\in\Z} |E(t,k)|^2\delta(\omega(t,k) -kv).
\footnote{In fact, this diffusion coefficient is a formal approximation for small $\gamma$
  of the quasilinear diffusion coefficient
  $D_{QL}(t,v) = \sum_{k\in\Z} |E(t,k)|^2 \frac{\gamma(t,k)}{(\omega(t,k)-kv)^2 +\gamma^2(t,k)}$
  in the resonant region of velocity space that corresponds to particle velocities
  satisfying $\omega(t,k)-kv=0$.}
\end{equation}
The real functions $(t,k)\mapsto\omega(t,k)$ and $(t,k)\mapsto \gamma(t,k)$
satisfy the following dispersion equation,
\begin{equation}
  \label{DEW}
\begin{aligned}  
&\mathbb{D}(k,\omega(t,k)+{\rm i}\gamma(t,k))=0, \quad \mbox{ with }\\
& \mathbb{D}(k,\omega(t,k)+{\rm i}\gamma(t,k)):= 1+
\frac{\omega_p^2}{k^2} \int_{\R} dv\, \frac{ k\partial_v
  f(t,v)}{\omega(t,k)-kv+{\rm i}\gamma(t,k)},
\end{aligned}
\end{equation}
where $\omega_p$ is  the plasma frequency \cite{VVS62, DP62,  Dav72,
  KT73}. The system \eqref{EDQL}-\eqref{DEW} is a closed and
self-consistent system of equations.  We recall that in the case of
the gentle-bump-on-the-tail instability an approximate solution of the
dispersion equation is given (see, e.g., \cite{KT73}) by the so-called
Bohm--Gross relation,
\begin{equation}
  \label{BGR}
\omega^2(t,k)\simeq\omega^2(k):=\omega_p^2(1+3k^2\lambda_D^2),
\end{equation}
with the Debye length $\lambda_D:= v_{th}/\omega_p$ and the thermal velocity squared
$v_{th}^2:= \int f v^2 dv/\int f dv$. The approximate growth rate is given by
\begin{equation}
  \label{QLGR}
\gamma(t,k) \simeq\frac \pi 2  \frac{\omega_p^2}{k^2}\,\omega(k)\int_{\R} dv \, \delta(\omega(k) -kv) k \partial_v
f(t,v).
\end{equation}
Therefore the system constituted by \eqref{EDQL}-\eqref{EDOE} and
\eqref{BGR}-\eqref{QLGR}, is also a closed and self-consistent
system. This approximate solution relies on the following assumptions:
$v\mapsto f(t,v)$ is even, $\gamma/\omega \ll 1$ (weak instability)
and $k\lambda_D \ll 1$ (long wavelength approximation) (see, e.g.,
\cite{KT73} for more details).  Let us note that the dispersion
equation and its approximate solution are the same as for the Landau
damping case, where the damping rate $\gamma$ given by \eqref{QLGR} is
negative because the slope of $f$ so is.

We must emphasize that, even
from a physical and physicists point of view, the derivation of
quasilinear theory from either a deterministic or a probabilistic
approach is actually not clear.  Indeed the original 1962 derivation
\cite{DP62,VVS62}, briefly exposed above, is deterministic. Right
after there were many other derivations of the QL theory, most of them
(see, e.g., \cite{Dav72, AAPSS75, Bal05} and references therein)
appeal to some statistical arguments, like the random phase
approximation (RPA), and invoke some time/space decorrelation
hypotheses.  From a numerical point of view, it has been shown in
\cite{BEEB11}, that a statistical ensemble average of solutions of
the Vlasov--Poisson system is required to recover a QL description
of the long time behavior of the weak warm beam-plasma system.

In this work, we consider both the self-consistent deterministic case
and  the non-self-consistent stochastic case. Here, the term
``self-consistent'' means that the electric field is produced by the
particles themselves, through the coupling with the Poisson equation.  In
the self-consistent deterministic case, we show that the asymptotic
distribution function is homogenized in the space variables, while the
self-consistent electric field converges weakly to zero. As already observed
in related works (e.g. \cite{BGC97}), we show
that the lack of time compactness for the electric field is compulsory
to obtain a non-trivial and thus a diffusion limit for the Vlasov
equation. By contrast, the time compactness property leads to a cheap
version of the Landau damping, where the electric field converges
strongly to zero (entailing a null diffusion matrix) and the
distribution function converges weakly to a spatially homogeneous
stationary solution of the Vlasov-Poisson system. Actually the difficult
part is to show a non-zero diffusion limit in the presence of fast time oscillations.
Using a Duhamel formula, we formally derive a diffusion equation for the asymptotic
distribution function, which depends only on the time and velocity
variables.  Unfortunately we are not
able to justify rigorously this diffusion  limit.  This task requires
a new approach, which will be the matter of a future work. It is
worthwhile to mention that in the nonlinear regime, the saturation of
the weak warm beam-plasma instability generates in phase space a type
of turbulence, which has a very close connection to Hamiltonian chaos
theory (see, e.g., \cite{BE97, Esc18} and references therein).  A
complete treatment of the self-consistent deterministic case remains
an open issue, the proof of which must be based at least (but not
only) on the same ingredients than those used for proving the Landau
damping  \cite{MV10}, and more particularly on the control of nonlinear
wave-wave interactions (e.g. plasma echoes).  In contrast with Landau
damping, the main and not the least difficulty is that perturbations
are not arbitrarily small, since wave amplitudes are amplified by the instability.
From a mathematical point of view, this makes the nonlinear wave-wave
interactions more difficult to control, especially for showing that
the latter remains negligible at least at the end of the relaxation
process.  The proof of QL diffusion for this ``inverse landau
damping'' problem remains a challenge.  Nevertheless, studying the
non-self-consistent problem remains meaningful. Indeed, it was observed
in numerical simulations of the self-consistent problem \cite{BEEB11},
that when the distribution function is enough phase-space homogenized
(after quite a long time), the problem falls into the
non-self-consistent framework, even in the strong nonlinear regime.

As explained in plasma physics literature (see, e.g., \cite{Dav72,
  AAPSS75}), diffusion in the QL theory comes from the time
decorrelation property of the electric field, which can be considered
as a random field.  We then place ourselves in similar modelling
hypotheses. This second framework is then  closer to the case of
particles evolving in a given bath of (random) waves \cite{CEV90,
  BE98,  EE03b, EP10, Els12} or particles subject to a reversible
reflection law, which has  convenient mixing properties \cite{BGC97}.
As a result, we prove that the asymptotic limit of the Vlasov equation
is a diffusion equation in the velocity space, where the diffusion
matrix is given by the space-time autocorrelation function of the
stochastic electric field, the so-called Reynolds electric stress tensor.
Hence the diffusion matrix is quadratic
with respect to the electric field. By specializing a little bit more
the structure of our electric field, we recover, at least from a
formal point of view, the diffusion matrix predicted by the QL theory.
Our diffusion matrix can also be related to a refinement of the QL
theory called the resonance broadening theory \cite{DU66, Aam67,
  Wei69, Pri69, Dav72}.  For the present problem and to our knowledge,
our results have not been found in the literature so far. For the
proof of the non-self-consistent stochastic case,  we follow the
strategy introduced in the paper \cite{PV03} which relies on
short-time decorrelation properties.  These techniques have been
successfully used in various physical contexts \cite{LV04, BPS06,
  Bra02, CDG07, GP07}.

The outline of this paper is as follows. Section~\ref{s:hwtr} describes
the weak turbulent regime, which is characterized by some
dimensionless parameters.  In Section~\ref{s:SCDC}, we deal with the
self-consistent deterministic case.  In Section~\ref{s:NSCSC}, we deal
with the non-self-consistent stochastic case.  Section~\ref{ss:tef}
collects all the hypotheses on the stochastic electric field.
Section~\ref{s:mthm} contains our main result about the diffusion
limit of the Vlasov equation, the proof of which is done in
Section~\ref{s:proof}. Finally Section~\ref{s:LinkKT}  connects our
result with some kinetic turbulence theories of plasma physics such as
the resonance broadening theory and  the  quasilinear theory.

\section{The weak turbulent regime}
\label{s:hwtr}

\subsection{Dimensionless parameters}
\label{ss:dp}
The  Vlasov--Poisson system, describing the self-consistent evolution
of the distribution function of particles $f=f(t,x,v)$ in an
electrostatic plasma, reads
\begin{eqnarray}
\label{eqn:V}
&&\partial_t f + v\cdot\nabla_xf + \frac{q}{m}E\cdot \nabla_{v}f =0,
\\ && \label{eqn:GL} E = - \nabla \Phi, \quad  -\Delta \Phi
=\frac{q}{\varepsilon_0} \left(\int_{\R^d}dv\, f - 1\right).
\end{eqnarray}
Here $t\in\R$, $x\in \mathbb{T}^d:=(\R/2\pi\Z)^d$, and ${v}\in\R^d$,
represent respectively time, position, and velocity of particles of
charge $q$ and mass $m$, which are accelerated by the ``turbulent''
electric field $E=E(t,x)$. Since the plasma is globally neutral we
have
\begin{equation}
  \label{GNP}
\frac{1}{(2\pi)^{d}}\int_{ \mathbb{T}^d} dx \int_{\R^d} dv\, f =1.
\end{equation}
In order to have a well-posed problem, we must add the zero-mean
electrostatic condition
\begin{equation}
  \label{ZMEC}
\int_{ \mathbb{T}^d}dx\, E= 0.
\end{equation}
Indeed the condition \eqref{ZMEC} is necessary to invert the Laplacian operator $\Delta$.
The phase space is denoted by $Q:=\mathbb{T}^d \times \R^d$.  In order
to write the Vlasov--Poisson system \eqref{eqn:V}-\eqref{eqn:GL} in a
dimensionless form, we need to introduce a time unit $\hat{t}$, a
length unit $\hat{x}$, a velocity unit $\hat{v}$, and typical
amplitudes $\widehat{E}$,  $\widehat{\Phi}$ and $\hat{f}$ for the
electric field, the electric potential and the distribution function
respectively.  The dimensionless variables and physical quantities
read
\begin{equation}
\label{eqn:adimVQ}
t'=\frac{t}{\hat{t}}, \ \ \  x'=\frac{x}{\hat{x}},
\ \ \ v'=\frac{v}{\hat{v}}, \ \  \ E'=\frac{E}{\widehat{E}},
\ \ \ \Phi'=\frac{\Phi}{\widehat{\Phi}}, \ \ \ f'=\frac{f}{\hat{f}}. 
\end{equation}
We set
\begin{equation}
  \label{def:n}
  \hat{n}:=\hat{f}\hat{v}^d,
\end{equation}
the typical value of the macroscopic (charge) density of particles.
Using  the Poisson equation \eqref{eqn:GL}, we obtain the following
dimensional equation,
\begin{equation}
\label{eqn:dimE}
  \widehat{E}=\hat{n}\hat{x}q/\varepsilon_0.
\end{equation}  
In an electrostatic plasma, the typical length scale is the Debye
length $\lambda_D$, while the typical velocity is the thermal velocity
$v_{th}$. The plasma frequency $\omega_p$, which is related to a
typical fast oscillation time of an electrostatic plasma, is then
given by
\begin{equation}
  \label{def:pf}
  \omega_p=\frac{v_{th}}{\lambda_D}.
\end{equation}  
The typical electric and kinetic energies are respectively
$\mathcal{E}_{el}=\varepsilon_0|\widehat{E}|^2$, and
$\mathcal{E}_{kin}= \hat{n}m\hat{v}^2$. The distribution function $f$
has a typical  evolution/relaxation  time $\tau_{rel}$, while the
turbulent electric field has two time scales. A slow time scale
$\tau_{L}$ is associated with the instantaneous growth or damping rate
$\gamma_L:=1/\tau_L$ of the electric field, while a fast time scale is
related to both the wave (electric field) autocorrelation time
$\tau_{ac}$ and the particle autocorrelation time $\tau_D$. The
time $\tau_{ac}$ is the lapse of time needed for a resonant particle,
traveling at the same velocity as the phase velocity of a typical wave,
to cross the localized spatial extent of the oscillatory electric
field disturbance. This time can also be seen as the time needed for a
resonant particle to resolve the finite frequency width of the wave
spectrum.  In other words the time  $\tau_{ac}$ can be seen  as the
turnover or the life time of a typical wave measured or felt by a
resonant particle traveling at the same velocity as the phase velocity
of this wave.  Then the synchronization between a wave and a
particle occurs in a lapse of time of the order of $\tau_{ac}$, during
which they interact by momentum transfer.  The time $\tau_D$ is the
autocorrelation or spreading time of particles, i.e.  the lapse of
time after which two close particles or orbits are completely
separated from each other.  In the plasma physics literature, the time
$\tau_D$ is called the Dupree time \cite{DU66}. A particle
distribution function evaluated at two different times separated by a
time interval of the order of $\tau_D$ is then decorrelated.  The
relaxation time $\tau_{rel}$ of the distribution function is then of
the order of $\tau_D$.  In the self-consistent case, where the Poisson
equation is used to compute the electric field from the particle
distribution function, this implies that two evaluations in time of
the electric field, separated by  a time interval of the order of
$\tau_D$, are also decorrelated.  We now set
\begin{equation}
\label{def:txv}
\hat{t}:= \tau_L, \quad \hat{x}:= \lambda_D, \quad \hat{v}:= v_{th}.
\end{equation}
Let $\varepsilon \in (0,1)$ be a small dimensionless parameter and
$\uptau \in]0,+\infty]$ be a positive dimensionless parameter, which may
    be finite or infinite.  Then, the weak turbulence regime of an
    electrostatic plasma is defined by  (see, e.g., Chapter~7 in
    \cite{Dav72}),
\begin{equation}
  \label{def:wtrc}
  \frac{\mathcal{E}_{el}}{\mathcal{E}_{kin}}=\frac{\varepsilon_0|\widehat{E}|^2}{\hat{n}m\hat{v}^2}=
  \varepsilon, \quad \frac{1}{\omega_p \hat{t}} = \varepsilon^2, \quad
  \frac{\tau_{ac}}{\hat{t}}= \varepsilon^2, \quad
  \frac{\tau_D}{\hat{t}}=\uptau.
\end{equation}
Using \eqref{eqn:adimVQ}-\eqref{def:wtrc}  and dropping the prime
notation for dimensionless variables and physical quantities, we
obtain from \eqref{eqn:V}-\eqref{eqn:GL}, the dimensionless
Vlasov--Poisson equations,
\begin{eqnarray}
\label{eqn:V2}
&&\partial_t f^\varepsilon +
\frac{v}{\varepsilon^2}\cdot\nabla_xf^\varepsilon +
\frac{E^\varepsilon}{\varepsilon}\cdot \nabla_{v}f^\varepsilon =0,
\quad (t,x,v) \in \R^+\times\mathbb{T}^d \times \R^d, \\
\label{eqn:GL2}
&&E^\varepsilon = - \nabla \Phi^\varepsilon, \quad  -\Delta
\Phi^\varepsilon = \int_{\R^d} dv\, f^\varepsilon -1.
\end{eqnarray}
The global neutrality condition \eqref{GNP} and the zero-mean electrostatic
condition \eqref{ZMEC} keep the same. We just have to substitute
$f^\varepsilon$ to $f$ in \eqref{GNP} and $E^\varepsilon$ to $E$ in
\eqref{ZMEC}.

\subsection{Notation}
\label{ss:Not}
In the rest of this paper, the notation $\langle \cdot, \cdot \rangle$
denotes the duality bracket between the space of distributions
$\mathcal{D}'(\R^+\times Q)$ and the space $\mathcal{D}(\R^+\times Q)$
of indefinitely differentiable functions with compact support in
$\R^+\times Q$.
The $L^2$-scalar product on the phase space $Q=\mathbb{T}^d\times \R^d$ is  defined by
\begin{equation}
  \label{def:L2SP}
( f, g ) := \frac{1}{(2\pi)^d}\int_{\mathbb{T}^d} dx \int_{\R^d} dv\, f g^\ast, \quad \forall f,g \in L^2(Q),  
\end{equation}
where the notation $(\cdot)^\ast$ stands for the complex conjugate. We
then have, for $g\in L_{\rm loc}^1(\R^+\times Q)$,
\begin{equation*}
  \label{def:bracket}
\langle g, \varphi \rangle := \int_{\R^+} dt\, (g,\varphi), \quad
\forall \varphi \in \mathcal{D}(\R^+\times Q).
\end{equation*}
We denote the space average on the torus $ \mathbb{T}^d$ by
\begin{equation*}
\label{def:A}
\fint dx\, g(t,x,v) := \frac{1}{(2\pi)^d} \int_{\mathbb{T}^d} dx\, 
g(t,x,v).
\end{equation*}
The one-parameter family of functions $\{g^\varepsilon\}_{\varepsilon> 0}$,
which we call sequences (respectively, subsequences) 
by abuse of language, must be understood as
generalized sequences (respectively, subsequences) such as nets (respectively, subnets)
in the sense of Moore--Smith or filters (respectively, finer filters) in the sense of Cartan
(for more details see, e.g., references \cite{NB89, Wil70}).
We also use the notation $\overline{g^\varepsilon}$ to denote the cluster point,
at least in the sense of distributions, of a family of functions
$\{g^\varepsilon\}_{\varepsilon> 0}$. We next define the free-flow
operator by
\begin{equation*}
  \label{def:ffo}
  \mathcal{L}:=v\cdot \nabla_x.
\end{equation*}
We note $t\mapsto S_t^\varepsilon$ the group on $L^p(Q)$,
$1\leq p\leq \infty$, generated by the free-flow operator
$\varepsilon^{-2}\mathcal{L}$.  Then, an explicit formula for the
group $t\mapsto S_t^\varepsilon$ is given by
\begin{equation}
  \label{def:Set}
  (S_t^\varepsilon g)(x,v)= \exp\Big(-\frac{t}{\varepsilon^2}
  \mathcal{L}\Big)g(x,v)=g(x-vt/\varepsilon^2,v), \quad \forall g\in
  L^p(Q).
\end{equation}
Eventually, the symbol $| \cdot |$ denotes either the modulus or the Euclidean norm
depending on whether we deal with complex/real scalars or vectors.       

\section{The self-consistent deterministic case}
\label{s:SCDC}
In this section we deal with the self-consistent deterministic case.

\subsection{Ergodic theorem}
We observe two orthogonal behaviors for the asymptotic limit of the
Vlasov-Poisson system \eqref{eqn:V2}-\eqref{eqn:GL2}, depending on
whether one makes or not the hypothesis of time compactness. The
cornerstone of such observations is the ergodic property of the
free-flow operator $\mathcal{L}$ on the torus,  we recall in
\begin{lemma}{(Ergodicity of the free flow on the torus)}
  \label{lem:EFF}
  Let $g\in L^p(Q)$, with $1\leq p \leq \infty$, satisfy
  \[
  \mathcal{L} g =0, \quad in \ \ \mathcal{D}'(Q).
  \] 
  Then $g=\fint dx\, g$ ($g$ is independent of the variable $x$).  
\end{lemma}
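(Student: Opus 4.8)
The plan is to pass to a Fourier series in the torus variable $x$ and reduce the transport constraint to an algebraic one, mode by mode. Since $\mathbb{T}^d$ has finite measure we have $L^p(\mathbb{T}^d)\hookrightarrow L^1(\mathbb{T}^d)$ for every $p\in[1,\infty]$, so by Fubini the slice $g(\cdot,v)$ lies in $L^1(\mathbb{T}^d)$ for almost every $v$, and its Fourier coefficients
\[
\hat{g}_k(v):=\fint dx\, g(x,v)\,{\rm e}^{-{\rm i}k\cdot x},\qquad k\in\Z^d,
\]
are well defined for a.e. $v$ and satisfy $\|\hat{g}_k\|_{L^p(\R^d)}\leq\|g\|_{L^p(Q)}$ by Jensen/Minkowski. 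The goal is to show $\hat{g}_k\equiv 0$ for every $k\neq 0$, which by uniqueness of Fourier coefficients forces $g(x,v)=\hat{g}_0(v)=\fint dx\,g(x,v)$, independent of $x$.

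First I would test the distributional identity $\mathcal{L}g=0$ against the admissible test functions $\varphi(x,v)={\rm e}^{{\rm i}k\cdot x}\psi(v)$ with $\psi\in\mathcal{D}(\R^d)$; these belong to $\mathcal{D}(Q)$ because ${\rm e}^{{\rm i}k\cdot x}$ is smooth on the torus and $\psi$ is compactly supported. Using that $\mathcal{L}=v\cdot\nabla_x$ is formally skew-adjoint for the scalar product \eqref{def:L2SP} (integrate by parts in $x$ only, $v$ being a real multiplier), the constraint $(\mathcal{L}g,\varphi)=-(g,\mathcal{L}\varphi)=0$ becomes, after computing $\mathcal{L}\varphi={\rm i}(k\cdot v){\rm e}^{{\rm i}k\cdot x}\psi(v)$ and integrating in $x$,
\[
\int_{\R^d} dv\,(k\cdot v)\,\hat{g}_k(v)\,\psi^\ast(v)=0\qquad\text{for all }\psi\in\mathcal{D}(\R^d).
\]
Since $(k\cdot v)\hat{g}_k(v)\in L^1_{\rm loc}(\R^d)$, the fundamental lemma of the calculus of variations yields $(k\cdot v)\hat{g}_k(v)=0$ for a.e. $v\in\R^d$, for each fixed $k$.

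The crux — and this is where the ergodicity of the free flow enters — is the observation that for $k\neq 0$ the resonance set $\{v\in\R^d:\,k\cdot v=0\}$ is a hyperplane, hence of zero Lebesgue measure, so that $k\cdot v\neq 0$ for a.e. $v$; the pointwise identity above then forces $\hat{g}_k(v)=0$ for a.e. $v$. As this holds for every $k\neq 0$, only the zeroth mode survives and $g=\hat{g}_0=\fint dx\,g$. I expect the only genuine subtlety to be the justification that vanishing of all nonzero Fourier coefficients of the $L^1(\mathbb{T}^d)$-slice $g(\cdot,v)$ implies $g(\cdot,v)$ is a.e. constant in $x$, which is the uniqueness theorem for Fourier series on $\mathbb{T}^d$; the remaining steps are bookkeeping, and the endpoint $p=\infty$ is handled uniformly because the argument is run entirely in the distributional pairing rather than through mode-by-mode convergence of the series. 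An equivalent route would invoke Weyl equidistribution or the Birkhoff ergodic theorem for the translation flow $x\mapsto x+tv$ on $\mathbb{T}^d$ at rationally independent $v$, but the Fourier argument localizes the obstruction to a single null hyperplane per mode and is therefore cleaner.
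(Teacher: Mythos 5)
Your proof is correct, and it takes a genuinely different route from the paper's. The paper first integrates the equation along characteristics, deducing the invariance $g(x,v)=g(x-v\tau,v)$ for a.e.\ $(x,v)$ and all $\tau\geq 0$, then Fourier-transforms this identity in $x$ to obtain $\bigl(1-\exp(\mathrm{i}\,k\cdot v\,\tau)\bigr)\hat g(k,v)=0$, and concludes by noting that the set $\{(k,v):\,k\cdot v\,\tau\in 2\pi\Z\ \forall\tau\}$ is Lebesgue-null in $v$ for each $k\neq 0$. You skip the characteristics entirely: testing the weak formulation against the separated modes $\mathrm{e}^{\mathrm{i}k\cdot x}\psi(v)$ converts $\mathcal{L}g=0$ directly into the multiplier identity $(k\cdot v)\,\hat g_k(v)=0$ a.e., and your obstruction set is the null hyperplane $\{k\cdot v=0\}$ --- which is in fact the same set as the paper's, since $k\cdot v\,\tau\in2\pi\Z$ for \emph{all} $\tau$ forces $k\cdot v=0$. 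What your route buys: it stays entirely within the distributional pairing, so you never need to justify that a merely-$L^p$ distributional solution of the stationary transport equation is constant along trajectories --- a step the paper asserts by computing $dg/d\tau$ along characteristics, which for $g\in L^p$ strictly requires an implicit mollification-in-$x$ argument; your version also treats $1\leq p\leq\infty$ uniformly, as you note. What the paper's route buys: the invariance identity $g(x,v)=g(x-v\tau,v)$ makes the ergodicity-of-the-free-flow interpretation explicit, matches the group $S_t^\varepsilon$ used throughout the paper, and is the form relevant to the comparison with Caglioti--Maffei made in the remark that follows the lemma. Your two flagged finishing steps --- passing to a common null set in $v$ over the countably many $k\in\Z^d\setminus\{0\}$, and invoking uniqueness of Fourier coefficients for $L^1(\mathbb{T}^d)$ slices --- are exactly the right bookkeeping, and both are standard.
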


\begin{proof}
  We first integrate the free-flow operator by using characteristic
  curves, and second we use the spatial Fourier transform of the
  obtained solution. Indeed, the characteristic curves
  $(X(\tau),V(\tau))$ of the free flow satisfy the ODEs
  $\dot{X}(\tau)=V(\tau)$, $\dot{V}(\tau)=0$, with initial conditions
  $X(0)=x_0$, and $V(0)=v_0$. Its   solution is  given by
  $(X(\tau)=x_0+v_0\tau, \, V(\tau)=v_0)$. Since
  $(dg/d\tau)(X(\tau),V(\tau))= (v\cdot\nabla_{x}
  g)(X(\tau),V(\tau))=0$, we obtain $g(X(\tau),V(\tau))=g(X(0),V(0))$
  or $g(x,v)=g(x-v\tau,v)$, for a.e. $(x,v)\in Q$ and all $\tau\in\R^+$.
  The Fourier transform in space of this last equation gives
  \begin{equation*}
    \label{eqn:lemEFF:1}
    \big(1-\exp({\rm i} k\cdot v \tau  )\big) \hat{g}(k,v)=0, \quad
    \forall k\in \Z^d, \ v\in \R^d, \ \tau \in \R.
  \end{equation*}
  This relation and $g\in L^p(Q)$, with $1\leq p \leq \infty$  ($g$ cannot be a Dirac mass in velocity),
  imply that the support of $\hat g$ is contained in
  the set ${\rm supp}(\hat g):=\{(k,v)\in\Z^d \times \R^d \ |\ k\cdot
  v \tau \in 2\pi\Z, \ \forall \tau\in\R\}$. For any $\delta, \, T, \, r, \, R > 0$, such that $ \delta <T$,
  and $r<R$, the Lebesgue measure of the set ${\rm supp}(\hat g)$ for $\delta < \tau < T$
  and $ r <|v| <R$ is zero.  This forces $\hat g$ to
  be equal to zero for all $k\neq 0$ and then ${\rm supp}(\hat
  g):=\{(k,v) \ | \ k=0, \ \  v\in\R^d \}$. Therefore $g=\fint dx\,g$
  and $g$ is independent of the variable $x$. This ends the proof of
  Lemma~\ref{lem:EFF}. 
\end{proof}

\begin{remark}
  The Proof of Lemma~\ref{lem:EFF} is reminiscent of the Proof of Theorem 2.1 in
  \cite{CM98} but it is not exactly the same. Indeed, here we do not
  use the Riemann--Lebesgue lemma (in the Fourier dual variable of
  $v$) whereas \cite{CM98} does. More precisely the proof of Theorem 2.1 in
  \cite{CM98}, which is based  on the weak formulation of the equation
  $g(x,v)=g(x-v\tau,v)$ against continuous compactly supported test
  functions, uses first  the Fourier transform in the phase space $Q$
  to switch from real variables to Fourier variables in the weak
  formulation,  and it uses second the Riemann--Lebesgue lemma
  together with the  Lebesgue dominated convergence theorem to pass to
  the limit.
\end{remark}
  
Based on the ergodicity of the free flow, we state 

\begin{theorem}
  \label{th:SCDC}
Let $\{f_0^\varepsilon\}_{\varepsilon>0}$ be a sequence of non-negative
initial data  and $C_0$ be a positive constant such that
\begin{eqnarray*}
&&\|f_0^\varepsilon \|_{L^1(Q)} +\|f_0^\varepsilon \|_{L^\infty(Q)}  \leq C_0,  \quad \int_{Q}
f_0^\varepsilon |v|^2\,dxdv \leq C_0,  \quad \mbox{ and } \quad\\
&& \bigg\|E_0^\varepsilon:=\nabla \Delta^{-1} \bigg(\int_{\R^d}
f_0^\varepsilon dv-1\bigg) \bigg\|_{L^2(\mathbb{T}^d)} \leq C_0.
\end{eqnarray*}
Let $(f^\varepsilon,E^\varepsilon)_{\varepsilon >0}$, be a sequence of
weak solutions of the Vlasov--Poisson system
\eqref{eqn:V2}-\eqref{eqn:GL2}, with  initial data
${f^\varepsilon}_{|_{t=0}}=f_0^\varepsilon$, the existence of which
has been proved in \cite{Ars75, DL88a, DL88b, BGP00} for all
$\varepsilon >0$.  Then,
\begin{itemize}
\item [$i)$] There exists a function $f=f(t,v)$, independent of the
  variable $x$, such that $f\in L^\infty(\R^+; L^1\cap
  L^\infty(\R^d))$, and  up to subsequences one has,
  \begin{eqnarray*}
    f^\varepsilon &\rightharpoonup& f \ \mbox{ in
    }\  L^\infty(\R^+;L^\infty(Q)) \ \ \mbox{weak}\!-\!\ast, \\ \fint
    dx\, f^\varepsilon &\rightharpoonup& f  \ \mbox{ in }
    \ L^\infty(\R^+;L^\infty(\R^d)) \ \ \mbox{weak}\!-\!\ast.  
  \end{eqnarray*}
\item[$ii)$] The electric field $E^\varepsilon$ converges weakly to
  zero  as $\varepsilon \rightarrow 0$, more precisely, 
  \begin{equation*}
    E^\varepsilon \rightharpoonup 0 \ \mbox{ in }
    \ L^\infty(\R^+;W^{1,1+2/d}(\mathbb{T}^d))
    \ \ \mbox{weak}\!-\!\ast.
  \end{equation*}
\item[$iii)$] The expression
  \[
  \nabla_v \cdot \fint dx\, \frac{E^\varepsilon
    f^\varepsilon}{\varepsilon}, 
  \]
  is uniformly (with respect to $\varepsilon$) bounded in
  $\mathcal{D}'(\R^+\times \R^d)$; hence, up to a subsequence, it
  converges in  $\mathcal{D}'(\R^+\times \R^d)$ and we obtain
  \begin{eqnarray}
    &&\partial_t f + \nabla_v \cdot \overline {\fint dx\,
      \frac{E^\varepsilon f^\varepsilon}{\varepsilon}} = 0, \quad
    \mbox{ in } \  \mathcal{D}'(\R^+\times \R^d),
    \label{preDiffEq_1}\\
    && {f}_{|_{t=0}} = \fint dx\, f_0. \label{preDiffEq_2}
  \end{eqnarray}
\item[$iv)$] Let $d\leq 4$. Moreover, if we suppose that there exists
  a constant $\kappa$, independent of $\varepsilon$ such that
  \begin{equation}
    \label{LCIT}
    \| E^\varepsilon \|_{W_{\rm loc}^s(\R^+;L^1(\mathbb{T}^d))} \leq
    \kappa, \  \mbox{ with } s>0, \quad \mbox{ and } \quad \  \|
    \partial_t \Phi^\varepsilon \|_{L_{\rm
        loc}^1(\R^+;L^1(\mathbb{T}^d))} \leq \kappa,
  \end{equation}
  then, 
  \begin{eqnarray}
    \fint dx \,\frac{E^\varepsilon f^\varepsilon}{\varepsilon}
    &\rightharpoonup& 0
    \ \mbox{ in } \ \mathcal{D}'([0,T]\times \R^d),
    \label{wlofEFem1}\\ E^\varepsilon &\rightarrow & 0 \ \mbox{
      in } \ L^1([0,T]\times \mathbb{T}^d) \ \ \mbox{strong},
    \nonumber
  \end{eqnarray}
  as $\varepsilon \rightarrow 0$, and equations \eqref{preDiffEq_1}-\eqref{preDiffEq_2} degenerate
   into the following equations, 
   \begin{eqnarray}
     &&\partial_t f  = 0
     \quad \mbox{ in } \  \mathcal{D}'([0,T]\times\R^d), \label{eqn:trivial} \\
     && {f}_{|_{t=0}} = \fint dx\,
     f_0. \label{eqn:cite}
   \end{eqnarray}  
\end{itemize}
\end{theorem}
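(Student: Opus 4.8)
The plan is to run everything off $\varepsilon$-uniform a priori bounds and then pass to the weak/distributional limit. First I would record the conserved quantities of \eqref{eqn:V2}-\eqref{eqn:GL2}: the characteristic flow is measure preserving, so the $L^1$ and $L^\infty$ norms of $f^\varepsilon$ are propagated and $f^\varepsilon$ is bounded in $L^\infty(\R^+;L^1\cap L^\infty(Q))$ uniformly in $\varepsilon$. Testing the equation against $|v|^2/2$ and using the Poisson coupling \eqref{eqn:GL2} produces the conserved energy $\tfrac12\int_Q f^\varepsilon|v|^2 + \tfrac\varepsilon2\int_{\mathbb T^d}|E^\varepsilon|^2$, whence a uniform bound on $\int_Q f^\varepsilon|v|^2$ (using $\varepsilon<1$ and the data bounds). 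A standard velocity-moment interpolation (splitting $\int f^\varepsilon\,dv$ at $|v|\le R$ and optimizing in $R$) then gives $\rho^\varepsilon:=\int_{\R^d} f^\varepsilon\,dv$ bounded in $L^\infty(\R^+;L^{1+2/d}(\mathbb T^d))$, and elliptic regularity on \eqref{eqn:GL2} promotes this to $\|E^\varepsilon\|_{L^\infty(\R^+;W^{1,1+2/d}(\mathbb T^d))}\le C$, the bound announced in $ii)$.

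For $i)$ I would multiply \eqref{eqn:V2} by $\varepsilon^2$, rewriting it as $\varepsilon^2\partial_t f^\varepsilon + \mathcal L f^\varepsilon + \varepsilon\nabla_v\cdot(E^\varepsilon f^\varepsilon)=0$ (using $\nabla_v\cdot E^\varepsilon=0$). Extracting a weak-$\ast$ limit $f^\varepsilon\rightharpoonup f$ from the $L^\infty(L^\infty)$ bound, the two outer terms vanish in $\mathcal D'(\R^+\times Q)$ — the last because $E^\varepsilon$ is bounded in $L^{1+2/d}$ and $f^\varepsilon$ in $L^\infty$ so $\varepsilon E^\varepsilon f^\varepsilon\to 0$ — while the middle term converges to $\mathcal L f$. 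Hence $\mathcal L f=0$, and Lemma~\ref{lem:EFF} forces $f=\fint dx\,f$ to be $x$-independent; averaging the weak limit over $\mathbb T^d$ gives the second convergence. For $ii)$ I would pass to the limit in \eqref{eqn:GL2}: since $f$ is $x$-independent, the limit charge density $\int f\,dv$ depends on $t$ alone, and global neutrality \eqref{GNP} forces it to equal $1$; thus $\mathrm{div}\,E=\rho-1=0$ while $\mathrm{curl}\,E=0$ and $\fint E\,dx=0$, so the weak-$\ast$ limit $E$ of $E^\varepsilon$ is $0$. Part $iii)$ then comes from averaging \eqref{eqn:V2} over $\mathbb T^d$: the transport term $\fint v\cdot\nabla_x f^\varepsilon\,dx$ vanishes identically by periodicity, leaving the exact identity $\partial_t\fint f^\varepsilon dx + \nabla_v\cdot\fint \varepsilon^{-1}E^\varepsilon f^\varepsilon\,dx=0$. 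Since $\fint f^\varepsilon dx$ is bounded in $L^\infty(L^1\cap L^\infty)$, its time derivative, hence $\nabla_v\cdot\fint\varepsilon^{-1}E^\varepsilon f^\varepsilon\,dx$, is bounded in $\mathcal D'(\R^+\times\R^d)$; passing to the limit along the subsequence of $i)$ yields \eqref{preDiffEq_1}-\eqref{preDiffEq_2}, the cluster point $\overline{\fint\varepsilon^{-1}E^\varepsilon f^\varepsilon}$ being understood as the distributional limit of the flux divergence.

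The real work is $iv)$. The role of $d\le 4$ is to make the Sobolev embedding $W^{1,1+2/d}(\mathbb T^d)\hookrightarrow L^2(\mathbb T^d)$ hold (the exponent condition reduces to $d^2-4d-4\le 0$, i.e. $d\le 2+2\sqrt2$), and in fact compactly; thus the uniform $W^{1,1+2/d}$ bound upgrades to a uniform $L^\infty(\R^+;L^2)$ bound together with a compact spatial embedding into $L^2$. Feeding this and the time-regularity $\|E^\varepsilon\|_{W^s_{\mathrm{loc}}(L^1)}\le\kappa$ into the Aubin--Lions--Simon lemma yields relative compactness of $E^\varepsilon$ in $L^2_{\mathrm{loc}}([0,T]\times\mathbb T^d)$; since by $ii)$ its only weak limit is $0$, I get $E^\varepsilon\to 0$ strongly in $L^2$, a fortiori in $L^1$. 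It then remains to prove $\fint\varepsilon^{-1}E^\varepsilon f^\varepsilon\,dx\rightharpoonup 0$, which degenerates \eqref{preDiffEq_1} into \eqref{eqn:trivial}-\eqref{eqn:cite}. Strong convergence alone does not suffice, because of the $1/\varepsilon$ prefactor. The plan is to pass to the fast frame $g^\varepsilon:=S^\varepsilon_{-t}f^\varepsilon$, so that in the spatial Fourier variable the tested flux becomes a sum of oscillatory time integrals of the form $\varepsilon^{-1}\!\int dt\,\widehat{E^\varepsilon}_k(t)\,\widehat{g^\varepsilon}_{-k}(t,v)\,e^{\mathrm i k\cdot v\, t/\varepsilon^2}$; integrating by parts in $t$ and invoking the time-regularity \eqref{LCIT} (in particular the $\partial_t\Phi^\varepsilon$ bound, which controls $\partial_t E^\varepsilon=-\nabla\partial_t\Phi^\varepsilon$) gains factors $\varepsilon^2/(k\cdot v)$ that defeat the $1/\varepsilon$ away from resonance, while the near-resonant set $\{\,|k\cdot v|\lesssim\varepsilon^2\,\}$ has $v$-measure tending to $0$, and the residual term, being quadratic in $E^\varepsilon$ (a Reynolds-stress contribution $\propto E^\varepsilon\otimes E^\varepsilon$), is killed by the strong $L^2$ convergence of $E^\varepsilon$.

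I expect this oscillatory/resonance estimate to be the main obstacle. Reconciling the $1/\varepsilon$ weight with the small divisors $k\cdot v$ that arise when one effectively inverts the fast transport $v\cdot\nabla_x$ is precisely the point where the two hypotheses $d\le 4$ and \eqref{LCIT} must be used in tandem: the former supplies the $L^2$ field bound that tames the quadratic remainder, the latter supplies the time smoothness needed to integrate by parts and gain powers of $\varepsilon^2$. This is the one step for which no soft compactness argument is available, and it is exactly the mechanism by which time compactness of $E^\varepsilon$ collapses the limit to the trivial equation $\partial_t f=0$.
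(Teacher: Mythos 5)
Your parts $i)$--$iii)$, and the Aubin--Lions argument giving $E^\varepsilon\to 0$ strongly, follow the paper's route (with a harmless variant: you run Simon's lemma through the compact embedding $W^{1,1+2/d}(\mathbb{T}^d)\hookrightarrow L^2(\mathbb{T}^d)$ for $d\le 4$, the paper through $L^{1+2/d}$). But the decisive step of $iv)$ --- showing $\fint dx\,E^\varepsilon f^\varepsilon/\varepsilon\rightharpoonup 0$ --- is in your write-up only a program, and the program as sketched cannot be closed. First, the time integration by parts in $\varepsilon^{-1}\!\int dt\,\widehat{E^\varepsilon}_k\,\widehat{g^\varepsilon}_{-k}\,e^{{\rm i}k\cdot v\,t/\varepsilon^2}$ requires a bound on $\partial_t \widehat{g^\varepsilon}_{-k}$, and for the filtered unknown $g^\varepsilon=S^\varepsilon_{-t}f^\varepsilon$ one has $\partial_t g^\varepsilon=-\varepsilon^{-1}S^\varepsilon_{-t}\big(E^\varepsilon\cdot\nabla_v f^\varepsilon\big)$: weak solutions carry no bound on $\nabla_v f^\varepsilon$, and commuting $\nabla_v$ through $S^\varepsilon_{-t}$ costs a factor $t/\varepsilon^2$, so each integration by parts regenerates a term of the same size --- this is precisely the formal Duhamel iteration of Section~\ref{ss:GIA}, which the paper explicitly states it cannot justify in the deterministic setting. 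Second, \eqref{LCIT} gives only $W^{s}_{\rm loc}$ regularity in time with $s>0$ (possibly $s<1$) valued in $L^1(\mathbb{T}^d)$, and the bound on $\partial_t\Phi^\varepsilon$ controls $\partial_t E^\varepsilon=-\nabla\partial_t\Phi^\varepsilon$ only as a distributional spatial gradient of an $L^1$ function; neither licenses your integration by parts. Third, the ``residual quadratic term'' produced by one Duhamel iteration carries a weight $\varepsilon^{-2}$, and the strong convergence $E^\varepsilon\to 0$ furnished by Aubin--Lions is purely qualitative (rate-free), so it cannot absorb any inverse power of $\varepsilon$.

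The idea your proposal is missing is the paper's multiplier trick, which makes the whole step soft: multiply the Vlasov equation \eqref{eqn:V2} by $\varepsilon\Phi^\varepsilon$, average in $x$, and test against $\varphi(t,v)$. The $\varepsilon^{-2}$ transport term, after an $x$-integration by parts and $v\cdot\nabla_x\Phi^\varepsilon=-v\cdot E^\varepsilon$, produces exactly $-\int dt\,dv\,\varphi\,v\cdot\fint dx\,E^\varepsilon f^\varepsilon/\varepsilon$; the time-derivative term is $O(\varepsilon)$; and the nonlinear term becomes the $\varepsilon$-free cubic $\int f^\varepsilon\,\Phi^\varepsilon E^\varepsilon\cdot\nabla_v\varphi$, which vanishes in the limit because $\Phi^\varepsilon E^\varepsilon\to 0$ strongly in $L^1([0,T]\times\mathbb{T}^d)$ while $f^\varepsilon$ converges only weak--$\ast$. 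This is where \emph{both} halves of \eqref{LCIT} and the restriction $d\le 4$ are actually consumed: compactness of $E^\varepsilon$ in $L^{1+2/d}$ from the $W^s$ bound, and compactness of $\Phi^\varepsilon$ in $L^{1+d/2}$ from $\partial_t\Phi^\varepsilon\in L^1_{\rm loc}(L^1)$ together with the embedding $W^{2,1+2/d}(\mathbb{T}^d)\hookrightarrow W^{\delta,1+d/2}(\mathbb{T}^d)$, valid only for $d\le 4$ --- note your plan never uses the $\Phi^\varepsilon$ compactness at all. One concludes $v\cdot\fint dx\,E^\varepsilon f^\varepsilon/\varepsilon\rightharpoonup 0$, passes to the limit in \eqref{AVVE} for test functions with $0_{\R^d_v}\notin{\rm supp}\,\varphi$, and removes this restriction by density since $f\in L^\infty$, yielding $\partial_t f=0$. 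So, contrary to your closing claim, a soft compactness argument \emph{is} available, and no oscillatory/small-divisor analysis is required.
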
  

\begin{proof}
  Since $\|f_0^\varepsilon \|_{L^1(Q)} + \|f_0^\varepsilon \|_{L^\infty(Q)}\leq  C_0 <\infty $, by weak
  compactness arguments there exists a function $f_0\in L^1\cap L^\infty(Q)$
  such that $f_0^\varepsilon$ (up to a subsequence) converges in
  $L^\infty(Q)$ weak--$\ast$ to $f_0$. Indeed from
  $\| f_0^\varepsilon\|_{L^1(Q)} + \| f_0^\varepsilon\|_{L^\infty(Q)} \leq C_0 <\infty$,
  and using standard weak compactness theorems, we obtain that
  there exists $f_0\in \mathcal{M}_b\cap L^\infty(Q)$ such that $f_0^\varepsilon \rightharpoonup f_0$
  in  $\mathcal{M}_b\cap L^\infty(Q)$ weak--$\ast$. Here, $\mathcal{M}_b(Q)$ is the set of bounded measures on $Q$. Moreover
  $\| f_0^\varepsilon\|_{L^1(Q)} + \| f_0^\varepsilon\|_{L^\infty(Q)} \leq C_0 <\infty$ implies
  that   $\| f_0^\varepsilon\|_{L^p(Q)}  \leq C_0 <\infty$,  for $1 \leq p \leq \infty$.
  Therefore we have also $f_0^\varepsilon \rightharpoonup f_0$  in  $L^p(Q)$ weak--$\ast$, 
  for $1 < p \leq \infty$. This and the De La Vall\'e--Poussin theorem on the criterion for uniform equi-integrability
  implies that the family $\{f_0^\varepsilon\}_{\varepsilon>0}$
  is uniformly equi-integrable. Finally, using Dunford--Pettis theorem, uniform equi-integrability implies that we also
  have  $f_0^\varepsilon \rightharpoonup f_0$  in  $L^1(Q)$ weak. Therefore,
  $f_0\in L^1\cap L^\infty(Q)$.
  
  From the standard theory of
  existence of weak solutions for the Vlasov--Poisson system
  \cite{Ars75, DL88a, DL88b, BGP00}, we obtain for all $\varepsilon
  >0$,
  \begin{equation}
    \label{ABF}
    \| f^\varepsilon(t)\|_{L^p(Q)} \leq \| f_0^\varepsilon\|_{L^p(Q)}
    \leq  C_0 <\infty,  \quad 1\leq p \leq \infty.
  \end{equation}
  Then, by weak compactness arguments there exists a function $f\in
  L^\infty(\R^+; L^1\cap L^\infty(Q))$, such that  $f^\varepsilon$
  (up to a subsequence) converges in $L^\infty(\R^+;L^\infty(Q))$
  weak--$\ast$ to $f$, and $\fint dx\, f^\varepsilon$  (up to a
  subsequence) converges in $L^\infty(\R^+;L^\infty(\R^d))$
  weak--$\ast$ to $\fint dx\, f$.  From properties of weak solutions
  for the Vlasov--Poisson system, weak solutions of
  \eqref{eqn:V2}-\eqref{eqn:GL2} satisfy the following a priori bound:
  \begin{equation}
    \label{tecl}
   \mathcal{E}(t) \leq  \mathcal{E}(0), \quad \forall t\geq 0, 
  \end{equation}
  where the total energy $\mathcal{E}(t)$ is defined by
  \begin{equation*}
    \label{tecl_2}
   \mathcal{E}(t):=\frac12\int_{Q} dxdv\,|v|^2f^\varepsilon(t,x,v)  +
   \frac{\varepsilon}{2}\int_{\mathbb{T}^d} dx\, |E^\varepsilon(t,x)|^2. 
  \end{equation*}
  From \eqref{tecl} and the initial data assumptions of
  Theorem~\ref{th:SCDC}, we infer that there exists a constant $K_0$,
  depending on $C_0$ such that
  \begin{equation}
    \label{eqn:keef}
    \int_{Q} dxdv\, |v|^2f^\varepsilon(t,x,v)  \leq K_0, \quad \mbox{ and
    } \quad \|E^\varepsilon\|_{L^\infty(\R^+;L^2(\mathbb{T}^d))} \leq
    \frac{K_0 }{\sqrt{\varepsilon}}.
  \end{equation}
  Taking $\varphi \in \mathcal{D}(\R^+\times Q)$ as a test function
  and using the $L^2$-scalar product \eqref{def:L2SP}, the weak
  formulation of Vlasov equation \eqref{eqn:V2} reads
  \begin{equation}
   \label{wfV2}
    \varepsilon^2\langle f^\varepsilon, \partial_t \varphi\rangle +
    \langle f^\varepsilon, v\cdot \nabla_x \varphi\rangle +
    \varepsilon\langle f^\varepsilon, E^\varepsilon \cdot \nabla_v
    \varphi\rangle=0.
  \end{equation}
  Using the $L^\infty$-bound for $f^\varepsilon$,  and a priori
  estimate \eqref{eqn:keef} for the electric field $E^\varepsilon$, we
  obtain
  \begin{equation}
    \label{eqnT1}
   |\varepsilon^2\langle f^\varepsilon, \partial_t \varphi\rangle|
   \leq \varepsilon^2 (2\pi)^{-d}\|\partial_t\varphi\|_{L^1(\R^+\times
     Q)}\|f^\varepsilon\|_{L^\infty(\R^+\times Q)} \leq
   C\varepsilon^2,
  \end{equation}
  and
  \begin{eqnarray}
   \varepsilon|\langle f^\varepsilon, E^\varepsilon \cdot \nabla_v
   \varphi\rangle| & \leq& \varepsilon
   \|E^\varepsilon\|_{L^\infty(\R^+;L^2( \mathbb{T}^d))}
   \|f^\varepsilon\|_{L^\infty(\R^+; L^2( Q))}
   \|\nabla_v\varphi\|_{L^1(\R^+; L^2(\R^d; L^\infty(\mathbb{T}^d))}
   \nonumber \\ &\leq& C\sqrt{ \varepsilon}. \label{eqnT3}
  \end{eqnarray}
  Using \eqref{eqnT1}-\eqref{eqnT3} to pass to the limit $\varepsilon
  \rightarrow 0$ in \eqref{wfV2}, we obtain
  \begin{equation*}
    v\cdot \nabla_x f = 0 \quad \mbox{ in }\  \mathcal{D}'(\R^+\times
    Q).  \label{eqnT4}
    \end{equation*} 
  From Lemma~\ref{lem:EFF}, we infer that $f$ is independent of $x$
  and  $\fint dx\, f(t)=f(t)$, for a.e. $t>0$. This proves point
  $i)$.  For proving point $ii)$,  we first define the charge
  density $\rho^\varepsilon$ by
  \[
  \rho^\varepsilon = \int_{\R^d} dv\, f^\varepsilon.
  \]
  From a standard interpolation inequality (see, e.g., \cite{DL88a,
    DL88b, BGP00}), there exists a constant $C_d$ depending on $d$
  such that
  \begin{equation}
    \label{eqn:II1}
    \| \rho^\varepsilon\|_{L^\infty(\R^+;L^{1+2/d}(\mathbb{T}^d))}
    \leq C_d \| f^\varepsilon\|_{L^\infty(\R^+\times Q)}^{2/(2+d)}
    \left\|  f^\varepsilon
    |v|^2\right\|_{L^\infty(\R^+;L^1(Q))}^{d/(d+2)}\leq \kappa_0<
    \infty,
  \end{equation}
  where the constant $\kappa_0$ depends on $C_0$, but is independent
  of $\varepsilon$. From the Poisson equation \eqref{eqn:GL2}, the bound
  \eqref{eqn:II1} on the charge density $\rho^\varepsilon$ and
  standard elliptic regularity estimates, we obtain
  \begin{equation}
    \label{eqn:II2}
    \| E^\varepsilon\|_{L^\infty(\R^+;W^{1,1+2/d}(\mathbb{T}^d))} \leq
    c_0 <\infty, 
  \end{equation}
  where the constant $c_0$ depends on initial data but is independent
  of $\varepsilon$.  Then,  by weak compactness there exists a
  function $E\in L^\infty(\R^+;W^{1,1+2/d}(\mathbb{T}^d))$ such that
  $E^\varepsilon$ (up to a subsequence) converges in $
  L^\infty(\R^+;W^{1,1+2/d}(\mathbb{T}^d))$ weak--$\ast$ to $E$. To
  determine the limit point $E$, we use  the Poisson equation
  \eqref{eqn:GL2}. Observing that
  \[
  \int_{\R^d}dv\,  f =1,
  \]
  and passing to the limit $\varepsilon\rightarrow 0$ in the Poisson
  equation \eqref{eqn:GL2}, we obtain
  \[
  \Delta \Phi =0, \ \ \mbox{ in } \ \mathcal{D}'(\R^+\times
  \mathbb{T}^d),
  \]
  which leads to $\Phi=0$ and $E=0$ in $\mathcal{D}'(\R^+\times
  \mathbb{T}^d)$. This ends the proof of point $ii)$.  For 
  point $iii)$, using the $L^2$-scalar product \eqref{def:L2SP}, we
  first write the following weak formulation of the Vlasov equation
  \eqref{eqn:V2} being previously averaged in space,
    \begin{equation}
      \label{AVVE}
      \langle  f^\varepsilon, \partial_t \varphi \rangle +
      \left\langle \frac{E^\varepsilon f^\varepsilon}{\varepsilon},
      \nabla_v\varphi \right\rangle = 0, \quad \forall \varphi \in
      \mathcal{D}(\R^+\times \R^d).
  \end{equation}
    Using a priori estimates \eqref{ABF} or point $i)$ of
    Theorem~\ref{th:SCDC}, we obtain from \eqref{AVVE},
  \begin{equation*}
    \left|  \int_{\R^+} dt \int_{\R^d} dv\,    \varphi \nabla_v\cdot
    \fint dx\, \frac{E^\varepsilon f^\varepsilon}{\varepsilon} \right|
    \leq \|f^\varepsilon\|_{L^\infty([0,T]\times Q)}
    \|\partial_t \varphi \|_{L^1(\R^+\times \R^d)} \leq C <\infty,
  \end{equation*} 
  where $C$ is independent of $\varepsilon$. This implies that
  $\nabla_v\cdot \fint dx\, E^\varepsilon f^\varepsilon/\varepsilon $
  (up to a subsequence) converges in $\mathcal{D}'(\R^+\times
  \R^d)$. Then, using   point $i)$ of Theorem~\ref{th:SCDC}, we can
  pass to the limit $\varepsilon \rightarrow 0$ in \eqref{AVVE} to
  obtain   equation \eqref{preDiffEq_1}.    For proving point
  $iv)$,  we start by establishing some strong convergence properties
  for the sequences $E^\varepsilon$ and $\Phi^\varepsilon$. Using
  $\| E^\varepsilon \|_{W_{\rm loc}^s(\R^+;L^1(\mathbb{T}^d))} \leq
  \kappa$, with $s>0$ (assumption \eqref{LCIT} of
  Theorem~\ref{th:SCDC}), and $E^\varepsilon\in
  L^\infty(\R^+;W^{1,1+2/d}(\mathbb{T}^d))$, we obtain, from a
  Lions--Aubin theorem \cite{Sim87}, that the sequence
  \begin{equation}
    \label{prop:sce}
    E^\varepsilon \ \mbox{ is compact in } \  L^{1+2/d}([0,T] \times
    \mathbb{T}^d) \ \mbox{ strong}, \quad \forall T>0.
  \end{equation}
  We next deal with the sequence  $\Phi^\varepsilon$.  Using the Poisson
  equation \eqref{eqn:GL2}, the bound \eqref{eqn:II1} on the charge
  density $\rho^\varepsilon$ and standard elliptic regularity
  estimates, we obtain that $\Phi^\varepsilon \in
  L^\infty(\R^+;W^{2,1+2/d}(\mathbb{T}^d))$. Using the Sobolev
  embedding $W^{s,p}(\R^d)\hookrightarrow W^{r,q}(\R^d)$, with
  $\ s>r$, $\ d>(s-r)p\ $ and $\ p\leq q \leq dp/(d-(s-r)p)$, we
  obtain
  \begin{equation}
    \label{prop:ess}
    L^\infty(\R^+;W^{2,1+2/d}(\mathbb{T}^d)) \hookrightarrow
    L^\infty(\R^+;W^{\delta,1+d/2}(\mathbb{T}^d)),
  \end{equation}
  with  $\max\{0,2-d/(1+2/d)\} < \delta <\max\{ 2, (-d^2+4d+4)/(d+2)
  \}$, and $d\leq 4$.  Using the embedding \eqref{prop:ess}, and the bound
  $\| \partial_t \Phi^\varepsilon \|_{L_{\rm
      loc}^1(\R^+;L^1(\mathbb{T}^d))} \leq \kappa$, (assumption
  \eqref{LCIT} of Theorem~\ref{th:SCDC}), we obtain from a
  Lions--Aubin theorem \cite{Sim87}, that the sequence
  \begin{equation}
    \label{prop:scphi}
    \Phi^\varepsilon \ \mbox{ is compact in } \  L^{1+d/2}([0,T]
    \times \mathbb{T}^d) \ \mbox{ strong}, \quad \forall T>0.
  \end{equation}
  Multiplying  the Vlasov equation \eqref{eqn:V2} by
  $\varepsilon\Phi^\varepsilon$, then averaging in space,
  multiplying the result by a test function $\varphi \in
  \mathcal{D}([0,T]\times \R^d)$ and finally integrating with respect
  to the time and  velocity variables, we obtain
  \begin{multline}
    \label{eqn:PhiV}
    \varepsilon \int_0^T dt \int_{\R^d} dv \fint dx \, \varphi
    \Phi^\varepsilon \partial_t f^\varepsilon + \frac{1}{\varepsilon}
    \int_0^T dt  \int_{\R^d} dv \fint dx \, \varphi  \Phi^\varepsilon
    v\cdot\nabla_x f^\varepsilon \\ +  \int_0^T dt  \int_{\R^d} dv
    \fint dx \, \varphi \Phi^\varepsilon \nabla_v\cdot (E^\varepsilon
    f^\varepsilon) =0.
  \end{multline}
  Using integration by parts, we obtain from \eqref{eqn:PhiV},
  \begin{multline}
    \label{eqn:PhiV2}
    \varepsilon \int_0^T dt  \int_{\R^d} dv \fint dx \, f^\varepsilon
    ( \Phi^\varepsilon  \partial_t \varphi + \varphi \partial_t
    \Phi^\varepsilon) - \int_0^T dt  \int_{\R^d} dv\, \varphi v \cdot
      \fint dx \,   \frac{E^\varepsilon f^\varepsilon }{\varepsilon}  \\ +
    \int_0^T dt  \int_{\R^d} dv \fint dx \,  f^\varepsilon
    \Phi^\varepsilon E^\varepsilon\cdot \nabla_v \varphi  =0.
  \end{multline}
  Using the $L^\infty$-bound \eqref{ABF} for $f^\varepsilon$ and
  assumption \eqref{LCIT}, we obtain, for the first term of
  \eqref{eqn:PhiV2},
  \begin{multline}
    \label{eqn:PV2T1}
      \varepsilon\left| \int_0^T dt  \int_{\R^d} dv \fint dx \,
      f^\varepsilon  ( \Phi^\varepsilon  \partial_t \varphi + \varphi
      \partial_t \Phi^\varepsilon) \right| \\ \leq \varepsilon
      (2\pi)^{-d}\|f^\varepsilon\|_{L^\infty([0,T]\times Q)} \left( \|
      \partial_t\varphi \|_{L^\infty(0,T;L^1(\R^d))} +  \| \varphi
      \|_{L^\infty(0,T;L^1(\R^d))} \right) \\\left( \|  \partial_t
      \Phi^\varepsilon \|_{L^\infty(0,T;L^1(\mathbb{T}^d))} +
      \|\Phi^\varepsilon \|_{L^\infty(0,T;L^1(\mathbb{T}^d))}  \right)
      \leq C\varepsilon.
  \end{multline}
  Using \eqref{prop:sce} and \eqref{prop:scphi}, we infer that the
  product $\Phi^\varepsilon E^\varepsilon$ converges strongly in
  $L^1([0,T] \times\mathbb{T}^d)$. Using this strong convergence, the
  weak convergence of $f^\varepsilon$ in $L^\infty([0,T]\times Q)$
  weak--$*$,  and the fact that the limit point of $E^\varepsilon$
  vanishes, we obtain for the third term of \eqref{eqn:PhiV2},
  \begin{equation}
    \label{eqn:PV2T3}
    \left| \int_0^T dt  \int_{\R^d} dv \fint dx \,  f^\varepsilon
    \Phi^\varepsilon E^\varepsilon\cdot \nabla_v \varphi\right |
    \longrightarrow 0, \ \mbox{ as } \ \varepsilon \rightarrow 0.
  \end{equation}
  Using  \eqref{eqn:PhiV2}-\eqref{eqn:PV2T3}, we obtain
  \begin{equation}
    \label{eqn:PV2T2}
   \int_0^T dt \int_{\R^d} dv \,\varphi v\cdot \fint dx \frac{ E^\varepsilon
      f^\varepsilon}{\varepsilon}\longrightarrow 0, \ \mbox{ as } \ \varepsilon
    \rightarrow 0.
  \end{equation}
  Choosing a test function $\varphi$ such that $0_{\R_v^d} \notin {\rm supp} (\varphi)$,
  and passing to the limit $\varepsilon\rightarrow 0$ in \eqref{AVVE},
  we then obtain  from \eqref{eqn:PV2T2},
  \begin{equation}
    \label{eqn:TrivEq}
      \int_0^T dt \int_{\R^d} dv \, f \partial_t \varphi=0. 
  \end{equation}  
  Since $f\in L^\infty([0,T]\times \R^d)$, equation \eqref{eqn:TrivEq} is valid
  for any $\varphi \in \mathcal{D}([0,T]\times\R^d)$; hence we obtain
  \eqref{wlofEFem1}-\eqref{eqn:cite}, which ends the proof.  
\end{proof}  

Few remarks on Theorem~\ref{th:SCDC} are in order.

\begin{remark}{(electric energy)}
  \label{rk:EE}
  A priori estimate \eqref{eqn:keef} for $E^\varepsilon$ is not
  optimal. In fact, we have $E^\varepsilon \in L^\infty(\R^+;
  L^2(\mathbb{T}^d))$ for $d\leq 4$, uniformly with respect to
  $\varepsilon$.  Indeed, for $d\leq 2$, it is obvious from estimate
  \eqref{eqn:II2}. For $3 \leq d \leq 4$, it comes from estimate
  \eqref{eqn:II2} and the Sobolev embedding
  $W^{1,1+2/d}(\R^d)\hookrightarrow L^{q}(\R^d)$, with $q =
  d(d+2)/(d-2)/(d+1)$ and $d>2$.
\end{remark}
  
\begin{remark}{(time compactness)}
\begin{itemize}
\item[1.]  To obtain time compactness there are a priori three
  ways. The first one is to obtain time compactness for the electric
  field by using standard control on the charge current
  $j^\varepsilon$ (see, e.g., \cite{DL88a, DL88b, BGP00}), and  the
  Amp\`ere equation given by $ \varepsilon^2\partial_t E^\varepsilon
  + j^\varepsilon=0.  $ This method fails because the presence of the factor
  $\varepsilon^2$ in front the time partial derivative does not give
  uniform bound  (with respect to $\varepsilon$) for $\partial_t
  E^\varepsilon$.  We obtain the same result from the charge
  conservation law (to obtain time compactness on the charge density
  $\rho^\varepsilon$, and thus on $E^\varepsilon$ via the Poisson
  equation), since the latter can be recovered by applying the spatial
  divergence operator to the Amp\`ere equation.  The second method is
  to use averaging lemmas \cite{DL89a}. With this method, we only
  obtain compactness in the space variables but not in the time
  variable, because in the limit $\varepsilon \rightarrow 0$, the term
  $ \varepsilon^2\partial_t f^\varepsilon$ in the Vlasov equation
  disappears \cite{DL89a, SRL09}. A third way is to obtain time
  compactness for the distribution function instead of the electric
  field. For this, we can show uniform convergence with respect to
  time in a weak topology for the phase-space variables. Showing time
  equi-continuity for the distribution function requires using the
  Vlasov equation. Here again, the presence of the factor $\varepsilon^2$ in
  front of the term $\partial_t f^\varepsilon$ in the Vlasov equation
  makes this method to fail.
\item[2.]  Point $iv)$ of Theorem~\ref{th:SCDC} shows that the lack of
  time compactness is in fact a necessary condition for obtaining a
 genuine or a non-degenerate diffusion equation in the limit  $\varepsilon \rightarrow 0$.
  \end{itemize}
\end{remark}

\begin{remark}{(boundary conditions)}
  As shown in the proof of Theorem~\ref{th:SCDC}, the relation $v\cdot
  \nabla_x f=0$, is a direct consequence of a priori estimates. By a
  direct computation using Fourier series (see the proof of
  Lemma~\ref{lem:EFF}) it has been proved  that the limit point $f$ is
  independent of the space variable $x$.  This is the ergodic property
  of the torus. Obviously the same property is true when the torus is
  replaced by any domain where the free flow trajectory
  $(x_0,v_0)\mapsto (x_0+v_0t,v_0)$ with specular reflections at the
  boundary are dense (this is a definition of ergodicity).  Extending
  the present analysis to this more general case may be very useful.
\end{remark}  

\begin{remark}{(``cheap'' Landau damping)}
  \begin{itemize}
    \item[1.]  What we proved for the rescaled Vlasov--Poisson system,
      given by \eqref{eqn:V2}-\eqref{eqn:GL2}, is a ``cheap'' version
      of the Mouhot--Villani version of the Landau damping
      \cite{MV10}, i.e.  that (under convenient hypotheses of
      regularity for initial conditions and smallness for initial
      perturbations) the self-consistent electric field $E$ of the
      Vlasov--Poisson system \eqref{eqn:V}-\eqref{eqn:GL} vanishes
      strongly when $t\rightarrow +\infty$, while the distribution $f$
      relaxes, in a weak topology, towards a spatially homogeneous
      stationary solution of the Vlasov-Poisson system. Indeed, if
      $\varepsilon$ is the ratio of the electric field $E(t)$ of
      \eqref{eqn:V}-\eqref{eqn:GL} to the electric field
      $E^\varepsilon(t)$ of \eqref{eqn:V2}-\eqref{eqn:GL2} (implying
      that $|E^\varepsilon(t)| > |E(t)|$), with the change of time
      scale $t\rightarrow t/\varepsilon^2$, the Vlasov--Poisson system
      \eqref{eqn:V}-\eqref{eqn:GL} becomes  the rescaled Vlasov--Poisson
      system \eqref{eqn:V2}-\eqref{eqn:GL2}. In other words, the limit
      $t\rightarrow +\infty$ in \eqref{eqn:V}-\eqref{eqn:GL} is
      equivalent  to the limit  $\varepsilon \rightarrow 0$ in
      \eqref{eqn:V2}-\eqref{eqn:GL2}, and $\varepsilon =1/\sqrt{t}$
      stands for the smallest rate at which the electric field $E(t)$
      tends to zero when  $t\rightarrow +\infty$.
  \item[2.]  By considering the rescaled Vlasov--Poisson system
    \eqref{eqn:V2}-\eqref{eqn:GL2} in the framework of the Landau
    damping, we observe that under the hypotheses of \cite{MV10} the
    electric field converges strongly to zero. From
    Theorem~\ref{th:SCDC}, this strong convergence corresponds to a
    zero diffusion. The Mouhot--Villani result \cite{MV10} is obtained
    for small perturbations (in some analytic norms) of a stable
    equilibrium profile (in velocity variables).  Here, we are
    interested in unstable equilibrium  profiles that lead to a non-zero
    diffusion in the velocity space.
  \end{itemize}  
\end{remark}

The velocity diffusion operator should arise  when we pass to the limit in
the term $ \varepsilon^{-1} \nabla_v \cdot  \fint dx\,  E^\varepsilon
f^\varepsilon.  $ A rigorous proof of this fact remains an open issue
and will be the matter of a future work.  Nevertheless, we can show,
at least formally, what is the structure of this term by using a
simple iteration of the Duhamel formula.  This is the aim of the next
section.

\subsection{Duhamel formula and Fick-type law}
\label{ss:IDF}
Here, we derive formally a Fick-type law for the flux term
\[
\overline{\fint dx \, \frac{E^\varepsilon
    f^\varepsilon}{\varepsilon}}, 
\]
appearing in \eqref{preDiffEq_1}.  Most of developments of this
section are formal, but they allow us to point out the difficulties for
showing rigorously the diffusion limit.  This Fick-type law can be obtained from two
ways. The first way is a global in time approach, which involves the
initial condition $f_0^\varepsilon$, while the second one, a local in
time approach, does not. Each approach has its advantages (Lemmas~\ref{lem:T1AG}~and~\ref{lem:QAL}) and
drawbacks (Remarks~\ref{RKOI1}~and~\ref{RKOI2}). In addition, for both approaches, the absence
of time decorrelation properties prevent us to
determine the structure and the properties of the diffusion
matrix. Nevertheless a formal WKB approximation allows us to obtain a
non-negative diffusion matrix in the non-self-consistent case.

\subsubsection{Global in time approach}
\label{ss:GIA}
Using the Duhamel formula and \eqref{def:Set}, we obtain from the Vlasov
equation \eqref{eqn:V2}, the following representation formula for
$f^\varepsilon(t)$, solution to \eqref{eqn:V2}-\eqref{eqn:GL2},
\begin{equation}
  \label{DFFF}
  f^\varepsilon(t) = S_t^\varepsilon f_0^\varepsilon
  -\frac{1}{\varepsilon}\int_0^tds \,S_{t-s}^\varepsilon E^\varepsilon(s)
  \cdot \nabla_v f^\varepsilon(s).
\end{equation}
Substituting \eqref{DFFF} into
\begin{equation*}
 -\int_{\R^+}dt \int_{\R^d} dv \, \varphi \nabla_v \cdot \fint dx\,  \frac{E^\varepsilon f^\varepsilon}{\varepsilon}
  =\frac{1}{\varepsilon} \int_{\R^+}dt \int_{\R^d} dv \fint dx\,  \nabla_v\varphi \cdot E^\varepsilon
 f^\varepsilon, \quad \forall \varphi\in
 \mathcal{D}(\R^+\times \R^d),
\end{equation*}
we obtain
\begin{equation}
  \label{decT1T2}
  -\int_{\R^+}dt \int_{\R^d} dv \,  \varphi \nabla_v \cdot \fint dx\,  \frac{E^\varepsilon f^\varepsilon}{\varepsilon}
  = T_1^\varepsilon(\varphi) + T_2^\varepsilon(\varphi),
\end{equation}  
where 
\begin{equation*}
  \label{def:T1}
  T_1^\varepsilon(\varphi):= \int_{\R^+} dt
  \int_{\R^d} dv\,  \frac{1}{\varepsilon}
  \nabla_v\varphi(t,v)\cdot \fint dx\, E^\varepsilon(t,x)
  f_0^\varepsilon(x-vt/\varepsilon^2,v),
\end{equation*}
and
\begin{multline*}
  \label{def:T2}
    T_2^\varepsilon(\varphi):= -\int_{\R^+} dt
   \int_{\R^d} dv\,
   \frac{1}{\varepsilon^2}\nabla_v\varphi(t,v)\cdot
   \\ \int_0^t ds\fint dx\,E^\varepsilon(t,x)
   E^\varepsilon(s,x-v(t-s)/\varepsilon^2)
   \cdot(\nabla_vf^\varepsilon)(s,x-v(t-s)/\varepsilon^2,v).
\end{multline*}
For the term $T_1^\varepsilon$ we have
\begin{lemma}
\label{lem:T1AG}
  Assume that $f_0^\varepsilon$ satisfies the hypotheses of
  Theorem~\ref{th:SCDC}. In addition we suppose that there exists
  a constant $\mathcal{C}_0$, independent of $\varepsilon$, such that for $|\alpha| \leq 1$,
  \begin{equation}
    \label{mixingH}
    \begin{aligned}
       & 
      \sum_{k\in \Z_*^d} \left(|k|^{-1} \| \partial_v^\alpha \hat{f}_0^\varepsilon(k)\|_{L^1(\R^d)}\right)^2
      \leq \mathcal{C}_0, \  & \mbox{ if } d=1, & \  \mbox{ and, }\\
     & \sum_{k\in \Z_*^d} \left(|k|^{-2} \| \partial_v^\alpha \hat{f}_0^\varepsilon(k)\|_{L^1(\R^d)}\right)^{1+2/d}
      \leq \mathcal{C}_0, \    & \mbox{ if } d\geq 2.
    \end{aligned}
    \end{equation} 
  Then
  \begin{equation}
    \label{eqnT1:1}
    T_1^\varepsilon \rightharpoonup 0 \  \mbox{ in }
    \   \mathcal{D}'(\R^+\times \R^d).
  \end{equation}
\end{lemma}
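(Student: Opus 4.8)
The goal is to show that the linear functional $T_1^\varepsilon(\varphi)$, which carries the initial datum through the free flow, vanishes in the limit $\varepsilon\to0$ against any test function $\varphi\in\mathcal D(\R^+\times\R^d)$. The structure of $T_1^\varepsilon$ is a pairing of $E^\varepsilon(t,x)$ with the transported initial datum $f_0^\varepsilon(x-vt/\varepsilon^2,v)$, weighted by $\varepsilon^{-1}\nabla_v\varphi$. The essential mechanism is that the fast transport $x\mapsto x-vt/\varepsilon^2$ makes the spatial average of the product oscillate rapidly in $v$, so that after averaging over the torus the contribution of each nonzero Fourier mode in $x$ decays, leaving only the $k=0$ mode of $f_0^\varepsilon$ which is precisely annihilated by the zero-mean condition \eqref{ZMEC} on $E^\varepsilon$.

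\textbf{Step 1 (Fourier expansion on the torus).} I would write both $E^\varepsilon(t,x)$ and $f_0^\varepsilon(x-vt/\varepsilon^2,v)$ as Fourier series in $x$, so that the space average $\fint dx$ collapses the double sum to a diagonal sum over $k\in\Z^d$. The spatial shift contributes a phase factor $\exp(\mathrm i\,k\cdot v\,t/\varepsilon^2)$ multiplying $\hat f_0^\varepsilon(k,v)$ against $\widehat{E^\varepsilon}(t,-k)$. The zero-mean condition \eqref{ZMEC} kills the $k=0$ term, so the sum runs over $k\in\Z_*^d:=\Z^d\setminus\{0\}$, and each summand carries the oscillatory phase in $t$.

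\textbf{Step 2 (non-stationary phase / integration by parts in $t$).} For each fixed $k\neq0$ and each fixed $v$ with $k\cdot v\neq0$, the factor $\varepsilon^{-1}\exp(\mathrm i\,k\cdot v\,t/\varepsilon^2)$ is a fast oscillation in $t$. The plan is to integrate by parts in $t$, writing $\varepsilon^{-1}e^{\mathrm ik\cdot vt/\varepsilon^2}=\dfrac{\varepsilon}{\mathrm i\,k\cdot v}\,\partial_t e^{\mathrm ik\cdot vt/\varepsilon^2}$, thereby transferring the $t$-derivative onto the smooth, compactly supported remaining factors (namely $\varphi$, $\hat f_0^\varepsilon$ which is $t$-independent, and $\widehat{E^\varepsilon}(t,-k)$). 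This converts the $\varepsilon^{-1}$ prefactor into a gain of $\varepsilon$, at the cost of a $|k\cdot v|^{-1}$ weight and a time derivative landing on $E^\varepsilon$. The mixing hypotheses \eqref{mixingH} are exactly the summability conditions in $k$ (with the $|k|^{-1}$ or $|k|^{-2}$ weights, matching the exponents $2$ when $d=1$ and $1+2/d$ when $d\geq2$) needed to sum the resulting bound over $k\in\Z_*^d$ uniformly in $\varepsilon$; they control the resonant denominators by keeping $\nabla_v$-derivatives of $\hat f_0^\varepsilon$ integrable.

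\textbf{Step 3 (handling the resonant set and passing to the limit).} The obstacle, and the place where care is required, is the resonant set $\{v:\ k\cdot v=0\}$ where the integration by parts degenerates. On a neighborhood of this set the phase is not oscillatory, but its Lebesgue measure is small: for $v$ in the support of $\nabla_v\varphi$ (a fixed compact set away from, or intersecting, the resonance), the set where $|k\cdot v|<\varepsilon^\theta$ has measure $O(\varepsilon^\theta/|k|)$ for a suitable exponent $\theta\in(0,1)$. I would therefore split the $v$-integral into a resonant zone $|k\cdot v|<\varepsilon^\theta$, estimated by the uniform $L^\infty$ and $L^2$ bounds on $E^\varepsilon$ (from \eqref{eqn:keef}, giving the $\varepsilon^{-1/2}$ factor) times the small measure, and a non-resonant zone $|k\cdot v|\geq\varepsilon^\theta$, where the integration-by-parts estimate of Step~2 applies with the denominator bounded below by $\varepsilon^\theta$. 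Balancing $\theta$ so that both zones give a positive power of $\varepsilon$ yields $T_1^\varepsilon(\varphi)\to0$. The main difficulty I anticipate is precisely this resonance balancing combined with the worst-case $\varepsilon^{-1/2}$ growth of $\|E^\varepsilon\|_{L^2}$: one must verify that the prefactor $\varepsilon^{-1}$, even after only partial cancellation near resonance, is overcome by the small-measure gain, so the choice of $\theta$ and the summability exponents in \eqref{mixingH} are not independent but must be tuned together to close the estimate.
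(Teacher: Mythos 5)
Your Step 1 coincides with the paper's, but your Step 2 contains a genuine gap that the rest of the argument cannot repair: the integration by parts in $t$ transfers a time derivative onto $\widehat{E}^\varepsilon(t,-k)$, and no uniform-in-$\varepsilon$ control of $\partial_t E^\varepsilon$ exists in this regime. For the rescaled system the Amp\`ere relation reads $\varepsilon^2\partial_t E^\varepsilon + j^\varepsilon = 0$, so $\partial_t E^\varepsilon = O(\varepsilon^{-2})$, and the factor $\varepsilon$ gained by the integration by parts is exactly consumed: the term carrying $\varphi\,\partial_t\widehat{E}^\varepsilon$ is again $O(\varepsilon^{-1})$, no better than what you started from. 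This is structural, not technical: by point $iv)$ of Theorem~\ref{th:SCDC}, time compactness of $E^\varepsilon$ forces the limit to degenerate, so a proof of Lemma~\ref{lem:T1AG} that consumes time regularity of $E^\varepsilon$ is self-defeating in this problem. Moreover, even granting (counterfactually) a uniform bound on $\partial_t E^\varepsilon$, your Step 3 balancing is exactly critical and does not close: with the sharp bound $E^\varepsilon\in L^\infty(\R^+;L^2(\mathbb{T}^d))$ of Remark~\ref{rk:EE}, the resonant slab $\{|k\cdot v|<\varepsilon^\theta\}$ contributes $O(\varepsilon^{\theta-1})$ while the non-resonant zone contributes $O(\varepsilon^{1-\theta})$, and no choice of $\theta$ makes both vanish (with \eqref{eqn:keef} alone it is worse, $\varepsilon^{\theta-3/2}$ against $\varepsilon^{1-\theta}$; a dyadic refinement in $|k\cdot v|$ is likewise critical at every scale and yields only $O(1)$).

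The paper's proof oscillates in $v$, not in $t$, and this removes both obstructions at once. For $k\neq 0$ and $t>0$ the $v$-gradient of the phase $k\cdot v\,t/\varepsilon^2$ equals $kt/\varepsilon^2$ and never vanishes, so there is no resonant set at all: one writes $\exp(-{\rm i}k\cdot v t/\varepsilon^2)=({\rm i}\varepsilon^2/t)\,(k/|k|^2)\cdot\nabla_v \exp(-{\rm i}k\cdot v t/\varepsilon^2)$ and integrates by parts in $v$, converting the prefactor $\varepsilon^{-1}$ into $\varepsilon/t$, where $1/t$ is absorbed by the test function ($\|\varphi/t\|_{L^1(\R^+;W^{2,\infty}(\R^d))}<\infty$ for $\varphi\in\mathcal{D}(\R^+\times\R^d)$), while the derivative lands on $\hat f_0^\varepsilon(k,v)$ and $\nabla_v\varphi$ --- never on $E^\varepsilon$, whose only role is through the time-uniform bound \eqref{eqnBEE2}, $\||k|\widehat{E}^\varepsilon\|_{L^\infty(\R^+;\ell^{1+d/2}(\Z^d))}\le c_0$ for $d\ge 2$, and the uniform $L^2$ bound of Remark~\ref{rk:EE} for $d=1$. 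This also explains the exact shape of \eqref{mixingH}, which you misread as controlling resonant denominators: the weight $|k|^{-1}$ comes from the factor $k/|k|^2$ of the velocity integration by parts, the extra $|k|^{-1}$ for $d\ge2$ from pairing against $|k|\widehat{E}^\varepsilon$, the exponents $2$ and $1+2/d$ are the H\"older conjugates of the available $\ell^p$ bounds on $\widehat{E}^\varepsilon$, and the condition $|\alpha|\le 1$ accounts for the one derivative falling on $\hat f_0^\varepsilon$. If you wish to salvage your route you would need precisely the time regularity of $E^\varepsilon$ that the paper shows is unavailable; the correct fix is to exploit the non-stationarity of the phase in the velocity variable.
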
  
\begin{proof}
  Using Fourier series  and the zero-mean electrostatic condition
  \eqref{ZMEC}, we rewrite the term $ T_1^\varepsilon$ as
\begin{equation*}
  \label{pT1_0}
  T_1^\varepsilon(\varphi) = \int_{\R^+} dt
  \int_{\R^d} dv  \sum_{k\in \Z_\ast^d}\, \frac{1}{\varepsilon}
  \nabla_v\varphi(t,v)\cdot \widehat{E}^\varepsilon(t,-k)
  \widehat{f}_0^\varepsilon(k,v) \exp(-{\rm
    i} k\cdot v t/\varepsilon^{2}).
\end{equation*}
Using a velocity integration by parts, we obtain
\begin{equation*}
  \label{pT1_1}
  T_1^\varepsilon(\varphi) = -{\rm i}
  \varepsilon \int_{\R^+} dt\, \frac{1}{t}\int_{\R^d} dv
  \sum_{k\in \Z_\ast^d}\, \nabla_v \cdot\Big( \frac{k}{|k|^2}
  \nabla_v\varphi(t,v)\cdot
  \widehat{E}^\varepsilon(t,-k) \widehat{f}_0^\varepsilon(k,v) \Big)
  \exp(-{\rm i} k\cdot v t/\varepsilon^{2}),
\end{equation*}
which leads to
\begin{eqnarray}
  \label{pT1_2}
  | T_1^\varepsilon(\varphi)| &\leq&
  \varepsilon\int_{\R^+} dt\, \frac{1}{t} \sum_{k\in
    \Z_\ast^d}\, |\widehat{E}^\varepsilon(t,k)| |k|^{-1} \nonumber
  \\ && \big( \|\nabla_v^2\varphi(t)\|_{L^{\infty}(\R^d)}
  \| \widehat{f}_0^\varepsilon(k)\|_{L^1(\R^d)}
  +\|\nabla_v\varphi(t)\|_{L^{\infty}(\R^d)} \|\nabla_v
  \widehat{f}_0^\varepsilon(k)\|_{L^1(\R^d)} \big).
\end{eqnarray}
Using the bound \eqref{eqn:II2} and the Hausdorff-Young inequality, we obtain
for $d\geq 2$,
\begin{equation}
  \label{eqnBEE2}
  \||k|\widehat{E}^\varepsilon\|_{L^\infty(\R^+;\ell^{1+d/2}(\Z^d))}
  \leq
  \|{E}^\varepsilon\|_{L^\infty(\R^+;W^{1,1+2/d}(\mathbb{T}^d))} \leq
  c_0.
\end{equation}
Using H\"older's inequality, \eqref{eqnBEE2} and the assumption \eqref{mixingH}, we obtain from
\eqref{pT1_2},
\begin{equation*}
  \label{pT1_3}
  | T_1^\varepsilon(\varphi)| \leq
  \varepsilon  2c_0\mathcal{C}_0^{d/(d+2)}
  \|\varphi/t\|_{L^{1}(\R^+;W^{2,\infty}(\R^d))}.
\end{equation*}
In the same way, using  Remark~\ref{rk:EE} and the Cauchy-Scharwz
inequality, we obtain for $d=1$,
\begin{equation*}
  \label{pT1_4}
  |T_1^\varepsilon(\varphi)| \leq
  \varepsilon 2 
  \mathcal{C}_0^{1/2} 
  \|{E}^\varepsilon\|_{L^\infty(\R^+;L^2(\mathbb{T}^d))}
  \|\varphi/t\|_{L^{1}(\R^+;W^{2,\infty}(\R^d))},
\end{equation*}
which ends the proof of Lemma~\ref{lem:T1AG}.
\end{proof}

\begin{remark}
  In Lemma~\ref{lem:T1AG}, the regularity assumption for $f_0^\varepsilon$
  might be refined but with the presence of the factor
  $\varepsilon^{-1}$ in the term $T_1^\varepsilon$, some mixing-type
  hypotheses seem compulsory.
\end{remark}
  
We now deal with the term $T_2^\varepsilon$.   Performing the change of
time variable $s=t-\varepsilon^2\sigma$, followed by the change of
space variable $x'=x-\sigma v$, and  using a velocity integration by
parts and $x$-periodicity, we obtain
\begin{multline}
\label{eqnT2:1}  
 T_2^\varepsilon(\varphi) = \int_{\R^+} dt \int_{\R^d}
dv \fint dx \int_0^{t/\varepsilon^2} d\sigma \,
\\ f^\varepsilon(t-\varepsilon^2\sigma,x,v) \nabla_v \cdot
\big( E^\varepsilon(t-\varepsilon^2\sigma,x) \otimes
E^\varepsilon(t,x+ v\sigma ) \nabla_v \varphi(t,v)\big).
\end{multline}
Using the time characteristic function $\chi_{[0,t/\varepsilon^2]}(\sigma)$,
equation \eqref{eqnT2:1} can be recast as
\begin{equation}
\label{eqnT2:2}  
T_2^\varepsilon(\varphi) = 
{J}^\varepsilon(\varphi) +  {M}^\varepsilon(\varphi),
\end{equation}
where
\begin{equation*}
  \label{def:Q}
  {J}^\varepsilon(\varphi) = \int_{\R^+} dt
  \int_{\R^d} dv\,  f(t,v) \nabla_v \cdot
  \bigg(\int_{\R^+} d\sigma \fint dx\,
  \chi_{[0,t/\varepsilon^2]}(\sigma) E^\varepsilon(t-\varepsilon^2\sigma,x) \otimes
  E^\varepsilon(t,x+ v\sigma )\, \nabla_v\varphi(t,v)\bigg),
\end{equation*}
and
\begin{multline*}
\label{def:M}
 {M}^\varepsilon(\varphi)  := \int_{\R^+} dt
\int_{\R^d} dv\,  \int_{\R^+}d\sigma \\ \fint dx\,  \chi_{[0,t/\varepsilon^2]}(\sigma)
\big(f^\varepsilon(t-\varepsilon^2\sigma,x,v) - f(t,v) \big) \nabla_v
\cdot \big( E^\varepsilon(t-\varepsilon^2\sigma,x) \otimes
E^\varepsilon(t,x+ v\sigma ) \nabla_v \varphi(t,v)\big).
\end{multline*}
If we assume that
\begin{equation}
  \label{limQexists}
  \lim_{\varepsilon\rightarrow 0}  \, {J}^\varepsilon(\varphi)
   \ \ \mbox{exists},
\end{equation}  
and 
\begin{equation}
  \label{limM0}
  \lim_{\varepsilon\rightarrow 0} \,  {M}^\varepsilon (\varphi)
   =0,
\end{equation}  
then we obtain from  \eqref{eqnT2:2}, 
\begin{equation}
\label{eqnT2:4}  
\lim_{\varepsilon\rightarrow 0} \,T_2^\varepsilon(\varphi)
=  \int_{\R^+} dt \int_{\R^d} dv\,  f(t,v) \nabla_v \cdot
\big(\mathscr{D}(t,v)^T \nabla_v\varphi(t,v)\big),
\end{equation}
with
\begin{eqnarray}
  \mathscr{D}(t,v)&:=& \lim_{\varepsilon\rightarrow 0}
  \int_{\R^+}d\sigma 
  \fint dx\, \chi_{[0,t/\varepsilon^2]}(\sigma)
  E^\varepsilon(t,x+ v\sigma )\otimes  E^\varepsilon(t-\varepsilon^2\sigma,x) \nonumber\\ &=&
  \lim_{\varepsilon\rightarrow 0} \int_{\R^+}d\sigma \fint dx\,
  \chi_{[0,t/\varepsilon^2]}(\sigma) 
  E^\varepsilon(t,x)\otimes E^\varepsilon(t-\varepsilon^2\sigma,x-v\sigma).\label{def:CDif}
\end{eqnarray}
Using \eqref{decT1T2}, \eqref{eqnT1:1} and \eqref{eqnT2:4} to pass to
the limit $\varepsilon \rightarrow 0$ in \eqref{AVVE}, we obtain the
following diffusion equation,
\begin{equation}
  \label{def:DifEq}
  \partial_t f(t,v) - \nabla_v \cdot  \big(\mathscr{D}(t,v) \nabla_v
  f(t,v)\big)=0, \ \ \mbox{ in } \  \mathcal{D}'([0,T]\times \R^d).
\end{equation}
Few remarks are now in order.
\begin{remark}{(open issues)}
  \label{RKOI1}
  \begin{itemize}
 \item[1.] All computations involving the term $T_2^\varepsilon$ are
   formal and must be justified in a convenient functional
   framework. In order to
   justify \eqref{limQexists}, we have to show that
   \begin{equation*}
     \label{divvEE}
     R^\varepsilon(t,\sigma,x,v):= \chi_{[0,t/\varepsilon^2]}(\sigma)\nabla_v \cdot \big(
     E^\varepsilon(t-\varepsilon^2\sigma,x) \otimes
     E^\varepsilon(t,x+ v\sigma ) \nabla_v \varphi(t,v)\big)
   \end{equation*}
   converges weakly in $L^1(\R_t^+\times \R_\sigma^+ \times Q)$.  In
   order to prove \eqref{limM0} and justify \eqref{eqnT2:4}, we have
   to show that $R^\varepsilon$ converges strongly in
   $L^1(\R_t^+\times \R_\sigma^+ \times Q)$, since
   $f^\varepsilon(t-\varepsilon^2\sigma,x,v) - f(t,v)  \rightharpoonup
   0 $ in $L^\infty(\R_t^+\times \R_\sigma^+ \times Q)$ weak--$\ast$.
   We observe that a crucial point is to obtain enough integrability
   with respect the time variable $\sigma$, uniformly in $\varepsilon$.
   
\item[2.] As already observed, the bound $\|
  E^\varepsilon\|_{L^\infty(\R^+;W^{1,1+2/d}(\mathbb{T}^d))} \leq c_0
  <\infty$, does not imply strong convergence (because of the lack of
  time control or compactness) for the electric field $E^\varepsilon$,
  which would help to justify the above formal computations for the
  term $T_2^\varepsilon$.  However, this lack of time compactness is in
  fact necessary if we do not want to obtain a trivial equation, as
  stated in  point $iv)$ of Theorem~\ref{th:SCDC}. Indeed, from 
  point $iv)$ of Theorem~\ref{th:SCDC}, time compactness entails a
  strong convergence to zero of the electric field
  $E^\varepsilon$. This implies the vanishing of the diffusion matrix
  $\mathscr{D}$ given by  \eqref{def:CDif}. Without time compactness,
  the electric field $E^\varepsilon$ always converges weakly to zero,
  but not the quadratic electric tensor $E^\varepsilon\otimes
  E^\varepsilon$ (this is a property of weak convergence), which
  implies a non-trivial diffusion matrix $\mathscr{D}$.  Therefore,
  weak convergence seems mandatory to obtain a diffusion limit.
\item[3.]  Instead of time compactness, time decorrelation properties
  could help to justified rigorously above computations.  In the
  presence of a non-self-consistent but stochastic electric field,
  with convenient hypotheses, some time decorrelation properties allow
  us to justify rigorously the limit of the Vlasov equation \eqref{eqn:V2}
  towards diffusion equations \eqref{def:CDif}-\eqref{def:DifEq}.
  This is the object of Section~\ref{s:NSCSC}. 
  \end{itemize}
\end{remark}

\begin{remark}{(periodic or quasi-periodic time oscillations)}
  Since the defect of time compactness means that the system contains
  fast oscillations in time, it would be tempting to apply the
  analysis of this section to the case of a non-self-consistent
  deterministic electric field (satisfying convenient regularity
  assumptions) with two time scales, one being slow and not periodic,
  and the other being fast and (quasi-)periodic. Such a standard
  homogenization problem would lead to solve a hierarchy of equations
  where the free-streaming operator $v\cdot \nabla_x$, with periodic
  boundary condition, must be inverted at each stage of the
  hierarchy. Since the free-flow operator is not a Fredholm operator,
  the Fredholm alternative does not hold for such transport
  equation. On the contrary, when considering a non-self-consistent
  stochastic electric field the situation is completely different and
  we can obtain a diffusion behavior for the statistical average of
  the distribution function.  This is what is done in
  Section~\ref{s:NSCSC}.
\end{remark}
    {\bf \noindent An explicit form of the diffusion matrix in the
      non-self-consistent deterministic case}.  \\
\newline
\noindent
Here we pursue a little bit further the above formal analysis to
explicit the structure of the diffusion matrix \eqref{def:CDif} by
constructing a well-suited non-self-consistent deterministic electric
field.  This gives an example of diffusion matrix \eqref{def:CDif},
which is not zero and non-negative.  Introducing the Fourier series
decomposition of $E^\varepsilon$,
\begin{equation*}
\label{eqn_FSD}  
E^\varepsilon(t,x) =\sum_{k\in \Z^d} e^{\, {\rm i}k\cdot
  x}\widehat{E}^\varepsilon(t,k),
\end{equation*}
we assume the formal WKB expansion for the Fourier mode
$\widehat{E}^\varepsilon(t,k)$,
\begin{equation}
  \label{WKB_exp}
\widehat{E}^\varepsilon(t,k) = \sum_{j\geq 0} \varepsilon^j
\widehat{E}_j(t,k,\Omega(t,k)/\varepsilon^2),
\end{equation}
where complex vector-valued functions $(k,\tau) \mapsto
\widehat{E}_j(t,k,\tau)$ are $2\pi$-periodic with respect to the
variable $\tau$.  Here, functions
$\widehat{E}_j(t,k,\Omega(t,k)/\varepsilon^2)$ are Hermitian, i.e.
\[
\widehat{E}_j^\ast(t,k,\Omega(t,k)/\varepsilon^2)=\widehat{E}_j(t,-k,\Omega(t,-k)/\varepsilon^2),
\]
and the real-valued function $k\mapsto\Omega(t,k)$ is odd with respect
to the variable $k$. As a first approximation of \eqref{WKB_exp}, we
obtain
\begin{equation}
  \label{WKB_app}
  \widehat{E}^\varepsilon(t,k) = \widehat{E}_0(t,k)\exp\left(-{\rm i}
  \frac{\Omega(t,k)}{\varepsilon^2}\right) + \mathcal{O}(\varepsilon),
\end{equation}
where the real vector-valued function $k\mapsto \widehat{E}_0(t,k)$ is
even with respect to the variable $k$.   Using \eqref{WKB_app} and
time Taylor expansions,  we obtain from the definition of the
diffusion matrix \eqref{def:CDif},
\begin{align}
  \mathscr{D}(t,v)  = & \lim_{\varepsilon \rightarrow 0}
  \int_{\R^+}d\sigma \sum_{k\in \Z^d}   \chi_{[0,t/\varepsilon^2]}(\sigma)\exp\big({\rm
    i}  k\cdot v \sigma\big)
 \widehat{E}^\varepsilon(t,k) \otimes
 \widehat{E}^\varepsilon(t-\varepsilon^2\sigma,-k) 
 \nonumber\\ =&\lim_{\varepsilon
    \rightarrow 0} \bigg(  \int_{\R^+}d\sigma  \sum_{k\in
    \Z^d}  \chi_{[0,t/\varepsilon^2]}(\sigma) \exp\big({\rm i} [ k\cdot v \sigma +
    \Omega(t-\varepsilon^2\sigma,k)/\varepsilon^2 -
    \Omega(t,k)/\varepsilon^2]\big) \nonumber\\ &
    \widehat{E}_0(t,k)\otimes \widehat{E}_0(t-\varepsilon^2\sigma,-k) 
  +  \mathcal{O}(\varepsilon) \bigg)\nonumber\\ =&\lim_{\varepsilon
    \rightarrow 0} \bigg( \int_{\R^+}d\sigma \sum_{k\in
    \Z^d}  \chi_{[0,t/\varepsilon^2]}(\sigma) \exp\big(-{\rm i}\sigma
  [\partial_t\Omega(t,k)- k\cdot v ]\big)
  \widehat{E}_0(t,k) \otimes  \widehat{E}_0(t,k) +
  \mathcal{O}(\varepsilon)\bigg).\label{twqld}
\end{align}
Using   
\[
\lim_{\varepsilon \rightarrow 0 }\int_{\R^+}  d\sigma\, \chi_{[0,t/\varepsilon^2]}(\sigma) e^{-{\rm i} \sigma \tau } = \pi \delta(\tau) -
    {\rm i\,}{\rm p.v.}\left(\frac{1}{\tau}\right) \   \mbox{ in } \mathcal{D}'(\R), 
\]
and parity of functions $\Omega(t,k)$ and $\widehat{E}_0(t,k)$, we
obtain from \eqref{twqld},
\begin{equation}
  \mathscr{D}(t,v)  = \pi  \sum_{k\in \Z^d}  \widehat{E}_0(t,k)
  \otimes  \widehat{E}_0(t,k) \delta\big(\partial_t\Omega(t,k)-
  k\cdot v\big).\label{SCDQL}
\end{equation}
If we assume $\Omega(t,k)=\omega(k) t$, then
\eqref{SCDQL} is the diffusion matrix given by the quasilinear theory
\cite{KT73, Dav72}, i.e.
\begin{equation}
  \label{DQLTXTB}
\mathscr{D}(t,v)  = \pi  \sum_{k\in \Z^d}  \widehat{E}_0(t,k) \otimes
\widehat{E}_0(t,k) \delta\big(\omega(k)-k\cdot v \big).
\end{equation}

\begin{remark}
  \label{rem:DEGPHI}
  If the electric field $E^\varepsilon$ derives from
  a potential $\Phi^\varepsilon$, i.e., $E^\varepsilon(t,x)=-\nabla \Phi^\varepsilon(t,x)$, then,
  following the above computations, a WKB expansion of $\Phi^\varepsilon(t,x)$ similar to \eqref{WKB_exp} leads 
  to the diffusion matrix
\begin{equation*}
\mathscr{D}(t,v)  = \pi  \sum_{k\in \Z^d}  |\widehat{E}_0(t,k)|^2\, \frac{k\otimes k}{|k|^2}\,
\delta\big(\omega(k)-k\cdot v \big).
\end{equation*}
\end{remark}
  
\subsubsection{Local in time approach}
\label{ss:LITA}
We first integrate, with respect to the time variable, the space-averaged Vlasov equation
\begin{equation*}
\label{eqn_pre_dif}
 \partial_t \fint dx\, f^\varepsilon + \nabla_v \cdot
 \fint dx \,\frac{E^\varepsilon f^\varepsilon}{\varepsilon}=0, 
\end{equation*}
between the time $t$ and $t+\theta$, with $\theta> 0$.  Then, we
multiply the result by $\varphi\in  \mathcal{D}(\R^+\times \R^d)$ and 
we perform an integration with respect to the time and velocity variables.  
Finally, using the $L^2$-scalar product \eqref{def:L2SP} and a
velocity integration by parts, we obtain
\begin{equation}
\label{lita:eqn:1}
\bigg\langle \frac{f^\varepsilon(t+\theta) -
  f^\varepsilon(t)}{\theta}, \varphi \bigg\rangle
=\frac{1}{\varepsilon\theta}\int_{\R^+} dt \int_{\R^d}dv
\int_{t}^{t+\theta}ds \fint dx \,
f^\varepsilon(s){E^\varepsilon(s)}\cdot \nabla_v \varphi(t,v).
\end{equation}
Using the Duhamel formula and the notation \eqref{def:Set}, we obtain from the Vlasov
equation \eqref{eqn:V2}, the following representation formula for
$f^\varepsilon(s)$,
\begin{equation}
  \label{lita:eqn:2}
  f^\varepsilon(s) = S_{s-t+\hat{\theta}}^\varepsilon
  f^\varepsilon(t-\hat{\theta}) -\frac{1}{\varepsilon}
  \int_{t-\hat{\theta}}^s d\sigma\, S_{s-\sigma}^\varepsilon
  E^\varepsilon(\sigma) \cdot \nabla_v f^\varepsilon(\sigma),
\end{equation}
with $\hat{\theta}$ being an arbitrary non-negative time.  Substituting
\eqref{lita:eqn:2} into \eqref{lita:eqn:1}, we obtain
\begin{equation}
\label{lita:eqn:3}
\bigg\langle \frac{f^\varepsilon(t+\theta) -
  f^\varepsilon(t)}{\theta}, \varphi \bigg\rangle = 
\mathcal{T}_{1}^\varepsilon(\varphi)  + 
\mathcal{T}_{2}^\varepsilon(\varphi), 
\end{equation}
where
\begin{equation}
\label{lita:eqn:T1}
\mathcal{T}_{1}^\varepsilon(\varphi) :=  \int_{\R^+}
dt \int_{\R^d}dv \int_{t}^{t+\theta}ds \fint dx\, \frac{1}{\varepsilon \theta}
 {E^\varepsilon(s)}\cdot \nabla_v\varphi(t,v)
S_{s-t+\hat{\theta}}^\varepsilon f^\varepsilon(t-\hat{\theta}),
\end{equation}
and 
\begin{multline*}
\label{lita:eqn:T2}
\mathcal{T}_{2}^\varepsilon( \varphi) :=  \int_{\R^+}
dt \int_{\R^d}dv
\\ \int_{t}^{t+\theta}ds\int_{t-\hat{\theta}}^{s}d\sigma \fint dx\,
\frac{1}{\varepsilon^2 \theta} S_{s-\sigma}^\varepsilon
f^\varepsilon(\sigma) \nabla_v \cdot \Big( S_{s-\sigma}^\varepsilon
E^\varepsilon(\sigma,x)\otimes E^\varepsilon(s,x) \nabla_v
\varphi(t,v)\Big). 
\end{multline*}
For the term $ \mathcal{T}_1^\varepsilon$, we assume that
\begin{equation}
  \label{T1AL}
  \mathcal{T}_1^\varepsilon \rightharpoonup 0 \  \mbox{ in }
  \   \mathcal{D}'(\R^+\times \R^d).
\end{equation}

We now deal with the term $\mathcal{T}_{2}^\varepsilon$. Using
$x$-periodicity and the change of time variable $s'=s-t$, the term
$\mathcal{T}_{2}^\varepsilon$ becomes
\begin{multline}
\label{lita:eqn:T2-2}
\mathcal{T}_{2}^\varepsilon(\varphi) =  \int_{\R^+}
dt \int_{\R^d}dv
\int_{0}^{\theta}ds\int_{t-\hat{\theta}}^{t+s}d\sigma \fint dx\,
\\ \frac{1}{\varepsilon^2 \theta}
f^\varepsilon(\sigma,x,v) \nabla_v \cdot \Big(
E^\varepsilon(\sigma,x)\otimes
E^\varepsilon(t+s,x+v(t+s-\sigma)/\varepsilon^2) \nabla_v
\varphi(t,v)\Big). 
\end{multline}
Using the change of time variable
$\sigma'=(t+s-\sigma)/\varepsilon^2$, equation \eqref{lita:eqn:T2-2}
becomes
\begin{multline}
\label{lita:eqn:T2-3}
\mathcal{T}_{2}^\varepsilon(\varphi) =  \int_{\R^+}
dt \int_{\R^d}dv
\int_{0}^{\theta}ds\int_{0}^{(s+\hat{\theta})/\varepsilon^2}d\sigma \fint dx\,
\\ \frac{1}{\theta} 
f^\varepsilon(t+s-\varepsilon^2\sigma,x,v) \nabla_v \cdot \Big(
E^\varepsilon(t+s-\varepsilon^2\sigma,x)\otimes
E^\varepsilon(t+s,x+v\sigma) \nabla_v \varphi(t,v)\Big). 
\end{multline}
Taking $\theta=\varepsilon^2 \uptau\ $ and $\ \hat{\theta}= \varepsilon^2
\eta$, and using the change of time variable $s'=s/\varepsilon^2$,
followed by the change of time variable $\sigma'=\sigma -\eta$,
equation \eqref{lita:eqn:T2-3} becomes
\begin{multline*}
\label{lita:eqn:T2-4}
\mathcal{T}_{2}^\varepsilon(\varphi) =  \int_{\R^+}
dt \int_{\R^d}dv \, \frac{1}{\uptau}
\int_{0}^{\uptau}ds\int_{-\eta}^{s}d\sigma   \fint dx\,
f^\varepsilon(t+(s-\eta-\sigma)\varepsilon^2,x,v)  \\ \nabla_v \cdot
\Big( E^\varepsilon(t+ (s-\eta-\sigma)\varepsilon^2,x)\otimes
E^\varepsilon(t+s\varepsilon^2,x+v(\sigma+\eta)) \nabla_v
\varphi(t,v)\Big). 
\end{multline*}
This equation can be recast as
\begin{equation}
  \label{T2-DecQM}
  \mathcal{T}_{2}^\varepsilon(\varphi)= 
  \mathcal{J}^\varepsilon(\varphi) + 
  \mathcal{M}^\varepsilon(\varphi),   
\end{equation}
where
\begin{multline*}
\label{lita:eqn:T2-5}  
\mathcal{J}^\varepsilon(\varphi)=  \int_{\R^+} dt
\int_{\R^d}dv f(t,v)   \nabla_v \cdot \bigg(  \frac{1}{\uptau}
\int_{0}^{\uptau}ds\int_{-\eta}^{s}d\sigma \fint dx\,  \\ 
E^\varepsilon(t+ (s-\eta-\sigma)\varepsilon^2,x)\otimes
E^\varepsilon(t+s\varepsilon^2,x+v(\sigma+\eta)) \nabla_v
\varphi(t,v)\bigg), 
\end{multline*}
and
\begin{multline*}
 \mathcal{M}^\varepsilon(\varphi)  := \int_{\R^+} dt
\int_{\R^d} dv\,  \frac{1}{\uptau}
\int_{0}^{\uptau}ds\int_{-\eta}^{s}d\sigma \fint dx\, \big
(f^\varepsilon(t+(s-\eta-\sigma)\varepsilon^2,x,v) -
f(t,v)\big)\\ \nabla_v \cdot \Big( E^\varepsilon(t+
(s-\eta-\sigma)\varepsilon^2,x)\otimes
E^\varepsilon(t+s\varepsilon^2,x+v(\sigma+\eta)) \nabla_v \varphi(t,v)\Big).
\end{multline*}
The next lemma justifies that in the case where $\uptau$ and $\eta$
are finite, the  term $ \mathcal{J}^\varepsilon(\varphi)$
has a limit as $\varepsilon \rightarrow 0$. Defining 
\begin{equation}
  \label{defDeps}
   \mathscr{D}^\varepsilon(t,v):= \frac{1}{\uptau}
   \int_{0}^{\uptau}ds\int_{-\eta}^{s}d\sigma  \fint dx\,
   E^\varepsilon(t+s\varepsilon^2,x+v(\sigma+\eta))\otimes
    E^\varepsilon(t+ (s-\eta-\sigma)\varepsilon^2,x),
\end{equation}
 we have 
\begin{lemma}
  \label{lem:QAL} Let  $\uptau$ and $\eta$ be finite. Then, $\mathcal{J}^\varepsilon$ has a limit in
  $\mathcal{D}'(\R^+ \times \R^d)$ such that 
  \begin{equation}
    \label{WLfD}
    \lim_{\varepsilon \rightarrow 0}\,
    \mathcal{J}^\varepsilon(\varphi)  = \int_{\R^+}dt \int_{\R^d} dv \, f \nabla_v
    \cdot (\mathscr{D}^T\nabla_v \varphi), \quad \forall \varphi
    \in \mathcal{D}(\R^+ \times \R^d),
  \end{equation}
where $\mathscr{D}$ is the weak limit of $\mathscr{D}^\varepsilon$ (up
to a subsequence) in the following sense,
\begin{equation}
  \label{W11WLD}
  \mathscr{D}^\varepsilon \rightharpoonup \mathscr{D}  \ \ \mbox{ in }
  \ \ L_{\rm loc}^1(\R^+;W_{\rm loc}^{1,1}(\R^d)) \ \  \mbox{weak},
\end{equation}
and
\begin{equation}
  \label{LinftyWLD}
  \mathscr{D}^\varepsilon \rightharpoonup \mathscr{D}  \ \ \mbox{ in }
  \ \ L^\infty(\R^+;L^{\infty}(\R^d)) \ \ \mbox{weak}\!-\!\ast.
\end{equation}

\end{lemma}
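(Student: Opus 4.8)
The plan is to split the statement into two independent tasks: first, deriving uniform (in $\varepsilon$) a priori bounds on $\mathscr{D}^\varepsilon$ that yield the two weak compactness statements \eqref{W11WLD} and \eqref{LinftyWLD}; second, using these weak convergences to pass to the limit in $\mathcal{J}^\varepsilon$ and obtain \eqref{WLfD}. To expose the bilinear structure in the electric field, I would first rewrite the space average in \eqref{defDeps} through Fourier series on the torus: writing $E^\varepsilon(t,x)=\sum_{k}e^{{\rm i}k\cdot x}\widehat{E}^\varepsilon(t,k)$ and using the zero-mean condition \eqref{ZMEC} (so that the $k=0$ mode drops out), one gets
\begin{equation*}
\mathscr{D}^\varepsilon(t,v)=\frac{1}{\uptau}\int_0^{\uptau}ds\int_{-\eta}^{s}d\sigma\sum_{k\in\Z_\ast^d}e^{{\rm i}k\cdot v(\sigma+\eta)}\,\widehat{E}^\varepsilon(t+s\varepsilon^2,k)\otimes\widehat{E}^\varepsilon(t+(s-\eta-\sigma)\varepsilon^2,-k).
\end{equation*}
The finiteness of $\uptau$ and $\eta$ is used crucially here: it confines the $(s,\sigma)$ integration to the bounded set $[0,\uptau]\times[-\eta,s]$, on which the weight $(\sigma+\eta)$ produced by differentiation in $v$ stays bounded.

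For \eqref{LinftyWLD}, I would bound $\mathscr{D}^\varepsilon$ in $L^\infty(\R^+;L^\infty(\R^d))$ by applying Cauchy--Schwarz in $k$ (Parseval) inside the $(s,\sigma)$ integral, which gives $|\mathscr{D}^\varepsilon(t,v)|\le(\uptau+\eta)\,\|E^\varepsilon\|_{L^\infty(\R^+;L^2(\mathbb{T}^d))}^2$, uniformly in $(t,v)$. By Remark~\ref{rk:EE} this $L^2$ norm is bounded uniformly in $\varepsilon$ for $d\le 4$, so $\{\mathscr{D}^\varepsilon\}$ is uniformly bounded in $L^\infty(\R^+;L^\infty(\R^d))$; Banach--Alaoglu then yields, up to a subsequence, a weak--$\ast$ limit $\mathscr{D}$, which is \eqref{LinftyWLD}.

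For \eqref{W11WLD} the $L^1_{\rm loc}$ bound on $\mathscr{D}^\varepsilon$ follows from the previous $L^\infty$ bound, so the real content is the gradient. Differentiating the display above in $v$ brings down the bounded factor $(\sigma+\eta)$ together with one extra power of $|k|$, i.e. a spatial gradient of $E^\varepsilon$, so that $|\nabla_v\mathscr{D}^\varepsilon(t,v)|\le C\sum_{k}|k|\,|\widehat{E}^\varepsilon(t_1,k)|\,|\widehat{E}^\varepsilon(t_2,-k)|$ uniformly in $v$. Controlling this $|k|$-weighted bilinear sum is the main obstacle. Here I would use the uniform bound \eqref{eqn:II2}, equivalently $\||k|\widehat{E}^\varepsilon\|_{\ell^{1+d/2}}\le c_0$ from \eqref{eqnBEE2}, together with the $\ell^2$ bound of Remark~\ref{rk:EE} and H\"older/interpolation in $k$. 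The estimate closes cleanly whenever $\nabla_x E^\varepsilon\in L^2(\mathbb{T}^d)$ holds uniformly --- in particular in the physically relevant case $d=1$, where $W^{1,1+2/d}=W^{1,3}\hookrightarrow W^{1,2}$ and a plain Cauchy--Schwarz suffices --- while in higher dimension it is delicate and requires the full interpolation between \eqref{eqnBEE2} and the $\ell^2$ bound. Since the resulting bound on $\nabla_v\mathscr{D}^\varepsilon$ is in fact uniform in $v$, equi-integrability is automatic and Dunford--Pettis gives, up to a further subsequence, weak convergence in $L^1_{\rm loc}(\R^+;W^{1,1}_{\rm loc}(\R^d))$; the two limits necessarily coincide, since both are weak limits of the same sequence tested against $\mathcal{D}(\R^+\times\R^d)$.

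Finally, to obtain \eqref{WLfD}, I would recognise that $\mathcal{J}^\varepsilon(\varphi)=\int_{\R^+}dt\int_{\R^d}dv\,f(t,v)\,\nabla_v\cdot\big((\mathscr{D}^\varepsilon)^T\nabla_v\varphi\big)$, where the transpose reflects the order of the tensor product in $\mathcal{J}^\varepsilon$ compared with \eqref{defDeps}. Expanding $\nabla_v\cdot\big((\mathscr{D}^\varepsilon)^T\nabla_v\varphi\big)$ by the Leibniz rule produces a term in $\mathscr{D}^\varepsilon$ contracted with $\nabla_v^2\varphi$ and a term in $\nabla_v\mathscr{D}^\varepsilon$ contracted with $\nabla_v\varphi$; the weak convergences \eqref{LinftyWLD} and \eqref{W11WLD} then give $\nabla_v\cdot\big((\mathscr{D}^\varepsilon)^T\nabla_v\varphi\big)\rightharpoonup\nabla_v\cdot\big(\mathscr{D}^T\nabla_v\varphi\big)$ weakly in $L^1_{\rm loc}(\R^+\times\R^d)$. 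Pairing this weakly-$L^1$-convergent sequence against the fixed function $f\in L^\infty(\R^+\times\R^d)$ provided by point $i)$ of Theorem~\ref{th:SCDC}, whose relevant values lie on the compact support of $\varphi$, yields $\mathcal{J}^\varepsilon(\varphi)\to\int_{\R^+}dt\int_{\R^d}dv\,f\,\nabla_v\cdot\big(\mathscr{D}^T\nabla_v\varphi\big)$, which is precisely \eqref{WLfD}. The weak $W^{1,1}_{\rm loc}$ regularity is exactly what makes the divergence-form limit well defined when tested against $f\in L^\infty$.
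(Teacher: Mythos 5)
Your overall architecture coincides with the paper's: everything is reduced to showing that $g^\varepsilon:=\nabla_v\cdot\big((\mathscr{D}^\varepsilon)^T\nabla_v\varphi\big)$ converges weakly in $L^1$, the statement \eqref{LinftyWLD} comes from a uniform $L^\infty$ bound plus weak--$\ast$ compactness, and the limit \eqref{WLfD} follows by pairing the weakly-$L^1$-convergent $g^\varepsilon$ against the fixed $f\in L^\infty(\R^+\times\R^d)$. Your Parseval/Cauchy--Schwarz derivation of the uniform $L^\infty$ bound on $\mathscr{D}^\varepsilon$ is correct and in fact more transparent than the paper's route through the pointwise decay \eqref{eqnBEE3}, and your final limit passage and equi-integrability remarks match the paper's use of Dunford--Pettis.

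The genuine gap is in your treatment of the gradient term for $d\geq 3$. Your plan requires the uniform bound $\sum_{k}|k|\,|\widehat{E}^\varepsilon(t_1,k)|\,|\widehat{E}^\varepsilon(t_2,-k)|\leq C$, while the only available information is \eqref{eqnBEE2}, i.e. $\||k|\widehat{E}^\varepsilon\|_{\ell^{1+d/2}}\leq c_0$, together with $\|\widehat{E}^\varepsilon\|_{\ell^{2}}\leq c_0$ (Remark~\ref{rk:EE} plus Parseval). H\"older with this pair needs $\tfrac{2}{d+2}+\tfrac{1}{2}\geq 1$, i.e. $d\leq 2$, and the ``full interpolation'' you invoke cannot rescue $d=3$: splitting $|k|=|k|^{\theta}|k|^{1-\theta}$ and interpolating each factor between the two bounds produces conjugate exponents whose reciprocals always sum to $\tfrac{2}{d+2}+\tfrac{1}{2}$, independently of $\theta$, so the bilinear sum never closes for $d\geq 3$. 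The failure is structural: estimating the product termwise on the Fourier side discards the duality structure (Hausdorff--Young is one-sided and lossy). The paper's proof never leaves physical space: the one-derivative contribution to $g^\varepsilon$ has the form $\fint dx\,E_i^\varepsilon(t_1,x)\,(\partial_{x_j}E^\varepsilon)(t_2,x+v(\sigma+\eta))$, to which H\"older on the torus applies with the exact dual pair $\big(1+d/2,\,1+2/d\big)$, after upgrading the underived factor from $W^{1,1+2/d}(\mathbb{T}^d)$ to $L^{1+d/2}(\mathbb{T}^d)$ by Sobolev embedding (valid for $d=3$; for $d=2$ the $L^2$--$L^2$ duality of Remark~\ref{rk:EE} suffices, and $d=1$ is simpler). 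Translation invariance of the $x$-average over the shift $v(\sigma+\eta)$ then makes this bound uniform in $v$ --- precisely the uniformity you wanted --- after which your equi-integrability and Dunford--Pettis steps go through verbatim. So your proposal is complete for $d\leq 2$, but the interpolation step it relies on is a dead end in exactly the dimensions where the lemma is delicate.
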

\begin{proof}
  Since $f \in L^\infty(\R^+ \times\R^d)$, to prove
  \eqref{WLfD}-\eqref{W11WLD} it is sufficient to show that the term
  \begin{equation*}
    \label{lita:rk:1}
    g^\varepsilon(t,v):=
    \nabla_v\cdot\big(\mathscr{D}^\varepsilon(t,v)^T\nabla_v
    \varphi(t,v)\big)\\
  \end{equation*}  
  converges weakly in $L^1(\R^+\times \R^d)$ as
  $\varepsilon\rightarrow 0$, i.e. for the weak topology
  $\sigma(L^1,L^\infty)$.  For this, we appeal to the Dunford--Pettis
  theorem.  First we show that $g^\varepsilon \in L^1(\R^+\times
  \R^d)$ uniformly in $\varepsilon$. For this, we use the fact that
  weak solutions of  \eqref{eqn:V2}-\eqref{eqn:GL2} are such that
  $E^\varepsilon \in L^\infty(\R^+;W^{1,1+2/d}(\mathbb{T}^d))$
  uniformly with respect to $\varepsilon$.  For $d=2$, we use the $L^2$
  duality and  $E^\varepsilon \in L^\infty(\R^+;L^2(\mathbb{T}^d))$
  uniformly in $\varepsilon$ (see Remark~\ref{rk:EE}). For $d=3$, we
  use the $L^p$-$L^{q}$ duality with $(p,q)=(1+2/d,1+d/2)$  and   the
  Sobolev embedding $W^{1,1+2/d}(\mathbb{T}^d)\hookrightarrow
  L^{1+d/2}(\mathbb{T}^d)$. Since the case $d=1$ is simpler (using the
  regularity $E^\varepsilon \in L^\infty(\R^+ \times \mathbb{T}^d)$
  uniformly in $\varepsilon$), we only give the proof for $d\geq
  2$. Then, for $d\geq 2$, using H\"older's inequality, we obtain
   \begin{align*}
     \|g^\varepsilon\|_{L^1(\R^+\times \R^d)}  \leq \ &  \bigg \|
     \frac{1}{\uptau}\sum_{i,j}
     \int_{0}^{\uptau}ds\int_{-\eta}^{s}d\sigma \fint dx\, (\sigma+\eta)
     \\ &  E_i^\varepsilon(t+ (s-\eta-\sigma)\varepsilon^2,x) \,(\partial_{x_j}
     E^\varepsilon)(t+s\varepsilon^2,x+v(\sigma+\eta)) \partial_{v_j}
     \varphi(t,v) \bigg\|_{L^1(\R^+\times \R^d)} \\ &
     +\bigg\|\sum_{i,j} \frac{1}{\uptau}
     \int_{0}^{\uptau}ds\int_{-\eta}^{s}d\sigma \fint dx\,(\sigma+\eta)
     \\ & E_j^\varepsilon(t+
     (s-\eta-\sigma)\varepsilon^2,x)E_j^\varepsilon(t+s\varepsilon^2,x+v(\sigma+\eta))
     \partial_{v_i v_j}^2 \varphi(t,v)
     \bigg\|_{L^1(\R^+\times \R^d)} \\ \leq \ &  (\uptau^2 + 3\uptau
     \eta
     +3\eta^2)(2\pi)^{-d}\|E^\varepsilon\|_{L^\infty(\R^+;L^{1+d/2}(\mathbb{T}^d))}
     \\ &\left\{ \|\nabla_x
     E^\varepsilon\|_{L^\infty(\R^+;L^{1+2/d}(\mathbb{T}^d))} \|
     \nabla_v \varphi\|_{L^1(\R^+\times \R^d)}  +
     \|E^\varepsilon\|_{L^\infty(\R^+; L^{1+2/d}(\mathbb{T}^d))} \|
     \nabla_v^2 \varphi\|_{L^1(\R^+\times \R^d)}  \right\} \\ < \ &\infty.
   \end{align*}
For showing uniform equi-integrability of the family $g^\varepsilon$, we take
$A \subset \R^+\times \R^d$ such that $|A| \leq \delta$, with $\delta$
small. Following the above computations we obtain
\begin{eqnarray*}
  \|g^\varepsilon\|_{L^1(A)}  &\leq& (\uptau^2 + 3\uptau \eta
  +3\eta^2)(2\pi)^{-d}\|E^\varepsilon\|_{L^\infty(\R^+;L^{1+d/2}(\mathbb{T}^d))}
  \\ && \left\{ \|\nabla_x
  E^\varepsilon\|_{L^\infty(\R^+;L^{1+2/d}(\mathbb{T}^d))} \| \nabla_v
  \varphi\|_{L^1(A)}  +
  \|E^\varepsilon\|_{L^\infty(\R^+;L^{1+2/d}(\mathbb{T}^d))} \|
  \nabla_v^2 \varphi\|_{L^1(A)}  \right\}
  \\ &\leq&  (\uptau^2 + 3\uptau \eta
  +3\eta^2)|A|(2\pi)^{-d}\|E^\varepsilon\|_{L^\infty(\R^+;L^{1+d/2}(\mathbb{T}^d))}
  \\ && \left\{ \|\nabla_x
  E^\varepsilon\|_{L^\infty(\R^+;L^{1+2/d}(\mathbb{T}^d))} \| \nabla_v
  \varphi\|_{L^\infty(\R^+\times \R^d)}  +
  \|E^\varepsilon\|_{L^\infty(\R^+;L^{1+2/d}(\mathbb{T}^d))} \|
  \nabla_v^2 \varphi\|_{L^\infty(\R^+\times \R^d)}  \right\}
  \\ &\lesssim& \delta,
\end{eqnarray*}  
which shows uniform equi-integrability of the family $g^\varepsilon$. It
remains to prove \eqref{LinftyWLD}.  Using Fourier series in the space
variables, the matrix \eqref{defDeps} rewrites as 
\begin{equation}
  \label{defDepsFourier}
\mathscr{D}^\varepsilon(t,v) = \frac{1}{\uptau}\int_0^\uptau ds
\int_{-\eta}^{s} d\sigma \sum_{k\in \Z^d}  e^{\,{\rm i}
  k\cdot v (\sigma+\eta)}
\widehat{E}^\varepsilon(t+s\varepsilon^2,k)\otimes
\widehat{E}^\varepsilon(t+
(s-\eta-\sigma)\varepsilon^2,-k).  
\end{equation}
From \eqref{eqnBEE2}, there exists $\lambda >0$ such that
\begin{equation}
  \label{eqnBEE3}
  |\widehat{E}^\varepsilon(t,k)|\leq c_0 (1+|k|)^{-(1+\lambda)}.
\end{equation}
Using \eqref{defDepsFourier}-\eqref{eqnBEE3} and weak compactness, we
obtain \eqref{LinftyWLD}, which ends the proof of Lemma~\ref{lem:QAL}.
\end{proof}

If we now assume
\begin{equation}
  \label{limM0AL}
   \lim_{\varepsilon \rightarrow 0}\, \mathcal{M}^\varepsilon(\varphi) =0,
\end{equation}
then, using Lemma~\ref{lem:QAL} and \eqref{T1AL}, we obtain from
\eqref{lita:eqn:3} and \eqref{T2-DecQM} the diffusion equation
\eqref{def:DifEq} with 
\begin{eqnarray}
  \mathscr{D}(t,v)&:=&    \lim_{\varepsilon \rightarrow
    0}\frac{1}{\uptau} \int_{0}^{\uptau}ds\int_{-\eta}^{s}d\sigma
  \fint dx\,
  E^\varepsilon(t+s\varepsilon^2,x)
  \otimes E^\varepsilon(t+
  (s-\eta-\sigma)\varepsilon^2,x-v(\sigma+\eta)) 
 \label{def:CDif:lita}\\
 &=& \lim_{\varepsilon \rightarrow 0} \frac{1}{\uptau}\int_0^\uptau ds
 \int_{-\eta}^{s} d\sigma \sum_{k\in \Z^d}  e^{\,{\rm i}
   k\cdot v (\sigma+\eta)}
 \widehat{E}^\varepsilon(t+s\varepsilon^2,k)
 \otimes\widehat{E}^\varepsilon(t+
 (s-\eta-\sigma)\varepsilon^2,k)^\ast. \nonumber
\end{eqnarray}

Few remarks are now in order.

\begin{remark}{(open issues)}
  \label{RKOI2}
  \begin{itemize}
  \item[1.] In order to justify  \eqref{limM0AL}, we have to prove
    that the term
    \[
    \nabla_v \cdot \Big( E^\varepsilon(t+
    (s-\eta-\sigma)\varepsilon^2,x)\otimes
    E^\varepsilon(t+s\varepsilon^2,x+v(\sigma+\eta)) \nabla_v
    \varphi(t,v)\Big),
    \]
    converges strongly in $L^1(\R_t^+ \times \R_s^+ \times \R_\sigma^+
    \times Q)$ as $\varepsilon \rightarrow 0$. 
  \item[2.] Show that $\mathcal{T}_1^\varepsilon
    \rightharpoonup 0$ in $\mathcal{D}'(\R^+\times \R^d)$ remains
    an open issue. Nevertheless, we may expect that there exist some mixing-type hypotheses, which
    could justify such limit.
 \item[3.]  Taking $\hat{\theta}=\varepsilon^2 \uptau$ in equation
   \eqref{lita:eqn:T1}, the term $ \mathcal{T}_{1}^\varepsilon$ is
   reminiscent of the term \eqref{dduhamel3} appearing in the case of
   the non-self-consistent stochastic electric field (see
   Section~\ref{s:NSCSC}).  Let us note that in the stochastic case,
   without additional regularity assumptions on weak solutions, this
   term vanishes by using a time decorrelation property between
   $E^\varepsilon$ and $f^\varepsilon$ and by using the fact that 
   the stochastic average of $E^\varepsilon$ vanishes.
 \item[4.] The parameter $\uptau$ is reminiscent of the  autocorrelation time of particles $\uptau$, which is
   introduced in case of  the non-self-consistent stochastic
   electric field (see Section~\ref{s:NSCSC}).
  \end{itemize}
\end{remark}

{\bf \noindent An explicit form of the diffusion matrix in the
  non-self-consistent deterministic case}.  \\
\newline
\noindent Without being able to prove some time decorrelation
properties for the electric field $E^\varepsilon$, it is difficult to
deduce the structure of the diffusion matrix $\mathscr{D}$.  However,
as in Section~\ref{ss:GIA}, we can design a non-self-consistent
deterministic electric field by using the WKB expansion \eqref{WKB_exp} to
obtain an explicit  diffusion matrix.  This gives a formal example for
which the diffusion matrix \eqref{def:CDif:lita} is not zero and
non-negative.  Using the  WKB approximation \eqref{WKB_app}, the notation 
  \[
  \Delta \Omega:= \partial_t\Omega(t,k)-  k\cdot v,
  \]
  and the parity of functions $\Omega(t,k)$ and $\widehat{E}_0(t,k)$
  in the variable $k$, we obtain from similar computations leading to
  \eqref{twqld}, 
  \begin{align}
    \mathscr{D}(t,v) = &  \lim_{\varepsilon \rightarrow 0}
    \frac{1}{\uptau}\int_0^\uptau ds \int_{-\eta}^{s} d\sigma
    \sum_{k\in \Z^d}  e^{ \,{\rm i} k\cdot v
      (\sigma+\eta)}
     \widehat{E}^\varepsilon(t+s\varepsilon^2,k)
    \otimes\widehat{E}^\varepsilon(t+
    (s-\eta-\sigma)\varepsilon^2,k)^\ast \nonumber\\ =&
    \sum_{k\in \Z^d} \frac{1}{\uptau}\int_0^\uptau
    ds\int_{0}^{s+\eta}d\sigma  e^{-{\rm i}\sigma  \Delta \Omega(t,k)}
    \widehat{E}_0(t,k) \otimes  \widehat{E}_0(t,k)   \nonumber\\ =&
    \sum_{k\in \Z^d}  \left(\frac{1}{{\rm i} \Delta \Omega} -
    \frac{e^{-{\rm i}\eta  \Delta \Omega}}{{\rm i} \Delta \Omega} \,
    \frac{1-e^{-{\rm i}\uptau  \Delta \Omega}}{{\rm i}\uptau  \Delta
      \Omega} \right) \widehat{E}_0(t,k) \otimes  \widehat{E}_0(t,k)
    \nonumber\\ =&\sum_{k\in \Z^d} \frac{\sin( \uptau\Delta
      \Omega/2)}{\uptau\Delta \Omega/2} \frac{\sin(
      (\uptau/2+\eta)\Delta \Omega)}{\Delta \Omega} \widehat{E}_0(t,k)
    \otimes  \widehat{E}_0(t,k).\label{twqld-2}
  \end{align}
  At this point we obtain some important limits with respect to the
  parameters $\eta$ and $\uptau$.  The parameter $\uptau$ is the same
  as the one that we have  defined in Section~\ref{s:hwtr} and used in
  Section~\ref{s:NSCSC}. Hence parameters $\uptau$ and $\eta$ can be
  seen as normalized particle autocorrelation times.   

  The first significant limit is $\eta \rightarrow + \infty$.  Indeed
  using the limit $  \lim_{\eta\rightarrow +\infty } {\sin(\eta
    \tau)}/{\tau} = \pi \delta(\tau) $ in $\mathcal{D}'(\R)$, and
  taking the limit $\eta \rightarrow + \infty$ in \eqref{twqld-2}, we
  recover the same diffusion matrix \eqref{SCDQL} that we have
  obtained with the global-in-time approach of
  Section~\ref{ss:GIA}. Therefore, we  recover the quasilinear
  diffusion matrix \eqref{DQLTXTB} too. In a sense, the limit $\eta
  \rightarrow + \infty$ corresponds to take into account all the past
  of the distribution function
  and particularly the initial
  condition as it was done in the global-in-time approach of
  Section~\ref{ss:GIA}. Therefore it is consistent to obtain the
  result of the global-in-time approach by taking the limit $\eta
  \rightarrow + \infty$ in the local-in-time approach.

  The second  significant limit is $\eta \rightarrow 0$.  Indeed for
  $\eta=0$, we obtain from \eqref{twqld-2}, the following non-negative
  diffusion matrix,
  \begin{equation}
    \label{DifMGen}
  \mathscr{D}(t,v) = \frac{\uptau}{2}\sum_{k\in \Z^d}
  \left(\frac{\sin\big ( \frac{\uptau}{2}
    \big(\partial_t\Omega(t,k)-k\cdot v\big) \big)}
       {\frac{\uptau}{2} \big(\partial_t\Omega(t,k)-
         k\cdot v\big)}\right)^2 \widehat{E}_0(t,k) \otimes
       \widehat{E}_0(t,k).
  \end{equation}
  This diffusion matrix seems more regular in velocity than the
  quasilinear diffusion matrix \eqref{DQLTXTB}.  This regularity improvement
  in velocity is reminiscent of the finite-$\uptau$ effect that we observe 
  in the framework of the  non-self-consistent stochastic electric field  (see
  Section~\ref{s:NSCSC}) and also in the resonance
  broadening like theory (see Section~\ref{ss:RBT}). 

  The last significant limit is to  keep $\eta$ finite and to pass to
  the limit $\uptau \rightarrow +\infty$ in \eqref{twqld-2} or
  \eqref{DifMGen}. In this limit, we also recover the quasilinear
  diffusion matrix  \eqref{SCDQL} or \eqref{DQLTXTB}, and Remark~\ref{rem:DEGPHI}
  still holds true.  This result is
  consistent with the developements of Section~\ref{ss:QLT} for the
  non-self-consistent stochastic electric field.

  \begin{remark} As in Remark~\ref{rem:DEGPHI}, if  the electric field $E^\varepsilon$ derives from
    a potential $\Phi^\varepsilon$, then in the diffusion matrices \eqref{twqld-2} and \eqref{DifMGen}
    we should replace the matrix $\widehat{E}_0(t,k) \otimes \widehat{E}_0(t,k)$ by the matrix
    $|\widehat{E}_0(t,k)|^2\, k \otimes k / |k|^2$. 
  \end{remark}

\section{The non-self-consistent stochastic case}
\label{s:NSCSC}

In this section we deal with the non-self-consistent stochastic
case. Before stating our main Theorem~\ref{thm:dloveiwtr} in
Section~\ref{s:mthm}, we start by describing the features of the
stochastic electric field in the following section.

\subsection{The turbulent electric field}
\label{ss:tef}
Here, electrostatic turbulence is modeled through the random vector
field  $E^\varepsilon$.  Let $(\Upomega, \mathcal{F}, \mathbb{P})$ be a
probability space, with $\mathbb{P}$ being a $\sigma$-finite measure.  A
random vector $F$ is real vector-valued function defined on
$\Upomega$. When $F:\Upomega \rightarrow \R^d$ is an integrable random
vector, its expectation is given by
\begin{equation*}
\label{def:expectation}
\mathbb{E}[F]= \int_{\Upomega} {\rm d}\mathbb{P}(\upomega)\, F(\upomega).
\end{equation*}  
From considerations of Section~\ref{ss:dp}, the turbulent electric
field $E^\varepsilon$ has two time scales, one slow and the other fast. 
We then choose a turbulent electric
field $E^\varepsilon$ given by
\begin{equation}
  E^\varepsilon(t,x)=E(t,t/\varepsilon^2,x; \upomega),
  \label{def:Eepsilon}
\end{equation}
where, the integrable random vector field $E$ satisfies the following
``stochastic'' assumptions:
\begin{itemize}
\item[$({\rm H}1)$:] The random vector field $E$ is centered, i.e.
  \begin{equation*}
    \label{H1}
      \mathbb{E}[E(t,\tau,x)] = 0, \quad \forall (t,\tau,x)\in \R^+\times
      \R^+ \times \mathbb{T}^d.
  \end{equation*}
\item[$({\rm H}2)$:] There exists a constant $\uptau>0$ such that
  for every $x, y \in\R^d$ and for every $\tau, \sigma \in\R^+$
  the electric fields  $E(t,\tau,x)$ and
  $E(s,\sigma,y)$  are independent random vector fields as soon as
  $|\tau-\sigma|\geq \uptau$. The autocorrelation time $\uptau$ is supposed
  fixed and finite, hence independent of $\varepsilon$.
\item[$({\rm H}3)$:] There exists a matrix-valued function
  $\mathcal{R}_\uptau:\R^+\times\R^+\times\R\times \mathbb{T}^d\rightarrow
  \R^{2d}$, called the autocorrelation matrix or the Reynolds electric stress tensor,  such that
  \begin{equation}
    \label{H3}
    \mathbb{E}[E(t,\tau,x)\otimes E(s,\sigma,y)]
    =\mathcal{R}_\uptau(t,s,\tau-\sigma,x-y).
  \end{equation}
\end{itemize}

Hypothesis $({\rm H}1)$ sets the stochastic average of $E^\varepsilon$
to zero, which is standard and not restrictive. Assumption $({\rm
  H}2)$ means that the turbulent electric field $E^\varepsilon$ is
time decorrelated on a time scale $\varepsilon^2\uptau$. Assumption
$({\rm H}2)$ can be seen as a hypothesis of propagation of
``stochasticity'' or propagation of independence of random vector
fields.  Therefore, two evaluations in time of the electric field,
separated by  a lapse of time larger than $\varepsilon^2\uptau$, are
independent random vector fields.  Hypothesis $({\rm H}3)$ is the standard
spatio-temporal homogeneity property of the turbulence,  i.e. the
spatio-temporal autocorrelation of the electric field  $E^\varepsilon$
is invariant under space and time translations. These assumptions are
similar to the ones of \cite{PV03}.  

\begin{remark}
  In the nonlinear regime, the property of time decorrelation seems to
  be the cornerstone of the diffusion process, for both the
  self-consistent and the non-self-consistent setting  \cite{Dav72,
    AAPSS75, Bal05}. An open and very difficult problem remains to
  show mathematically such time decorrelation property from only the
  deterministic Vlasov-Poisson system \eqref{eqn:V}-\eqref{eqn:GL} and
  random initial data $f_0$. In a sense, this is what has been shown
  numerically in \cite{BEEB11, BEEB11-2}. This property of propagation
  of ``stochasticity'' is reminiscent of the property of propagation
  of chaos in statistical mechanics. 
\end{remark}

In order to justify rigorously the diffusion limit, the  stochastic
electric field requires the following regularity assumptions.
\begin{itemize}
\item[$({\rm H}4)$:] The regularity of $E$ is such that
  \begin{equation*}
    \label{H4}
    E \in L^\infty\big(\R^+\times \R^+;
    W^{2,\infty}(\mathbb{T}^d)\big), \quad \mbox{ and } \quad
    \mathbb{E}\left[ \| E\|_{ L^\infty(\R^+\times \R^+;
        W^{2,\infty}(\mathbb{T}^d))}^3\right] =:C_E< \infty.
  \end{equation*}
\end{itemize}  
Assumption $({\rm H}4)$ imposes the regularity (especially in space)
of the random vector field $E$.
It is worthwhile to end this section by giving an explicit example of a random field
$E$, which satisfies assumptions $({\rm H}1)$-$({\rm H}4)$. Following
the spirit of Example~2 in \cite{PV03}, we construct in Appendix~\ref{ACRE}
a random vector field $E$ satisfying these requirements.
  
\subsection{Main theorem}
\label{s:mthm}

Concerning the non-self-consistent stochastic case, we establish

\begin{theorem}
\label{thm:dloveiwtr}
Let $E$ be an integrable random vector field satisfying assumptions
$({\rm H}1)$-$({\rm H}4)$, and let $E^\varepsilon$ be given by
\eqref{def:Eepsilon}.  Let $\{f_0^\varepsilon\}_{\varepsilon>0}$ be a
sequence of independent random non-negative initial data  and $C_0$ be a
positive constant such that for a.e. $\upomega \in \Upomega$,
$\|f_0^\varepsilon \|_{L^1(Q)}+\|f_0^\varepsilon \|_{L^\infty(Q)} \leq C_0<\infty$.
Let $\mathscr{D}_\uptau=\mathscr{D}_\uptau(t,v)$ be the matrix-valued function defined
by
\begin{equation}
\label{def:CoefDiff}
\mathscr{D}_\uptau(t,v)= \int_0^\uptau d\sigma\,\mathcal{R}_\uptau(t,t,\sigma,\sigma v), 
\end{equation}
the properties of which, are described in
Proposition~\ref{prop:Dprop}.  Let $f^\varepsilon$ be the unique weak
solution of  the Vlasov equation \eqref{eqn:V2}, with  initial data
${f^\varepsilon}_{|_{t=0}}=f_0^\varepsilon$.\\  Then up to extraction
of a subsequence, $\mathbb{E}[f_0^\varepsilon]$ converges in
$L^\infty(Q)$ weak--$\ast$ to a function $f_0\in L^1\cap L^\infty(Q)$,
$\mathbb{E}[f^\varepsilon]$  converges in $L^\infty(\R^+;L^\infty(Q))$
weak--$\ast$ to a function $f\in L^\infty(\R^+; L^1\cap
L^\infty(\R^d))$, and $\mathbb{E}[\fint dx\, f^\varepsilon]$
converges in $L^\infty(\R^+;L^\infty(\R^d))$ weak--$\ast$ to
$f$. Moreover $\mathbb{E}[\fint dx\, f^\varepsilon]$  converges in
$\mathscr{C}(0,T;L^p(\R^d)-weak)$ to $f$, for $1<p<\infty$ and for all
$T>0$. The limit point $f=f(t,v)$ is solution of the following
diffusion equation in the sense of distributions:
\begin{eqnarray}
&&\partial_t f -\nabla_v \cdot(\mathscr{D}_\uptau\nabla_v f) =0, \quad \mbox{
    in } \  \mathcal{D}'(\R^+\times \R^d), \label{def:diffeq} \\ &&
  {f}_{|_{t=0}} = \fint dx\, f_0. \label{def:condinit} \nonumber
\end{eqnarray}  
\end{theorem}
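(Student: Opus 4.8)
The plan is to derive the diffusion equation from the weak, space-averaged formulation of the Vlasov equation by computing the $\varepsilon\to0$ limit of the expected flux $\mathbb{E}[\fint dx\, E^\varepsilon f^\varepsilon/\varepsilon]$ through a \emph{two-step} Duhamel expansion; the stochastic hypotheses $({\rm H}1)$--$({\rm H}4)$ are exactly what turns the formal manipulations of Section~\ref{ss:LITA} into rigorous statements. First I would record the a priori bounds: since $f^\varepsilon$ solves a linear transport equation with measure-preserving characteristic flow, $\|f^\varepsilon(t)\|_{L^p(Q)}\le\|f_0^\varepsilon\|_{L^p(Q)}\le C_0$ for $1\le p\le\infty$, uniformly in $\varepsilon$ and $\upomega$, so $\mathbb{E}[f^\varepsilon]$ and $\mathbb{E}[\fint dx\, f^\varepsilon]$ are bounded in $L^\infty(\R^+;L^1\cap L^\infty)$. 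Weak-$\ast$ compactness, together with the De~la~Vall\'ee-Poussin/Dunford-Pettis argument already used for Theorem~\ref{th:SCDC}, extracts limits $f_0$ and $f$ along a subsequence. Passing to the limit in the leading balance of \eqref{eqn:V2} gives $v\cdot\nabla_x f=0$, so Lemma~\ref{lem:EFF} forces $f=f(t,v)$ to be $x$-independent, which identifies the three weak-$\ast$ limits with the same $f$.

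The heart of the proof is the identification of the flux. Testing the space-averaged Vlasov equation against $\varphi\in\mathcal{D}(\R^+\times\R^d)$, the only nontrivial term is $\langle\mathbb{E}[\fint dx\, E^\varepsilon f^\varepsilon/\varepsilon],\nabla_v\varphi\rangle$. I would expand $f^\varepsilon(t)$ by Duhamel over a window of length $\varepsilon^2\uptau$, namely $f^\varepsilon(t)=S^\varepsilon_{\varepsilon^2\uptau}f^\varepsilon(t-\varepsilon^2\uptau)-\varepsilon^{-1}\int_{t-\varepsilon^2\uptau}^t S^\varepsilon_{t-s}(E^\varepsilon(s)\cdot\nabla_v f^\varepsilon(s))\,ds$. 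Multiplied by $E^\varepsilon(t)/\varepsilon$, the free-flow term pairs $E^\varepsilon(t)$ (fast time $t/\varepsilon^2$) with a functional of the field history up to fast time $t/\varepsilon^2-\uptau$; by $({\rm H}2)$ these are independent, and since $\mathbb{E}[E^\varepsilon]=0$ by $({\rm H}1)$ the term vanishes (the rigorous counterpart of \eqref{T1AL}). After $s=t-\varepsilon^2\sigma$ the remaining term is $-\int_0^\uptau d\sigma\fint dx\,\mathbb{E}[E^\varepsilon(t,x)\otimes E^\varepsilon(t-\varepsilon^2\sigma,x-v\sigma)]\,\nabla_v f^\varepsilon(t-\varepsilon^2\sigma,x-v\sigma,v)$, in which field and $f^\varepsilon$ are still at overlapping fast times. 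I would therefore apply a \emph{second} Duhamel step to $f^\varepsilon(t-\varepsilon^2\sigma)$, retarding it by a further $\varepsilon^2\uptau$: its free-flow part involves $f^\varepsilon$ at fast time $\le t/\varepsilon^2-\sigma-\uptau$, which by $({\rm H}2)$ is independent of both field factors, so the expectation factorizes into the field correlation times $\mathbb{E}[\nabla_v f^\varepsilon]$. By the homogeneity $({\rm H}3)$ the field correlation equals $\mathcal{R}_\uptau(t,t-\varepsilon^2\sigma,\sigma,v\sigma)\to\mathcal{R}_\uptau(t,t,\sigma,\sigma v)$, which is $x$-independent so the torus average is trivial, while $\mathbb{E}[f^\varepsilon]\to f$ with $f$ $x$-independent makes the two spatial shifts disappear. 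Integrating over $\sigma\in[0,\uptau]$ reproduces exactly $\mathscr{D}_\uptau(t,v)=\int_0^\uptau\mathcal{R}_\uptau(t,t,\sigma,\sigma v)\,d\sigma$ and gives the flux limit $-\mathscr{D}_\uptau\nabla_v f$; inserting this into the weak formulation yields \eqref{def:diffeq}.

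The main obstacle, and where $({\rm H}4)$ is consumed, is the control of the two remainders. Since only $L^p$ bounds are available for $f^\varepsilon$ and none for $\nabla_v f^\varepsilon$, I would integrate by parts in $v$ to move the velocity derivative off $f^\varepsilon$; it then hits the shifted field $E^\varepsilon(\cdot,x-v\sigma)$, producing a factor $\sigma\,\nabla_x E^\varepsilon$ bounded by $\uptau$ and the $W^{2,\infty}$ norm in $({\rm H}4)$, plus the smooth $\nabla_v\varphi$. The second-Duhamel remainder carries \emph{three} field factors and, after the change $ds'=\varepsilon^2 d\sigma'$, an overall factor $\varepsilon$; its expectation is controlled by $\mathbb{E}[\|E\|^3_{W^{2,\infty}}]=C_E<\infty$, so it is $O(\varepsilon)$ and vanishes — precisely why a cubic moment is imposed. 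I expect the genuinely delicate point to be the rigorous factorization of expectations: one must check that $f^\varepsilon$ at the retarded time is a measurable functional of the field history and the independent initial datum over a fast-time interval separated by at least $\uptau$ from \emph{both} field arguments, so that $({\rm H}2)$ may be used as independence of the retarded past from the two later field values, and that replacing $\mathbb{E}[\nabla_v f^\varepsilon(t-\varepsilon^2(\sigma+\uptau))]$ by $\nabla_v f(t,v)$ produces a vanishing error — the latter following from weak-$\ast$ convergence once the $O(\varepsilon^2)$ slow-time and free-flow shifts are transferred onto the smooth, compactly supported test function. Finally, the improved convergence in $\mathscr{C}(0,T;L^p(\R^d)-\text{weak})$ follows from the uniform bound on the flux, hence on $\partial_t\mathbb{E}[\fint dx\, f^\varepsilon]$ in a space of distributions, which gives time equicontinuity in the weak topology and, combined with the uniform $L^p$ bound, a generalized Arzel\`a--Ascoli extraction.
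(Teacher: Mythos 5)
Your proposal is correct and follows essentially the same route as the paper: the a priori bounds and ergodicity argument of Proposition~\ref{prop:WPV}, the double Duhamel iteration of \cite{PV03} with the first term killed by the decorrelation Proposition~\ref{decoref} and $({\rm H}1)$, factorization of the quadratic term via $({\rm H}2)$--$({\rm H}3)$ leading to $\mathscr{D}_\uptau$, integration by parts onto the test function (the operator $\Theta_t^\varepsilon$) paired with the weak--$\ast$ limit of $\mathbb{E}[f^\varepsilon]$, the $O(\varepsilon)$ cubic remainder controlled by the third moment in $({\rm H}4)$, and the Ascoli--Arzel\`a argument for the $\mathscr{C}(0,T;L^p(\R^d)\mbox{-weak})$ convergence. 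You even correctly identify the two delicate points (why a single Duhamel step fails and why the cubic moment is needed), exactly as in Sections~\ref{ss:SIDF}--\ref{ss:DIDF}.
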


The Proof of Theorem~\ref{thm:dloveiwtr} is postponed to
Section~\ref{s:proof}. General properties of the diffusion matrix $\mathscr{D}_\uptau$ and the
autocorrelation matrix $\mathcal{R}_\uptau$ of Theorem~\ref{thm:dloveiwtr}
are stated in 

\begin{proposition}{(properties of  the diffusion matrix $\mathscr{D}_\uptau$)}
  \label{prop:Dprop}
  Under  assumptions $({\rm H}1)$-$({\rm H}4)$, the matrix-valued
  function $\mathcal{R}_\uptau:\R^+\times\R^+\times\R\times
  \mathbb{T}^d\rightarrow \R^{2d}$, and the diffusion matrix
  $\mathscr{D}_\uptau:\R^+\times\R^d\rightarrow \R^{2d}$ satisfy the
  following properties:

\begin{itemize}
\item[$i)$] $\mathcal{R}_\uptau(t,t,\tau,x)=\mathcal{R}_\uptau^T(t,t,-\tau,-x)$, and
  $\ \mathcal{R}_\uptau(t,t,\tau,x+2\pi k)=\mathcal{R}_\uptau(t,t,\tau,x), \quad \forall
  k\in\Z$.
\item[$ii)$]  $\mathcal{R}_\uptau\in L^\infty(\R^+\times\R^+\times\R;
  W^{2,\infty}(\mathbb{T}^d))$, and $\ {\rm supp}(\mathcal{R}_\uptau) \subset
  \R^+\times\R^+ \times [-\uptau, \uptau]\times\mathbb{T}^d$.
\item[$iii)$] $\mathscr{D}_\uptau\in
  L^\infty(\R^+; W^{2,\infty}(\R^d))$, and  $\ {\rm supp}(\mathscr{D}_\uptau)
  \subset \R^+\times\R^d$.
\item [$iv)$] The  symmetric part of $\mathscr{D}_\uptau$ is non-negative,
  i.e.  $ X^T\mathscr{D}_\uptau X \geq 0, \  \forall \in X \in \R^d$.
\end{itemize}
\end{proposition}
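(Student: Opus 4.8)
The plan is to derive the algebraic and support statements $i)$--$iii)$ directly from the definition \eqref{H3} together with hypotheses $({\rm H}1)$, $({\rm H}2)$, $({\rm H}4)$, and then to treat the positivity $iv)$, which is the only substantial point, by a stationarity argument.

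For $i)$, I would write \eqref{H3} with $s=t$ as $\mathcal{R}_\uptau(t,t,\tau,x)=\mathbb{E}[E(t,a,p)\otimes E(t,b,q)]$ for any $a-b=\tau$, $p-q=x$, and use $(\xi\otimes\zeta)^T=\zeta\otimes\xi$: transposing and relabelling the two slots gives $\mathcal{R}_\uptau(t,t,\tau,x)^T=\mathbb{E}[E(t,b,q)\otimes E(t,a,p)]=\mathcal{R}_\uptau(t,t,-\tau,-x)$, which is the claimed relation. The $2\pi$-periodicity in $x$ is inherited directly from the $2\pi$-periodicity of $x\mapsto E(t,\tau,x)$ on $\mathbb{T}^d$. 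For $ii)$, the regularity $\mathcal{R}_\uptau\in L^\infty(\R^+\times\R^+\times\R;W^{2,\infty}(\mathbb{T}^d))$ follows by differentiating \eqref{H3} in the space variable under the expectation sign (licit by $({\rm H}4)$) and bounding each spatial derivative of order $\leq 2$ by $\mathbb{E}[\|E\|_{W^{2,\infty}}^2]<\infty$, which is finite by Hölder's inequality and $({\rm H}4)$. The support statement is where $({\rm H}1)$ and $({\rm H}2)$ enter: as soon as $|\tau-\sigma|\geq\uptau$ the fields $E(t,\tau,x)$ and $E(t,\sigma,y)$ are independent and centered, so $\mathbb{E}[E(t,\tau,x)\otimes E(t,\sigma,y)]=\mathbb{E}[E(t,\tau,x)]\otimes\mathbb{E}[E(t,\sigma,y)]=0$, whence $\mathcal{R}_\uptau(t,t,\tau-\sigma,\cdot)$ vanishes for $|\tau-\sigma|\geq\uptau$.

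Property $iii)$ then follows from $ii)$ by differentiating \eqref{def:CoefDiff} under the finite $\sigma$-integral: since $\partial_{v_j}$ acts through the argument $\sigma v$, the chain rule produces bounded factors $\sigma^{|\alpha|}\leq\uptau^{|\alpha|}$, so $\|\mathscr{D}_\uptau\|_{L^\infty(\R^+;W^{2,\infty}(\R^d))}\lesssim \uptau(1+\uptau^2)\,\|\mathcal{R}_\uptau\|_{L^\infty(\R^+\times\R^+\times\R;W^{2,\infty}(\mathbb{T}^d))}<\infty$; the support statement is trivial, since $\mathscr{D}_\uptau$ is defined on all of $\R^+\times\R^d$ and, in contrast to $\mathcal{R}_\uptau$, carries no compact-support restriction.

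The crux is $iv)$. For fixed $t,v$ and a vector $X\in\R^d$, I would introduce the real scalar process $\psi(\sigma):=X\cdot E(t,\sigma,\sigma v)$ and set $C(\sigma):=X^T\mathcal{R}_\uptau(t,t,\sigma,\sigma v)X$. Homogeneity \eqref{H3} gives $\mathbb{E}[\psi(\sigma_1)\psi(\sigma_2)]=X^T\mathcal{R}_\uptau(t,t,\sigma_1-\sigma_2,(\sigma_1-\sigma_2)v)X=C(\sigma_1-\sigma_2)$, so $\psi$ is wide-sense stationary and $C$ is its autocovariance; by $i)$ the scalar $C$ is even, and by $ii)$ it is supported in $[-\uptau,\uptau]$. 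Consequently
\[
X^T\mathscr{D}_\uptau(t,v)X=\int_0^\uptau C(\sigma)\,d\sigma=\tfrac12\int_{-\uptau}^{\uptau}C(\sigma)\,d\sigma=\tfrac12\int_{\R}C(\sigma)\,d\sigma .
\]
To see that the last integral is non-negative I would avoid Bochner's theorem and argue directly from the positivity of a variance: for every $T>0$, Fubini (licit since $\mathbb{E}[\|E\|^2]<\infty$) yields
\[
0\leq \frac1T\,\mathbb{E}\Big[\Big(\int_0^T\psi(s)\,ds\Big)^2\Big]=\frac1T\int_0^T\!\!\int_0^T C(s-s')\,ds\,ds',
\]
and since $C$ is integrable with support in $[-\uptau,\uptau]$ the right-hand side converges to $\int_{\R}C(\sigma)\,d\sigma$ as $T\to+\infty$ (the Cesàro mean of an integrable autocovariance identifies with its integral). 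Hence $\int_\R C\geq 0$ and $X^T\mathscr{D}_\uptau X\geq 0$, which is the non-negativity of the symmetric part. The main obstacle is precisely this last step: the sign of the time-integrated autocorrelation is not visible from \eqref{def:CoefDiff} directly and must be extracted from the stationarity of $\psi$ combined with the evenness and compact support established in $i)$ and $ii)$; it is the finite decorrelation time $\uptau$ of $({\rm H}2)$ that makes the Cesàro limit coincide with $\int_\R C$.
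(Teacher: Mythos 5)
Your proposal is correct and follows essentially the same route as the paper's proof: items $i)$--$iii)$ are read off from $({\rm H}1)$--$({\rm H}4)$ exactly as in the text, and for $iv)$ you use the evenness and compact support of $\sigma\mapsto X^T\mathcal{R}_\uptau(t,t,\sigma,\sigma v)X$ to write $X^T\mathscr{D}_\uptau X=\tfrac12\int_\R C$ and identify this integral with a limit of normalized variances of time integrals of the stationary scalar process $X\cdot E(t,\sigma,\sigma v)$, which is precisely the paper's argument. The only cosmetic differences are that you prove the Ces\`aro-mean identity by hand (with the one-sided window $[0,T]$ and the triangular kernel $1-|u|/T$) where the paper invokes Lemma~3.1 of \cite{PV03} with the symmetric window $[-R,R]$.
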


\begin{proof}
We start with property $i)$. Using $({\rm H}3)$, we obtain
\[
\mathcal{R}_{\uptau\,ij}(t,t,\tau-\sigma,x-y) =
\mathbb{E}[E_i(t,\tau,x)E_j(t,\sigma,y)]=
\mathbb{E}[E_j(t,\sigma,y)E_i(t,\tau,x)]=\mathcal{R}_{\uptau\,ji}(t,t,-(\tau-\sigma),-(x-y)), 
\]
and for all $k,k' \in \Z,$
\begin{eqnarray*}
\mathcal{R}_\uptau(t,t,\tau-\sigma,x-y) &=& \mathbb{E}[E(t,\tau,x)\otimes
  E(t,\sigma,y)]= \mathbb{E}[E(t,\tau,x+2\pi k)\otimes
  E(t,\sigma,y+2\pi k')]\\ &=&\mathcal{R}_{\uptau\,ij}(t,t,\tau-\sigma,x-y +2\pi(k-k')).
\end{eqnarray*}
The regularity  property $ii)$ for $\mathcal{R}_\uptau$ comes immediately from
the regularity assumption $({\rm H}4)$ for $E$ and the definition $({\rm H}3)$
for $\mathcal{R}_\uptau$. The regularity
property  $iii)$ for $\mathscr{D}_\uptau$ is the straight consequence of the
definition \eqref{def:CoefDiff} for  $\mathscr{D}_\uptau$ and  the regularity
property $ii)$ for $\mathcal{R}_\uptau$. The
support of $\mathscr{D}_\uptau$ is obvious, while the support of
$\mathcal{R}_\uptau$ results from assumptions $({\rm H}1)$-$({\rm
  H}3)$. Indeed, if $|\tau-\sigma|> \uptau$, then  $({\rm H}1)$-$({\rm
  H}3)$ imply that
$\mathcal{R}_\uptau(t,t,\tau-\sigma,x-y)=\mathbb{E}[E(t,\tau,x)\otimes
  E(t,\sigma,y)]=\mathbb{E}[E(t,\tau,x)]\otimes
  \mathbb{E}[E(t,\sigma,y)]=0$.  We end with the property $iv)$. Using properties
$i)$ and $ii)$, we obtain
\begin{eqnarray}
\label{eqn:PD}  
X^T\mathscr{D}_\uptau X &=&\sum_{i,j} X_iX_j\int_0^\uptau d\sigma\,
\mathcal{R}_{\uptau\,ij}(t,t,\sigma,\sigma v)= \frac{1}{2}\sum_{i,j}
X_iX_j\int_{-\uptau}^\uptau  d\sigma\, \mathcal{R}_{\uptau\,ij}(t,t,\sigma,\sigma
v)\nonumber\\ &=&\frac{1}{2}\sum_{i,j}
X_iX_j\int_{-\infty}^{+\infty}d\sigma\,\mathcal{R}_{\uptau\,ij}(t,t,\sigma,\sigma
v).
\end{eqnarray}
Using Lemma~3.1 of \cite{PV03}, which states that for all $g\in
L^1(\R)$ we have
\[
\int_{-\infty}^{+\infty} g(s)ds = \lim_{R\rightarrow
  +\infty}\frac{1}{2R}\int_{-R}^Rds \int_{-R}^R dt \, g(s-t),
\]
we obtain from \eqref{eqn:PD},
\begin{eqnarray*}
  X^T\mathscr{D}_\uptau X &=& \lim_{R\rightarrow
    +\infty}\frac{1}{4R}\sum_{i,j} X_iX_j\int_{-R}^{R}
  d\sigma\int_{-R}^{R} d\theta\, \mathbb{E}[E_{i}(t,\sigma,\sigma v)
    E_{j}(t,\theta,\theta v)]\\ &=& \lim_{R\rightarrow
    +\infty}\frac{1}{4R}\int_{-R}^{R} d\sigma\int_{-R}^{R} d\theta\,
  \mathbb{E}[X\cdot E(t,\sigma,\sigma v) \,X\cdot E(t,\theta,\theta
    v)]\\ &=& \lim_{R\rightarrow +\infty}\frac{1}{4R}\mathbb{E}\left[
    \left(\int_{-R}^{R}d\sigma\, X\cdot E(t,\sigma,\sigma
    v)\right)^2\right]  \geq 0,
\end{eqnarray*}  
which ends the Proof of Proposition~\ref{prop:Dprop}.
\end{proof}

To end this section, it is worthwhile to compare
the diffusion matrix obtained here for the stochastic case and
the one obtained for the deterministic case in Section~\ref{ss:IDF}.
If $\varepsilon$ is small enough such that $t/\varepsilon^2 > \uptau$, then
using $({\rm H}3)$, \eqref{def:Eepsilon}, and the compact support
of $\mathcal{R}_\uptau(\cdot,\cdot,\sigma,\cdot)$ in the variable $\sigma$
(included in $[-\uptau,\uptau]$; see Proposition~\ref{prop:Dprop}), 
the stochastic diffusion matrix $\mathscr{D}_\uptau$, defined by \eqref{def:CoefDiff},
rewrites as
\begin{eqnarray}
  \mathscr{D}_\uptau(t,v)&=&\lim_{\varepsilon \rightarrow 0}\int_0^\uptau
  d\sigma\, \mathbb{E}[ E^\varepsilon(t,x) \otimes E^\varepsilon(t-\varepsilon^2\sigma,x-\sigma
    v)]\nonumber\\
  &=&\lim_{\varepsilon \rightarrow 0}\int_{\R^+}
  d\sigma\, \chi_{[0,t/\varepsilon^2]}(\sigma)\mathbb{E}[ E^\varepsilon(t,x) \otimes E^\varepsilon(t-\varepsilon^2\sigma,x-\sigma
    v)]
  \label{Dsto}. 
\end{eqnarray}
Comparing the deterministic diffusion matrix \eqref{def:CDif}
(obtained for the global-in-time approach in Section~\ref{ss:GIA}) and
stochastic diffusion matrix \eqref{Dsto}, we observe that they are the
same except that the space average is
replaced by the statistical average. If we
suppose that in definition \eqref{def:CDif} of the deterministic
diffusion matrix, the electric field $E^\varepsilon$ is a random
vector field, and if we take the expectation value of
\eqref{def:CDif}, then the homogeneity property  $({\rm H}3)$ implies that
the space average is trivially the identity. Therefore
we recover  the stochastic diffusion matrix \eqref{Dsto} or
\eqref{def:CoefDiff} from the ``deterministic'' one \eqref{def:CDif} by a
statistical average and the homogeneity property.
The link between the deterministic and stochastic diffusion matrices
is also reinforced by

\begin{corollary}
  \label{cor:dloveiwtr}
  Let $(t,k)\mapsto \Omega(t,k):= \omega(k)t $ be a real-valued
  function, where the given real-valued function $k\mapsto \omega(k)$
  is odd in the variable $k$. Let  $(t,k) \mapsto \widehat{E}_0(t,k)$
  be a given real vector-valued function, which is even in the variable
  $k$ and such that $|k||\widehat{E}_0|\in L^\infty(\R^+;
  \ell^2(\Z^d))$. Then there exists a
  matrix-valued function $\mathcal{R}_\uptau^0:\R^+\times\R^+\times\R\times
  \mathbb{T}^d\rightarrow \R^{2d}$ such that the associated diffusion
  matrix, defined by formula \eqref{def:CoefDiff} of
  Theorem~\ref{thm:dloveiwtr}, is
  \begin{equation}
    \label{DifMGen_2}
  \mathscr{D}_\uptau(t,v) = \frac{\uptau}{2}\sum_{k\in \Z^d}
  \left(\frac{\sin\big ( \frac{\uptau}{2}
    \big(\partial_t\Omega(t,k)- k\cdot v\big) \big)}
       {\frac{\uptau}{2} \big(\partial_t\Omega(t,k)-
         k\cdot v\big)}\right)^2 \widehat{E}_0(t,k) \otimes
       \widehat{E}_0(t,k).
  \end{equation}
  Moreover the autocorrelation matrix $\mathcal{R}_\uptau^0$ and its
  associated diffusion matrix \eqref{DifMGen_2} satisfy properties
  $i)$--$iv)$ of Proposition~\ref{prop:Dprop}.
\end{corollary}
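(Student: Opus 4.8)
The plan is to read Corollary~\ref{cor:dloveiwtr} as a realizability statement: it suffices to exhibit one admissible matrix-valued kernel $\mathcal{R}_\uptau^0$ whose diffusion matrix, computed through formula \eqref{def:CoefDiff}, equals \eqref{DifMGen_2}, and then to check the four properties of Proposition~\ref{prop:Dprop}. Since here $\Omega(t,k)=\omega(k)t$, one has $\partial_t\Omega(t,k)=\omega(k)$, and the explicit shape of \eqref{DifMGen} in the local-in-time analysis of Section~\ref{ss:LITA} dictates the right guess: the fast phase of the WKB symbol should be modulated by a triangular (tent) profile in the correlation-time variable. I would therefore set
\[
\mathcal{R}_\uptau^0(t,s,\sigma,y):=\Big(1-\frac{|\sigma|}{\uptau}\Big)_+\sum_{k\in\Z^d}\cos\big(\omega(k)\sigma-k\cdot y\big)\,\widehat{E}_0(t,k)\otimes\widehat{E}_0(s,k),
\]
so that at $s=t$, $y=\sigma v$ the cosine collapses to $\cos\big((\omega(k)-k\cdot v)\sigma\big)$.

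The computational core is then a single scalar integral. Writing $a_k:=\omega(k)-k\cdot v=\partial_t\Omega(t,k)-k\cdot v$ and inserting the ansatz into \eqref{def:CoefDiff}, everything reduces to
\[
\int_0^\uptau\Big(1-\frac{\sigma}{\uptau}\Big)\cos(a_k\sigma)\,d\sigma=\frac{1-\cos(\uptau a_k)}{\uptau\,a_k^2}=\frac{\uptau}{2}\left(\frac{\sin(\uptau a_k/2)}{\uptau a_k/2}\right)^2,
\]
the last step using $1-\cos(\uptau a_k)=2\sin^2(\uptau a_k/2)$. Summing against $\widehat{E}_0(t,k)\otimes\widehat{E}_0(t,k)$ returns exactly \eqref{DifMGen_2}. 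This is simply the statement that the one-sided cosine transform of the width-$\uptau$ tent function is the $\mathrm{sinc}^2$ symbol, which is precisely the mechanism that produced the finite-$\uptau$ matrix \eqref{DifMGen} in the deterministic WKB computation.

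For the structural properties I would argue directly. Property $i)$ follows from the evenness of the cosine and the symmetry of the rank-one matrices $\widehat{E}_0\otimes\widehat{E}_0$, the spatial $2\pi$-periodicity being built into the Fourier series. For property $ii)$, the support in $\sigma\subset[-\uptau,\uptau]$ is forced by the tent factor, and the $W^{2,\infty}(\mathbb{T}^d)$ bound, uniform in $(t,s,\sigma)$, holds because two $y$-derivatives cost a factor $|k|^2$ and the hypothesis $|k|\,|\widehat{E}_0|\in L^\infty(\R^+;\ell^2(\Z^d))$ is exactly $\sup_t\sum_k|k|^2|\widehat{E}_0(t,k)|^2<\infty$; this is where that hypothesis is used. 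Property $iii)$ then follows from \eqref{def:CoefDiff} together with $ii)$, as in Proposition~\ref{prop:Dprop}, or by differentiating \eqref{DifMGen_2} directly. Property $iv)$ is immediate here, since $\mathrm{sinc}^2\ge0$ gives $X^T\mathscr{D}_\uptau X=\frac{\uptau}{2}\sum_k\mathrm{sinc}^2(\tfrac{\uptau}{2}a_k)\,(X\cdot\widehat{E}_0(t,k))^2\ge0$ for every $X\in\R^d$.

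The only genuinely non-mechanical points, which I would treat as the crux, are the guess of the tent profile and the justification of the name ``autocorrelation matrix''. The latter amounts to checking that $\mathcal{R}_\uptau^0$ is a legitimate (positive-definite) correlation kernel: the tent function is positive-definite because its Fourier transform is the non-negative $\mathrm{sinc}^2$ (Bochner), and this is what makes property $iv)$ structural rather than accidental. To make the connection with Theorem~\ref{thm:dloveiwtr} fully explicit one may, as in Appendix~\ref{ACRE} and in the spirit of Example~2 of \cite{PV03}, realize $\mathcal{R}_\uptau^0$ as the two-point function of a centered field obtained by modulating each WKB mode $\widehat{E}_0(t,k)\,e^{\,{\rm i}(k\cdot x-\omega(k)\tau)}$ by an independent width-$\uptau$ moving average of white noise (with the Hermitian constraint ensuring a real field); the disjointness of the averaging windows for $|\tau-\sigma|\ge\uptau$ is what yields the independence required by $({\rm H}2)$, centering gives $({\rm H}1)$, and spatial stationarity gives $({\rm H}3)$. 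I expect the main obstacle in that optional step to be the almost-sure $W^{2,\infty}$ regularity $({\rm H}4)$ of such a Gaussian series, which needs slightly more decay than the bare $\ell^2$ hypothesis; but for the Corollary as stated it is enough to exhibit $\mathcal{R}_\uptau^0$ and verify $i)$--$iv)$ directly, as above.
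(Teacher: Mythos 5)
Your proposal is correct and follows essentially the same route as the paper: you take the WKB Fourier modes modulated by the triangular (tent) autocorrelation profile of width $\uptau$, recognize the resonance function $R_\uptau$ as its (co)sine transform, and verify properties $i)$--$iv)$ exactly as in Proposition~\ref{prop:Dprop}, your hypothesis $|k||\widehat{E}_0|\in L^\infty(\R^+;\ell^2(\Z^d))$ being used in the same place (the $W^{2,\infty}$ bound via Cauchy--Schwarz). The only differences are cosmetic: your kernel is the real cosine form of the paper's $\mathcal{R}_\uptau^0$ (obtained there by pairing $k$ and $-k$ using the parities of $\omega$ and $\widehat{E}_0$), and you evaluate the one-sided integral $\int_0^\uptau(1-\sigma/\uptau)\cos(a_k\sigma)\,d\sigma$ elementarily where the paper symmetrizes to $\frac12\int_\R$ and invokes the Fourier transform of the tent function $\Lambda(\sigma/\uptau)$.
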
  

Before giving the proof of Corollary~\ref{cor:dloveiwtr}, we observe
that the diffusion matrix \eqref{DifMGen_2} is the same as the diffusion matrix
\eqref{DifMGen}, obtained for the local-in-time approach of the
non-self-consistent deterministic case in Section~\ref{ss:LITA}.

\begin{namedproof}{{\it of Corollary \ref{cor:dloveiwtr}.}}
  We consider the Fourier series decomposition of a given smooth electric field $E$,
  \[
  E(t,\tau,x) =\sum_{k\in \Z^d} e^{\,{\rm i} k\cdot
    x}\widehat{E}(t,\tau,k),
  \]
  where, without loss of generality, we take
  \[
  \widehat{E}(t,\tau,k)=\widehat{E}_0(t,\tau,k)e^{-{\rm i}
    \Omega(\tau,k)}=\widehat{E}_0(t,\tau,k)e^{-{\rm i} \omega(k)\tau},
  \]
  with the function $\Omega(t,k)$ (respectively, $\omega(k)$) chosen like in
  the statement of Corollary~\ref{cor:dloveiwtr}.  Since
  $\widehat{E}(t,\tau,k)$ is Hermitian
  (i.e. $\widehat{E}^\ast(t,\tau,k)=\widehat{E}(t,\tau,-k)$), and because
  the function $k\mapsto\omega (k)$ is odd, we could choose either
  $\widehat{E}_0(t,\tau,k)$ Hermitian, or real and even in $k$.  We
  restrict ourselves to the case where $\widehat{E}_0(t,\tau,k)$ is real
  and even in $k$. In order to be consistent with hypothesis $({\rm H}3)$, 
  using the real vector-valued function $\widehat{E}_0(t,k)$ like in the
  statement of Corollary~\ref{cor:dloveiwtr}, we can choose  $\widehat{E}_0(t,\tau,k)$
  such that
  \begin{equation}
    \label{DEFC}
    \mathbb{E}\big[\widehat{E}_0(t,\tau,k)\otimes\widehat{E}_0(t,\sigma,k')\big]
    = \widehat{E}_0(t,k)\otimes \widehat{E}_0(t,k)
    A_\uptau(\tau-\sigma) \delta(k+k'),
  \end{equation}
  where  $s \mapsto A_\uptau(s):\R \rightarrow \R^+$ is  a real non-negative even bounded function.  
  From \eqref{DEFC} we obtain
  \begin{equation*}
    \mathbb{E}[E(t,\tau,x) \otimes E(t,\sigma,y)]=\sum_{k\in\Z^d}
    \widehat{E}_0(t,k)\otimes \widehat{E}_0(t,k)\,A_\uptau(\tau-\sigma)
    e^{\,{\rm i} k\cdot(x-y)}e^{-{\rm
        i}\omega(k)(\tau-\sigma)},
  \end{equation*}
  and then from  \eqref{H3} we obtain
  \begin{equation}
    \label{RRBT}
    \mathcal{R}_\uptau^0(t,t,\tau,x)= \sum_{k\in\Z^d}
    \,\widehat{E}_0(t,k)\otimes \widehat{E}_0(t,k)\, A_\uptau(\tau)
    e^{\,-{\rm i}(\omega(k)\tau - k\cdot x)}.
  \end{equation}
  The autocorrelation matrix $\mathcal{R}_\uptau^0$ given by  \eqref{RRBT}
  satisfies the property $i)$ of Proposition~\ref{prop:Dprop}.  Setting
  the resonance function
  \begin{equation*}
    \label{DefResF}
    R_\uptau(\xi) := \frac{\uptau}{2}
    \left(\frac{\sin(\uptau\xi/2)}{\uptau \xi/2}\right)^2,
  \end{equation*}
  we define the time autocorrelation function $A_\uptau$ as the
  inverse Fourier transform of the function $2R_\uptau$, i.e.
  \begin{equation}
    \label{AIFTR}
    A_\uptau(\sigma):=\frac{1}{\pi} \int_{-\infty}^{+\infty}e^{{\rm i}
      \xi \sigma}R_\uptau(\xi)d\xi=\Lambda(\sigma/\uptau),
  \end{equation}  
  where the function $s\mapsto \Lambda(s)$ is the triangular function
  of support $]-1,1[$.  As a consequence $\| A_\uptau\|_{L^\infty(\R)}
      \leq 1$, and using  $|k||\widehat{E}_0|\in L^\infty(\R^+;
      \ell^2(\Z^d))$, we obtain that  $\mathcal{R}_\uptau^0\in
      L^\infty(\R^+\times\R^+\times\R;
      W^{2,\infty}(\mathbb{T}^d))$. In addition from \eqref{AIFTR}, we
      deduce that  $\ {\rm supp}(\mathcal{R}_\uptau^0) \subset \R^+\times\R^+
      \times [-\uptau, \uptau]\times\mathbb{T}^d$.  Therefore the
      autocorrelation matrix $\mathcal{R}_\uptau^0$ given by \eqref{RRBT}-\eqref{AIFTR}
      satisfies the property $ii)$ of
      Proposition~\ref{prop:Dprop}. Finally, let us compute the
      diffusion matrix $\mathscr{D}_\uptau$ from \eqref{def:CoefDiff} and
      \eqref{RRBT}-\eqref{AIFTR}. Using parity properties of the
      functions $\omega$, $A_\uptau$ and $\widehat{E}_0$,  we obtain
  \begin{eqnarray}
    \mathscr{D}_\uptau(t,v)&=& \int_0^\uptau d\sigma\,
    \mathcal{R}_\uptau^0(t,t,\sigma,\sigma v) \nonumber\\ &=&
    \sum_{k\in\Z^d}\widehat{E}_0(t,k)\otimes \widehat{E}_0(t,k)\,
    \int_0^\uptau  d\sigma\, e^{\,{\rm i}(\omega(k)-
      k\cdot v)\sigma} A_\uptau(\sigma) \nonumber\\ &=&
    \sum_{k\in\Z^d} \widehat{E}_0(t,k)\otimes \widehat{E}_0(t,k)\,
    \int_{\R^+}  d\sigma\, e^{\,{\rm i}(\omega(k)-
      k\cdot v)\sigma}\Lambda(\sigma/\uptau)
    \nonumber\\ &=&\frac{1}{2}\sum_{k\in\Z^d}
    \widehat{E}_0(t,k)\otimes
    \widehat{E}_0(t,k)\,\int_{\R}  d\sigma\, e^{\,{\rm
        i}(\omega(k)-k\cdot
      v)\sigma}\Lambda(\sigma/\uptau)
    \nonumber\\ &=&\frac{\uptau}{2}\sum_{k\in \Z^d}
    \widehat{E}_0(t,k)\otimes \widehat{E}_0(t,k) \left(\frac{\sin\big
      ( \frac{\uptau}{2} \big(\omega(k)- k\cdot v\big)
      \big)} {\frac{\uptau}{2} \big(\omega(k)- k\cdot
      v\big)}\right)^2,
     \label{diffMforprof}
  \end{eqnarray}
  which is \eqref{DifMGen_2}. By a direct differentiation of
  \eqref{diffMforprof} with respect to $v$, we verify easily
  the property $iii)$ of Proposition~\ref{prop:Dprop}, while the
  property $iv)$ of Proposition~\ref{prop:Dprop} is obvious from the
  structure of \eqref{diffMforprof}.
\end{namedproof}

\subsection{Proof of Theorem~\ref{thm:dloveiwtr}}
\label{s:proof}
Let us rewrite the Vlasov equation \eqref{eqn:V2} in the following form,
\begin{eqnarray}
  && \partial_t f^\varepsilon + \frac{1}{\varepsilon^2}
  \mathcal{L}f^\varepsilon = \mathcal{N}_t^\varepsilon f^\varepsilon,
  \label{eqn:V3}\\
 && {f^\varepsilon }_{|_{t=0}}=f_0^\varepsilon, \label{eqn:V3CI}
\end{eqnarray}  
where the linear operators $\mathcal{L}$ and
$\mathcal{N}_t^\varepsilon$ are defined by
\begin{equation}
  \label{defLN}
  \mathcal{L} = v\cdot\nabla_x, \quad \quad
  \mathcal{N}_t^\varepsilon
  =-\frac{1}{\varepsilon}E^\varepsilon(t,x)\cdot \nabla_v
  =-\frac{1}{\varepsilon}E(t,t/\varepsilon^2,x)\cdot \nabla_v.
\end{equation}
Obviously the operators $\mathcal{L}$ and $\mathcal{N}_t^\varepsilon$
are skew-adjoint for the scalar product of $L^2(Q)$, while
the operator $\mathcal{L}$ and the deterministic group $S_t^\varepsilon$,
generated by $\varepsilon^{-2}\mathcal{L}$ (see
Section~\ref{ss:Not}), commute with $\mathbb{E}$.
Of course space and statistical averages commute.
From hypothesis $({\rm H}2)$, the random operators $\mathcal{N}_t^\varepsilon$ and
$\mathcal{N}_s^\varepsilon$ are independent as soon as $|t-s|>\varepsilon^2\uptau$.

The next useful proposition states that time decorrelation of the stochastic
electric field also entails time decorrelation between the
distribution function and the electric field.
\begin{proposition}{(time decorrelation property between $f^\varepsilon$ and $E^\varepsilon$)}
  \label{decoref}
  Assume $({\rm H}2)$. Suppose that the random initial data
  $f_0^\varepsilon$ and the electric field $E^\varepsilon$ are
  independent. Then   $\mathcal{N}_{s}^\varepsilon$ is independent of
  $f^\varepsilon(t)$ as soon as $s\geq t+\varepsilon^2 \uptau$.   
\end{proposition}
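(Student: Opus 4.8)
The plan is to exploit the causal structure of the Vlasov flow together with the finite-range time independence built into $({\rm H}2)$. The key observation is that the value of the solution at time $t$ only ``sees'' the electric field up to time $t$, whereas $\mathcal{N}_s^\varepsilon$ depends solely on the single time slice $E^\varepsilon(s)=E(s,s/\varepsilon^2,\cdot)$, and for $s\geq t+\varepsilon^2\uptau$ this slice is decorrelated from everything that determines $f^\varepsilon(t)$.

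First I would establish a measurability (causality) statement: $f^\varepsilon(t)$ is measurable with respect to the $\sigma$-algebra $\mathcal{G}_t^\varepsilon:=\sigma\big(f_0^\varepsilon,\{E^\varepsilon(\sigma)\}_{0\leq\sigma\leq t}\big)$ generated by the initial datum and the field restricted to $[0,t]$. Thanks to the regularity $({\rm H}4)$ (namely $E\in L^\infty(\R^+\times\R^+;W^{2,\infty}(\mathbb{T}^d))$, hence Lipschitz in $x$ uniformly in time), the characteristic system $\dot X=V/\varepsilon^2$, $\dot V=\varepsilon^{-1}E^\varepsilon(\tau,X)$ is globally well-posed by Cauchy--Lipschitz for a.e. realisation, and the unique weak solution is transported along characteristics, $f^\varepsilon(t,x,v)=f_0^\varepsilon\big(\Phi^\varepsilon_{t\to 0}(x,v)\big)$, where the backward flow $\Phi^\varepsilon_{t\to 0}$ is built only from $\{E^\varepsilon(\sigma)\}_{0\leq\sigma\leq t}$. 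Alternatively, iterating the Duhamel formula as in \eqref{lita:eqn:2} yields a convergent Picard series in which every factor $\mathcal{N}_{s_j}^\varepsilon$ carries a time $s_j\in[0,t]$; either route gives the claimed $\mathcal{G}_t^\varepsilon$-measurability.

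Second I would show, using $({\rm H}2)$, that the slice $E^\varepsilon(s)$ is independent of $\mathcal{G}_t^\varepsilon$ whenever $s\geq t+\varepsilon^2\uptau$. For every $\sigma\in[0,t]$ one has $(s-\sigma)/\varepsilon^2\geq(s-t)/\varepsilon^2\geq\uptau$, so the fast times $s/\varepsilon^2$ and $\sigma/\varepsilon^2$ are separated by at least $\uptau$; by $({\rm H}2)$, read at the level of the $\sigma$-algebras generated by the corresponding field slices, $E^\varepsilon(s)$ is then independent of the whole family $\{E^\varepsilon(\sigma)\}_{0\leq\sigma\leq t}$. Combining this with the standing hypothesis that $f_0^\varepsilon$ and $E^\varepsilon$ are independent, $E^\varepsilon(s)$ is independent of the generators of $\mathcal{G}_t^\varepsilon$, hence of $\mathcal{G}_t^\varepsilon$ itself. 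Since $\mathcal{N}_s^\varepsilon=-\varepsilon^{-1}E^\varepsilon(s,\cdot)\cdot\nabla_v$ is a deterministic measurable functional of the single slice $E^\varepsilon(s)$, while $f^\varepsilon(t)$ is $\mathcal{G}_t^\varepsilon$-measurable by the first step, the two are independent, which is exactly the assertion.

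The main obstacle is the first (causality/measurability) step, together with the precise reading of $({\rm H}2)$ in the second. Upgrading the literal pairwise statement of $({\rm H}2)$ to independence of $E^\varepsilon(s)$ from the \emph{joint} law of the continuum family $\{E^\varepsilon(\sigma)\}_{0\leq\sigma\leq t}$ requires interpreting the hypothesis as finite-range independence of the $\sigma$-algebras generated by disjoint fast-time windows (separated by $\uptau$), which is the natural reading consistent with \cite{PV03}. On the solution side, one must ensure that the characteristic (equivalently, Duhamel) representation is legitimate for the weak solution at hand; here the $W^{2,\infty}$ spatial regularity of the field is precisely what promotes the flow to a classical one and removes any ambiguity, so that no delicate DiPerna--Lions renormalisation argument is needed.
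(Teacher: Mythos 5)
Your proposal is correct and follows essentially the same route as the paper: the paper's proof also uses the Duhamel representation to observe that $f^\varepsilon(t)$ depends only on $f_0^\varepsilon$ and on $E^\varepsilon(\sigma)$ for $\sigma\leq t$, then invokes the independence of $f_0^\varepsilon$ from the field together with $({\rm H}2)$ (the fast times being separated by at least $\uptau$ when $s\geq t+\varepsilon^2\uptau$). Your additional care --- the explicit $\sigma$-algebra measurability via the Picard series or the characteristic flow, and the reading of $({\rm H}2)$ as independence of the $\sigma$-algebras generated by $\uptau$-separated fast-time windows --- simply makes rigorous what the paper's short proof leaves implicit.
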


\begin{proof} 
  From the Duhamel formula
  \[
  f^\varepsilon(t)=  S_t^\varepsilon f_0^\varepsilon + \int_0^t d\sigma\,
  S_{t-\sigma}^\varepsilon \mathcal{N}_{\sigma}^\varepsilon
  f^\varepsilon(\sigma),
      \]
  where $t\mapsto S_t^\varepsilon$ is the deterministic group
  generated by $\varepsilon^{-2}\mathcal{L}$ (see
  Section~\ref{ss:Not}),      we observe that $f^\varepsilon(t)$
  depends only on $f_0^\varepsilon$ and
  $\mathcal{N}_{\sigma}^\varepsilon$ (or
  $E^\varepsilon(\sigma,\cdot)$)   for $\sigma \leq t$.       Since
  $f_0^\varepsilon$ is independent of $E^\varepsilon(t,\cdot)$,
  $\forall t\in\R$, and since the electric fields
  $E^\varepsilon(s,\cdot)$ and $E^\varepsilon(t,\cdot)$ are
  independent as soon as $s > t + \varepsilon^2 \uptau$ (assumption
  $({\rm H}2)$), we obtain from the Duhamel formula the desired
  result.
\end{proof}

We start our analysis by recalling basic statements that we collect in

\begin{proposition}
\label{prop:WPV}
Assume  $({\rm H}4)$ and consider a sequence
$\{f_0^\varepsilon\}_{\varepsilon>0}$ of initial data such that
\[
 f_0^\varepsilon \geq 0, \ \  \mbox{ and for a.e. }\upomega \in \Upomega,
 \   \|f_0^\varepsilon \|_{L^1(Q)} +\|f_0^\varepsilon \|_{L^\infty(Q)} \leq C_0<\infty.
\]
Then, for any $\varepsilon>0$, the Cauchy's problem
\eqref{eqn:V3}-\eqref{eqn:V3CI} has a unique non-negative solution
$f^\varepsilon \in \mathscr{C}(\R^+,L^1\cap L^\infty(Q))$, which is
given by
\begin{equation}
  \label{LSVE}
f^\varepsilon(t,x,v) =
f_0^\varepsilon(X^\varepsilon(0;t,x,v),V^\varepsilon(0;t,x,v)),  
\end{equation}
where the characteristic curves $(X^\varepsilon,V^\varepsilon)$ are
solutions to the ODEs,
\begin{equation}
  \frac{dX^\varepsilon}{dt}(t) =
  \frac{1}{\varepsilon^2}V^\varepsilon(t), \quad
  \frac{dV^\varepsilon}{dt}(t) =
  \frac{1}{\varepsilon}E^\varepsilon(t,X(t)), \quad
  X^\varepsilon(0;0,x,v)=x, \ \ V^\varepsilon(0;0,x,v) =
  v. \label{eqcc}
\end{equation}
Moreover we have the a priori estimates,
\[
\| f^\varepsilon(t) \|_{L^p(Q)} =\| f_0^\varepsilon \|_{L^p(Q)}, \quad
1\leq p \leq \infty.  
\]
In addition, there exist a function $f_0\in L^1\cap L^\infty(\R^d)$, and
a function $f\in L^\infty(\R^+; L^1\cap L^\infty(\R^d))$, such as, up to
subsequences,
\begin{equation*}
\mathbb{E}[f_0^\varepsilon] \rightharpoonup f_0 \ \mbox{ in }
\ L^\infty(Q) \ \ \mbox{weak}\!-\!\ast, \quad \mbox{ and } \quad
\mathbb{E}[f^\varepsilon ] \rightharpoonup f \ \mbox{ in }
\ L^\infty(\R^+;L^\infty(Q)) \ \ \mbox{weak}\!-\!\ast.
\end{equation*}  
The limit point $f$ is such that $\fint dx\,f =f \in  L^\infty(\R^+;
L^1\cap L^\infty(\R^d))$.  The function
$\mathbb{E}[\fint dx\, f^\varepsilon]$ is the solution of 
\begin{eqnarray}
&&\partial_t \mathbb{E}\left[\fint dx\,f^\varepsilon\right]  +
  \nabla_v \cdot \mathbb{E}\left[\fint dx\, \frac{E^\varepsilon f^\varepsilon}{\varepsilon}\right]=0,
  \quad \mbox{ in }\ \mathcal{D}'(\R^+\times \R^d), \label{eqn:EAV_2}\\ &&
        {\mathbb{E}\left[\fint dx\,f^\varepsilon\right]}_{|_{t=0}}=
        \mathbb{E}\left[\fint dx\,f_0^\varepsilon\right]. \label{eqn:EAV_2_0}
\end{eqnarray}  
\end{proposition}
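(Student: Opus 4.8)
The plan is to solve \eqref{eqn:V3}-\eqref{eqn:V3CI} for a.e.\ fixed $\upomega$ as a \emph{linear} transport equation with a given smooth field, by the method of characteristics, and only afterwards take the statistical average. First I would note that assumption $({\rm H}4)$ makes $E^\varepsilon(t,\cdot)=E(t,t/\varepsilon^2,\cdot)$ Lipschitz in $x$, uniformly in $t$, for a.e.\ $\upomega$; the phase-space field $(v/\varepsilon^2,E^\varepsilon(t,x)/\varepsilon)$ driving \eqref{eqcc} is therefore Lipschitz in $(x,v)$ and of at most linear growth, so Cauchy--Lipschitz (with a Gr\"onwall bound precluding finite-time blow-up) yields a unique global flow $(x,v)\mapsto(X^\varepsilon(s;t,x,v),V^\varepsilon(s;t,x,v))$. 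Because that field is divergence-free ($\nabla_x\cdot(v/\varepsilon^2)=0$ and $\nabla_v\cdot(E^\varepsilon/\varepsilon)=0$), the flow is volume-preserving; hence \eqref{LSVE} is the unique non-negative weak solution, it lies in $\mathscr{C}(\R^+;L^1\cap L^\infty(Q))$, and the change of variables with unit Jacobian gives at once $\|f^\varepsilon(t)\|_{L^p(Q)}=\|f_0^\varepsilon\|_{L^p(Q)}$ for every $1\leq p\leq\infty$.

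Next I would pass to the statistical average. The pointwise-in-$\upomega$ bounds $\|f^\varepsilon(t)\|_{L^1\cap L^\infty(Q)}\leq C_0$ integrate to uniform bounds on $\mathbb{E}[f_0^\varepsilon]$ and $\mathbb{E}[f^\varepsilon]$ in $L^\infty(Q)$ and $L^\infty(\R^+;L^\infty(Q))$ respectively, whence Banach--Alaoglu furnishes, along a subsequence, weak-$\ast$ limits $f_0$ and $f$. That $f_0,f\in L^1$ follows from non-negativity together with the uniform $L^1$ bound: testing the weak-$\ast$ convergence against the indicator of any bounded $B\subset Q$ gives $\int_B f_0\leq C_0$, and monotone convergence as $B\uparrow Q$ closes the argument. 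This is the only place where the absence of a moment bound (unlike Theorem~\ref{th:SCDC}) must be circumvented, and positivity suffices.

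I would then identify the limit as $x$-independent exactly as in the proof of Theorem~\ref{th:SCDC}. Writing the weak formulation \eqref{wfV2} of \eqref{eqn:V2} for each $\upomega$ and taking $\mathbb{E}$ (which commutes with $\mathcal{L}$ and with $\fint dx$), the $\varepsilon^2$ time term is $O(\varepsilon^2)$ and the interaction term is $O(\varepsilon)$ --- here $({\rm H}4)$ guarantees $\mathbb{E}[\|E\|_{L^\infty(\R^+\times\R^+;W^{2,\infty}(\mathbb{T}^d))}]<\infty$ --- so in the limit $v\cdot\nabla_x f=0$ in $\mathcal{D}'(\R^+\times Q)$, and Lemma~\ref{lem:EFF} forces $f=\fint dx\,f$, independent of $x$. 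The same estimate shows $\mathbb{E}[\fint dx\,f^\varepsilon]\rightharpoonup f$.

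Finally, averaging \eqref{eqn:V2} over $\mathbb{T}^d$ annihilates the transport term by periodicity, and $\nabla_v\cdot E^\varepsilon=0$ lets me write $E^\varepsilon\cdot\nabla_v f^\varepsilon=\nabla_v\cdot(E^\varepsilon f^\varepsilon)$; taking $\mathbb{E}$ then yields \eqref{eqn:EAV_2}, with \eqref{eqn:EAV_2_0} read off at $t=0$. The flux $\mathbb{E}[\fint dx\,E^\varepsilon f^\varepsilon/\varepsilon]$, despite the factor $\varepsilon^{-1}$, is a bona fide distribution because its velocity divergence is tested as $\langle\mathbb{E}[\fint dx\,f^\varepsilon],\partial_t\varphi\rangle$, uniformly bounded by $C_0\|\partial_t\varphi\|_{L^1}$, precisely as in point $iii)$ of Theorem~\ref{th:SCDC}. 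The only genuinely delicate point is bookkeeping rather than analysis: one must check the Bochner-measurability of $\upomega\mapsto f^\varepsilon$ and invoke Fubini to interchange $\mathbb{E}$ with the space integration and with the differential operators --- all guaranteed by the continuity of the flow in $\upomega$ and the integrability supplied by $({\rm H}4)$.
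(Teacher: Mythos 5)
Your proposal is correct and follows essentially the same route as the paper's proof: a.e.-$\upomega$ resolution of \eqref{eqn:V3}-\eqref{eqn:V3CI} by characteristics under $({\rm H}4)$, conservation of the $L^p(Q)$ norms, weak--$\ast$ compactness of the expectations, the $\varepsilon^2$-scaled weak formulation combined with Lemma~\ref{lem:EFF} to obtain the $x$-independence of the limit $f$, and finally the space-averaged weak form of the Vlasov equation to get \eqref{eqn:EAV_2}-\eqref{eqn:EAV_2_0}. Your only departures are cosmetic: you derive the $L^p$ conservation from volume preservation of the flow where the paper invokes skew-adjointness of $\mathcal{L}$ and $\mathcal{N}_t^\varepsilon$, and you make explicit (via positivity, testing against indicators and monotone convergence) why the weak--$\ast$ limits $f_0$ and $f$ remain in $L^1$, a point the paper subsumes under standard weak-compactness arguments.
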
  
\begin{proof}
  Since $\|\mathbb{E}[f_0^\varepsilon] \|_{L^1(Q)} +\|\mathbb{E}[f_0^\varepsilon] \|_{L^\infty(Q)} \leq
  \mathbb{E}[\|f_0^\varepsilon \|_{L^1(Q)}] +\mathbb{E}[\|f_0^\varepsilon \|_{L^\infty(Q)}] \leq C_0 <\infty $,
  by weak compactness arguments there exists a function $f_0\in L^1\cap
  L^\infty(Q)$ such that $\mathbb{E}[f_0^\varepsilon]$ (up to a
  subsequence) converges in $L^\infty(Q)$ weak--$\ast$ to $f_0$.
  Using the regularity hypothesis $({\rm H}4)$ for the electric field
  $E$, the Cauchy--Lipschitz--Picard theorem for ODEs gives existence
  and uniqueness of a regular Lagrangian flow
  $(X^\varepsilon,V^\varepsilon)$, which is a solution of
  \eqref{eqcc}. It follows from standard results on first-order
  transport equations (see, e.g., \cite{BGP00}) that the Lagrangian
  solution to  \eqref{eqn:V3}-\eqref{eqn:V3CI} is given by
  \eqref{LSVE}.  From \eqref{LSVE}, we obtain
  $\|\mathbb{E}[f^\varepsilon]
  \|_{L^\infty(Q)}=\|\mathbb{E}[f_0^\varepsilon] \|_{L^\infty(Q)} \leq
  C_0 <\infty$.  Moreover using skew-adjointness of $\mathcal{L}$ and
  $\mathcal{N}_t^\varepsilon$, we can prove, following standard lines,
  that $\partial_t \| f^\varepsilon (t)\|_{L^p(Q)}=0$, for $1\leq p
  \leq\infty$. This leads to $\mathbb{E}[\|f^\varepsilon
    \|_{L^\infty(\R^+;L^p(Q))}] = \mathbb{E}[\|f_0^\varepsilon
    \|_{L^p(Q)} ]\leq C_0 <\infty$ and $\|\mathbb{E}[f^\varepsilon
  ]\|_{L^\infty(\R^+;L^p(Q))} \leq \mathbb{E}[\|f^\varepsilon
    \|_{L^\infty(\R^+;L^p(Q))}]\leq C_0 <\infty$.  By weak compactness arguments
  there exists a function $f\in L^\infty(\R^+; L^1\cap L^\infty(Q))$,
  such that  $\mathbb{E} [f^\varepsilon]$   (up to a subsequence)
  converges in $L^\infty(\R^+;L^\infty(Q))$ weak--$\ast$ to $f$.  Now
  we claim that  $\varepsilon^2 \mathcal{N}_t^\varepsilon
  f^\varepsilon\rightarrow 0$, in $\mathcal{D}'(\R^+\times Q)$ as
  $\varepsilon$ tends to zero.  Indeed, we have for all $\varphi \in
  \mathcal{D}(\R^+\times Q)$,
  \[
  |\langle \varepsilon^2 \mathcal{N}_t^\varepsilon f^\varepsilon,
  \varphi\rangle| = \varepsilon^2 |\langle f^\varepsilon ,
  \mathcal{N}_t^{\varepsilon \,\ast} \varphi\rangle|\leq \varepsilon
  \| f_0^\varepsilon \|_{L^\infty(Q)} \|
  \varphi\|_{L^\infty(\R^+;W^{1,1}(Q))} \|
  E\|_{L^\infty(\R^+\times\R^+\times \mathbb{T}^d)}  \longrightarrow 0,
  \]
  as $\varepsilon \rightarrow 0$. Then multiplying the Vlasov equation \eqref{eqn:V3} by
  $\varepsilon^2$, taking its expectation value, and letting
  $\varepsilon$  go to zero, we find
  \begin{equation}
    \mathcal{L} f =0, \quad \mbox{ in } \ \mathcal{D}'(\R^+\times Q),
    \label{Lfzero}
  \end{equation}
  where we have used the commutation property between $\mathbb{E}$ and
  $\mathcal{L}$.  From Lemma~\ref{lem:EFF} and \eqref{Lfzero}, we infer that $f$ is
  independent of $x$ and  $\fint dx\,f=f \in L^\infty(\R^+; L^1\cap L^\infty(\R^d)) $.
  Finally, the Vlasov equation \eqref{eqn:V3} is averaged in space and
  then rewritten in a weak form.   Taking the expectation value  of
  the result, we obtain \eqref{eqn:EAV_2}-\eqref{eqn:EAV_2_0}.
\end{proof}

The rest of the proof is devoted to pass to the limit in
equation \eqref{eqn:EAV_2}. For this, we can first
start with a simple iteration of the Duhamel formula as it was done in
Section~\ref{ss:LITA} for the deterministic case.
As explained in Section~\ref{ss:SIDF} this method fails.
To solve this problem we use the method of \cite{PV03},
which consists to apply a double iteration of the Duhamel formula for $f$.
This is described in Section~\ref{ss:DIDF}.

\subsubsection{Simple iteration of the Duhamel formula}
\label{ss:SIDF}
Following what we have done in Section~\ref{ss:LITA}, where
a simple iteration of the Duhamel formula is used, we obtain,
for all $\varphi \in \mathcal{D}(\R^+\times \R^d)$,
\begin{equation}
\label{lita:eqn:3:sto}
\int_{\R^+}dt \int_{\R^d}dv \, \varphi(t,v) \frac{\mathbb{E}\big[\fint dx\, f^\varepsilon(t+\theta)\big] -
  \mathbb{E}\big[\fint dx\, f^\varepsilon(t)\big]}{\theta}
=  \mathcal{T}_{1}^\varepsilon(\varphi)  +
\mathcal{T}_{2}^\varepsilon(\varphi), 
\end{equation}
where
\begin{equation}
\label{lita:eqn:T1:sto}
\mathcal{T}_{1}^\varepsilon(\varphi) :=  \int_{\R^+}
dt \int_{\R^d}dv \int_{t}^{t+\theta}ds \fint dx\, \frac{1}{\varepsilon \theta}
 \mathbb{E}\big[{E^\varepsilon(s)}\cdot \nabla_v
  \varphi(t,v) S_{s-t+\hat{\theta}}^\varepsilon
  f^\varepsilon(t-\hat{\theta})\big],
\end{equation}
and 
\begin{equation*}
\label{lita:eqn:T2:sto}
\mathcal{T}_{2}^\varepsilon(\varphi) := 
\mathcal{J}^\varepsilon(\varphi) + 
\mathcal{M}^\varepsilon(\varphi),
\end{equation*}
with
\begin{multline}
\label{lita:eqn:T21:sto}
\mathcal{J}^\varepsilon(\varphi):= \int_{\R^+} dt
\int_{\R^d}dv \\ \int_{t}^{t+\theta}ds\int_{t-\hat{\theta}}^{s}d\sigma
\frac{1}{\varepsilon^2 \theta} f(t,v) \nabla_v \cdot \Big(\fint dx\,
\mathbb{E}\big[ S_{s-\sigma}^\varepsilon
  E^\varepsilon(\sigma,x)\otimes E^\varepsilon(s,x) \nabla_v
  \varphi(t,v)\big]\Big), 
\end{multline}
and
\begin{multline*}
\label{lita:eqn:T22:sto}
\mathcal{M}^\varepsilon(\varphi):=  \int_{\R^+} dt
\int_{\R^d}dv  \int_{t}^{t+\theta}ds\int_{t-\hat{\theta}}^{s}d\sigma
\\ \frac{1}{\varepsilon^2 \theta} \fint dx\,
\mathbb{E}\Big[\big(S_{s-\sigma}^\varepsilon f^\varepsilon(\sigma)-
  f(t,v)\big)\nabla_v \cdot \Big( S_{s-\sigma}^\varepsilon
  E^\varepsilon(\sigma,x)\otimes E^\varepsilon(s,x) \nabla_v
  \varphi(t,v)\Big)\Big]. 
\end{multline*}
By choosing $\hat{\theta}\geq \varepsilon^2 \uptau$, we can use the time
decorrelation hypothesis $({\rm H}2)$, Proposition~\ref{decoref} and
assumption  $({\rm H}1)$ to show that for equation
\eqref{lita:eqn:T1:sto} we have $ \mathcal{T}_{1}^\varepsilon(
\varphi)=0$.  The term $\mathcal{J}^\varepsilon(
\varphi)$, defined by equation \eqref{lita:eqn:T21:sto}, can be
treated as it was done in Section~\ref{ss:IDF} and  it gives the
diffusion term. It remains to deal with the error term $
\mathcal{M}^\varepsilon(\varphi)$, which is of order zero with
respect to $\varepsilon$, i.e.  $ \mathcal{M}^\varepsilon(
\varphi) =\mathcal{O}(\varepsilon^0)$. For this, we observe
that we have to evaluate the expectation of a cubic product between
$f^\varepsilon(\sigma)-f(t)$, $E^\varepsilon(\sigma)$ and
$E^\varepsilon(s)$, i.e. schematically
$\mathbb{E}[(f^\varepsilon(\sigma)-f(t)) E^\varepsilon(\sigma)
  E^\varepsilon(s)]$.  Using hypotheses $({\rm H}2)$-$({\rm H}3)$ and
Proposition~\ref{decoref}, we would like to replace an expression of
the form $\mathbb{E}[(f^\varepsilon(\sigma)-f(t))
  E^\varepsilon(\sigma) E^\varepsilon(s)]$ by an expression of the
form $\mathbb{E}[f^\varepsilon-f]\mathbb{E}[E^\varepsilon
  E^\varepsilon]$, because from Proposition~\ref{prop:WPV} we know
that $\mathbb{E}[f^\varepsilon - f]\rightharpoonup 0\,$ in $\,
L^\infty(\R^+;L^\infty(Q))$  weak--$\ast$.  Unfortunately it is not
possible, since $f^\varepsilon(\sigma)$ and $E^\varepsilon(\sigma)$
are evaluated at the same time $\sigma$.  To remedy this problem we
follow the procedure of \cite{PV03}, which consists of using a
double iteration of the Duhamel formula for $f$. A double instead of a
simple iteration of the Duhamel formula is used  to go back in time
far enough in order to use the time decorrelation property $({\rm
  H}2)$.  The price of this procedure is the introduction of a second
error term, namely $\mu_t^\varepsilon$ (defined by \eqref{def:eta}).
For the error term  $\mu_t^\varepsilon$, we face the same problem as
for the term $\mathcal{M}^\varepsilon$, i.e.  we cannot use the time
decorrelation hypotheses $({\rm H}2)$-$({\rm H}3)$ and
Proposition~\ref{decoref}.  Nevertheless, the error term
$\mu_t^\varepsilon$ is of the order $\mathcal{O}(\varepsilon)$. Therefore, to
show that $\lim_{\varepsilon\rightarrow 0}{\mu_t^\varepsilon}=0$ in the distributional
sense, we do not use time decorrelation hypotheses (which are useless),
but we appeal to the regularity hypotheses on the electric field
$E^\varepsilon$, namely  $({\rm H}4)$.

\subsubsection{Double iteration of the Duhamel formula}
\label{ss:DIDF}

First we recall that $t\mapsto S_t^\varepsilon$ is the (deterministic) group on
$L^p(Q)$,  $1\leq p\leq \infty$, generated by
$\varepsilon^{-2}\mathcal{L}$ (see equation \eqref{def:Set}).
Using the group $S_t^\varepsilon$ and
the Duhamel formula, the formal solution to \eqref{eqn:V3} is given by
\begin{equation}
\label{duhamel1}
f^\varepsilon(t)=  S_{t-s}^\varepsilon f^\varepsilon(s) + \int_s^td\tau\,
S_{t-\tau}^\varepsilon\mathcal{N}_{\tau}^\varepsilon
f^\varepsilon(\tau).
\end{equation}
Taking $s=t-\varepsilon^2\uptau$ in \eqref{duhamel1}, and making the
change of variable $\tau=t-\sigma$, we obtain from \eqref{duhamel1},
\begin{equation}
\label{duhamel2}
f^\varepsilon(t)=  S_{\varepsilon^2\uptau}^\varepsilon
f^\varepsilon(t-\varepsilon^2\uptau) + \int_0^{\varepsilon^2\uptau}d\sigma\,
S_{\sigma}^\varepsilon\mathcal{N}_{t-\sigma}^\varepsilon
f^\varepsilon(t-\sigma).
\end{equation}
In the integral term of \eqref{duhamel2}, we observe that the electric
field and the distribution function are evaluated at the same time
$t-\sigma$. As a consequence,
if we substitute \eqref{duhamel2} to
$f^\varepsilon$ in the right hand side of \eqref{eqn:EAV_2}
(like it was done in Section~\ref{ss:SIDF}), we obtain
a quadratic term with respect to the electric field that we cannot
decorrelate in time from the distribution function.  For this reason
and following \cite{PV03},
we iterate a second time the Duhamel formula. In the same way that we
obtained \eqref{duhamel1}, we obtain
\begin{equation}
\label{duhamel3}
f^\varepsilon(t-\sigma)=  S_{2\varepsilon^2\uptau-\sigma}^\varepsilon
f^\varepsilon(t-2\varepsilon^2\uptau) +
\int_0^{2\varepsilon^2\uptau-\sigma}ds\,
S_{s}^\varepsilon\mathcal{N}_{t-\sigma-s}^\varepsilon
f^\varepsilon(t-\sigma-s).
\end{equation}
Substituting the right-hand side of \eqref{duhamel3} to
$f^\varepsilon(t-\sigma)$ in the right-hand side of \eqref{duhamel2},
and using the properties of the group $S_t^\varepsilon$, we obtain
\begin{multline}
  \label{dduhamel1}
f^\varepsilon(t)=  S_{\varepsilon^2\uptau}^\varepsilon
f^\varepsilon(t-\varepsilon^2\uptau) + \int_0^{\varepsilon^2\uptau} d\sigma\,
S_{\sigma}^\varepsilon\mathcal{N}_{t-\sigma}^\varepsilon
S_{-\sigma}^\varepsilon S_{2\varepsilon^2\uptau}^\varepsilon
f^\varepsilon(t-2\varepsilon^2\uptau) \\ +
\int_0^{\varepsilon^2\uptau} d\sigma
\int_0^{2\varepsilon^2\uptau-\sigma}  ds\,
S_{\sigma}^\varepsilon\mathcal{N}_{t-\sigma}^\varepsilon
S_{s}^\varepsilon \mathcal{N}_{t-\sigma-s}^\varepsilon
f^\varepsilon(t-\sigma-s). 
\end{multline}
Applying the operator $\mathcal{N}_t^\varepsilon$ to \eqref{dduhamel1},
and then applying successively the  average in space and the expectation
value, we obtain
\begin{multline}
  \label{dduhamel2}
  -\nabla_v\cdot \mathbb{E}\left[\fint dx\,\frac{E^\varepsilon(t) f^\varepsilon(t)}{\varepsilon} \right]=
  \fint dx\,\mathbb{E}\big[\mathcal{N}_t^\varepsilon
    S_{\varepsilon^2\uptau}^\varepsilon
    f^\varepsilon(t-\varepsilon^2\uptau)\big]\\ +
  \int_0^{\varepsilon^2\uptau} d\sigma \fint dx\, \mathbb{E}\left[
    \mathcal{N}_t^\varepsilon
    S_{\sigma}^\varepsilon\mathcal{N}_{t-\sigma}^\varepsilon
    S_{-\sigma}^\varepsilon S_{2\varepsilon^2\uptau}^\varepsilon
    f^\varepsilon(t-2\varepsilon^2\uptau) \right] +
  \mu_t^\varepsilon,
\end{multline}
with
\begin{equation}
  \label{def:eta}
\mu_t^\varepsilon= \int_0^{\varepsilon^2\uptau} d\sigma
\int_0^{2\varepsilon^2\uptau-\sigma}  ds
\fint dx\,\mathbb{E}\left[\mathcal{N}_t^\varepsilon
  S_{\sigma}^\varepsilon\mathcal{N}_{t-\sigma}^\varepsilon
  S_{s}^\varepsilon \mathcal{N}_{t-\sigma-s}^\varepsilon
  f^\varepsilon(t-\sigma-s)\right]. 
\end{equation}
Using Proposition~\ref{decoref}, we obtain that $f^\varepsilon(t)$ is
independent of  $\mathcal{N}_{s}^\varepsilon$ as soon as $s\geq t+
\varepsilon^2 \uptau$. Then, using  hypothesis $({\rm H}1)$,
we obtain 
\begin{equation}
  \label{dduhamel3}
  \mathbb{E}\big[\mathcal{N}_t^\varepsilon
    S_{\varepsilon^2\uptau}^\varepsilon
    f^\varepsilon(t-\varepsilon^2\uptau)\big] =
  \mathbb{E}\big[\mathcal{N}_t^\varepsilon \big]
  S_{\varepsilon^2\uptau}^\varepsilon
  \mathbb{E}\big[f^\varepsilon(t-\varepsilon^2\uptau)\big]=0.
\end{equation}
We note that the analysis of the term \eqref{dduhamel3} is the same as
the one done for the term \eqref{lita:eqn:T1:sto}. Therefore the first
term of the right-hand side of \eqref{dduhamel2} vanishes. Since
Proposition~\ref{decoref} implies that
$\mathcal{N}_{t}^\varepsilon$ and $\mathcal{N}_{t-\sigma}^\varepsilon$
are independent  of $f^\varepsilon(t-2\varepsilon^2\uptau)$, for
$0\leq \sigma \leq\varepsilon^2\uptau$,  we obtain from \eqref{dduhamel2},
\begin{equation}
  \label{dduhamel4}
\begin{aligned}
  -\nabla_v\cdot \mathbb{E}\left[\fint dx\,
    \frac{E^\varepsilon(t) f^\varepsilon(t)}{\varepsilon} \right]
  &= \
  \int_0^{\varepsilon^2\uptau}d\sigma \fint dx\,\mathbb{E}\left[\mathcal{N}_t^\varepsilon
    S_{\sigma}^\varepsilon\mathcal{N}_{t-\sigma}^\varepsilon
    S_{-\sigma}^\varepsilon\right]
  \mathbb{E}\left[S_{2\varepsilon^2\uptau}^\varepsilon
    f^\varepsilon(t-2\varepsilon^2\uptau) \right]  +
  \mu_t^\varepsilon \\
  &=\
  \int_0^{\varepsilon^2\uptau}d\sigma \fint dx\, \mathbb{E}\left[
    \mathcal{N}_t^\varepsilon
    S_{\sigma}^\varepsilon\mathcal{N}_{t-\sigma}^\varepsilon
    S_{-\sigma}^\varepsilon\right] \mathbb{E}\left[ f^\varepsilon(t)
    \right] 
  \\
  &+ \ \int_0^{\varepsilon^2\uptau}d\sigma \fint dx\,
  \mathbb{E}\left[ \mathcal{N}_t^\varepsilon
    S_{\sigma}^\varepsilon\mathcal{N}_{t-\sigma}^\varepsilon
    S_{-\sigma}^\varepsilon\right]
  \mathbb{E}\left[S_{2\varepsilon^2\uptau}^\varepsilon
    f^\varepsilon(t-2\varepsilon^2\uptau) -f^\varepsilon(t)\right] \\
  &+ \ \mu_t^\varepsilon.
\end{aligned}  
\end{equation}

In fact, we have to consider a weak form of \eqref{dduhamel4}, which is given by

\begin{proposition}
\label{prop:wfrhs}  
We define  the differential operator $\Theta_t^\varepsilon$ as
\begin{equation}
  \label{def:theta}
  \Theta_t^\varepsilon \varphi = \int_0^{\uptau} d\sigma\,
  (\sigma \nabla_x \cdot + \nabla_v \cdot\,)\, 
  \mathbb{E}\left[ E^\varepsilon(t-\varepsilon^2\sigma,x-\sigma v) \otimes
     E^\varepsilon(t,x) \right]\nabla_v \varphi, \quad \varphi \in \mathcal{D}(\R^+\times \R^d),
\end{equation}
and the  bilinear form $\nu_t^\varepsilon$ as
\begin{equation}
  \label{nuform}
  \nu_t^\varepsilon(\psi, \varphi)=  \int_{\R^+}dt \int_{\R^d}dv \fint dx\,\psi  \Theta_t^\varepsilon \varphi,
  \quad \forall \psi\in L^\infty(\R^+\times Q), \ \ \forall \varphi \in \mathcal{D}(\R^+\times \R^d).
\end{equation}  
Then, the weak formulation of \eqref{dduhamel4} reads: $\forall \varphi \in \mathcal{D}(\R^+\times \R^d)$,
\begin{multline}
  \int_{\R^+}dt \int_{\R^d}dv\, \nabla_v\varphi \cdot
  \mathbb{E}\left[\fint dx\, \frac{f^\varepsilon(t) E^\varepsilon(t) }{\varepsilon}\right]\\
  =\nu_t^\varepsilon\left(\mathbb{E}[ f^\varepsilon(t)], \varphi\right)+
  \nu_t^\varepsilon\left(\mathbb{E}\left[S_{2\varepsilon^2\uptau}^\varepsilon
  f^\varepsilon(t-2\varepsilon^2\uptau) -f^\varepsilon(t)\right], \varphi\right)+
  \mu_t^\varepsilon(\varphi),
\label{rhsve}
\end{multline}
where the remainder term $\mu_t^\varepsilon(\varphi)$ is given by
\begin{multline}
\label{wfmu}
\mu_t^\varepsilon(\varphi)
= -\varepsilon^4 \int_{\R^+}dt \int_{\R^d} dv \int_{0}^{\uptau} d\sigma \int_{0}^{2 \uptau-\sigma} ds \fint dx\,\\
\mathbb{E}\left[
f^\varepsilon(t-\varepsilon^2(\sigma+s))\,
\mathcal{N}_{t-\varepsilon^2(\sigma+s)}^\varepsilon
S_{-\varepsilon^2s}^\varepsilon\mathcal{N}_{t-\varepsilon^2\sigma}^\varepsilon
S_{-\varepsilon^2\sigma}^\varepsilon \mathcal{N}_t^\varepsilon
\varphi\right].
\end{multline}
\end{proposition}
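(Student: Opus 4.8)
The statement is purely a reformulation of the already-derived identity \eqref{dduhamel4} in weak (tested) form, so the plan is to verify it by direct algebraic bookkeeping rather than by any new estimate. First I would pair \eqref{dduhamel4} against a test function $\varphi\in\mathcal{D}(\R^+\times\R^d)$, integrate in $(t,v)$ and average in $x$, and then transpose every transport/perturbation operator off the distribution function and onto $\varphi$. The only tools needed are the algebraic properties recorded just before the statement: the skew-adjointness of $\mathcal{L}$ and of $\mathcal{N}_t^\varepsilon$ for the $L^2(Q)$ inner product, the unitarity of the free-flow group with $(S_t^\varepsilon)^\ast=S_{-t}^\varepsilon$, and the commutation of $\mathbb{E}$ with $\fint dx$ and with $S_t^\varepsilon$. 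Writing $\mathcal{N}_t^\varepsilon h=-\varepsilon^{-1}\nabla_v\cdot(E^\varepsilon(t,x)h)$ in conservative form in $v$ keeps the integrations by parts transparent.

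The crux is the conjugation identity $S_\sigma^\varepsilon\,\mathcal{N}_{t-\sigma}^\varepsilon\,S_{-\sigma}^\varepsilon=-\varepsilon^{-1}E^\varepsilon(t-\sigma,x-\sigma v/\varepsilon^2)\cdot\big[(\sigma/\varepsilon^2)\nabla_x+\nabla_v\big]$, obtained from $S_\sigma^\varepsilon\nabla_vS_{-\sigma}^\varepsilon=(\sigma/\varepsilon^2)\nabla_x+\nabla_v$ and $S_\sigma^\varepsilon E^\varepsilon(t-\sigma,x)S_{-\sigma}^\varepsilon=E^\varepsilon(t-\sigma,x-\sigma v/\varepsilon^2)$. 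This is exactly where the term $\sigma\nabla_x$ in \eqref{def:theta} is born. Transposing the common operator $\int_0^{\varepsilon^2\uptau}d\sigma\,\fint dx\,\mathbb{E}[\mathcal{N}_t^\varepsilon S_\sigma^\varepsilon\mathcal{N}_{t-\sigma}^\varepsilon S_{-\sigma}^\varepsilon](\cdot)$ onto $\varphi$ turns it, for any fixed $\psi\in L^\infty(\R^+\times Q)$, into $\int dt\,dv\fint dx\,\psi\,\mathbb{E}[S_\sigma^\varepsilon\mathcal{N}_{t-\sigma}^\varepsilon S_{-\sigma}^\varepsilon\mathcal{N}_t^\varepsilon\varphi]$ integrated in $\sigma$, where $\psi$ now sits undifferentiated. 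Taking the expectation produces the correlation $\mathbb{E}[E^\varepsilon(t-\sigma,x-\sigma v/\varepsilon^2)\otimes E^\varepsilon(t,x)]$ through hypothesis $({\rm H}3)$, and the change of variable $\sigma\mapsto\varepsilon^2\sigma$ absorbs the factor $\varepsilon^{-2}$ (the Jacobian $\varepsilon^2$ against the two factors $\varepsilon^{-1}$), sends $\int_0^{\varepsilon^2\uptau}$ to $\int_0^\uptau$, and collapses the fast-time and spatial shifts to $E^\varepsilon(t-\varepsilon^2\sigma,x-\sigma v)$. A short computation — using that $M(t,\sigma,x,v):=\mathbb{E}[E^\varepsilon(t-\varepsilon^2\sigma,x-\sigma v)\otimes E^\varepsilon(t,x)]$ depends on $v$ only through the shift, so that $\nabla_v M$ contributes $-\sigma\nabla_x M$ on the first factor — shows that the combination $(\sigma\nabla_x\cdot+\nabla_v\cdot)(M\nabla_v\varphi)$ of \eqref{def:theta} reproduces exactly the derived integrand, hence this common operator equals $\nu_t^\varepsilon(\psi,\varphi)$. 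Applying this with $\psi=\mathbb{E}[f^\varepsilon(t)]$ and with $\psi=\mathbb{E}[S_{2\varepsilon^2\uptau}^\varepsilon f^\varepsilon(t-2\varepsilon^2\uptau)-f^\varepsilon(t)]$ yields the first two terms on the right-hand side of \eqref{rhsve}.

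For the remainder I would treat \eqref{def:eta} the same way. Pairing $\mu_t^\varepsilon$ with $\varphi$ and transposing the string $\mathcal{N}_t^\varepsilon S_\sigma^\varepsilon\mathcal{N}_{t-\sigma}^\varepsilon S_s^\varepsilon\mathcal{N}_{t-\sigma-s}^\varepsilon$ onto $\varphi$ reverses its order and produces the factor $(-1)^3$ from the three skew-adjoint $\mathcal{N}$'s together with $(S_\cdot^\varepsilon)^\ast=S_{-\cdot}^\varepsilon$, giving $-\mathcal{N}_{t-\sigma-s}^\varepsilon S_{-s}^\varepsilon\mathcal{N}_{t-\sigma}^\varepsilon S_{-\sigma}^\varepsilon\mathcal{N}_t^\varepsilon$ acting on $\varphi$. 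The double rescaling $\sigma\mapsto\varepsilon^2\sigma$, $s\mapsto\varepsilon^2 s$ contributes the Jacobian $\varepsilon^4$, turns the time domains into $\{0\le\sigma\le\uptau,\ 0\le s\le2\uptau-\sigma\}$ and shifts the arguments as in \eqref{wfmu}; the three surviving factors $\varepsilon^{-1}$ inside the operators leave the announced $\varepsilon^4$ in front, which is the expression \eqref{wfmu} (and confirms $\mu_t^\varepsilon=\mathcal{O}(\varepsilon)$).

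The computation involves no limiting argument and no decorrelation hypothesis: only the algebraic identities above, the homogeneity $({\rm H}3)$, and — to guarantee that all integrations by parts carry no boundary contribution — the compact support of $\varphi$ in $(t,v)$, the $x$-periodicity, and the regularity $({\rm H}4)$ of $E$, which makes every integrand absolutely integrable. The one genuinely delicate point, and the place where an error is easiest to make, is the derivative bookkeeping that produces the mixed operator $(\sigma\nabla_x\cdot+\nabla_v\cdot)$: one must keep track of the $v$-dependence hidden in the spatial shift $x-\sigma v$ of the first electric field and check that, after the expectation, it recombines precisely into the divergence form of \eqref{def:theta} rather than leaving a spurious $\nabla_x\cdot E^\varepsilon$ term.
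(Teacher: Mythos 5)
Your proposal is correct and follows essentially the same route as the paper's proof: both test \eqref{dduhamel4} against $\varphi$, transpose the operator strings using the skew-adjointness of $\mathcal{N}_t^\varepsilon$ and $(S_t^\varepsilon)^\ast=S_{-t}^\varepsilon$, expand $S_\sigma^\varepsilon\mathcal{N}_{t-\sigma}^\varepsilon S_{-\sigma}^\varepsilon$ into $-\varepsilon^{-1}E^\varepsilon(t-\sigma,x-\sigma v/\varepsilon^2)\cdot\big[(\sigma/\varepsilon^2)\nabla_x+\nabla_v\big]$, and rescale $\sigma\mapsto\varepsilon^2\sigma$ (and $s\mapsto\varepsilon^2 s$ for the remainder, producing the $-\varepsilon^4$ prefactor with the sign $(-1)^3$ from the three $\mathcal{N}$'s). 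Your reconciliation of the divergence form via $\nabla_v M=-\sigma\nabla_x M$ on the shifted factor is exactly the bookkeeping the paper performs through its integrations by parts in $x$ and $v$ leading to $\nu_t^\varepsilon(\psi,\varphi)$.
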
  

\begin{proof}
  We have to show that the weak formulation of \eqref{dduhamel4} is given
  by \eqref{rhsve}. The left hand side of \eqref{rhsve} is obtained straightforwardly
  from the left hand side of \eqref{dduhamel4}.
  The first two terms
  of the right-hand side of \eqref{rhsve} can be obtained in a similar way
  from  the first two terms of the right-hand side of  \eqref{dduhamel4} respectively.
  Indeed, we consider a non-random function $\psi=\psi(t,x,v)$, which can
  be either $\mathbb{E}[ f^\varepsilon(t)]$ or
  $\mathbb{E}\left[S_{2\varepsilon^2\uptau}^\varepsilon
    f^\varepsilon(t-2\varepsilon^2\uptau) -f^\varepsilon(t)\right]$.
  Then the quantity of interest is
  \begin{equation}
    \int_0^{\varepsilon^2\uptau}d\sigma \fint dx\, \mathbb{E}\left[
    \mathcal{N}_t^\varepsilon
    S_{\sigma}^\varepsilon\mathcal{N}_{t-\sigma}^\varepsilon
    S_{-\sigma}^\varepsilon\right] \psi.
    \label{QI1}
  \end{equation}
  Multiplying \eqref{QI1} by a test function $\varphi\in \mathcal{D}(\R^+\times \R^d)$,
  and integrating with respect to the time and velocity variables,
  we obtain, after expanding all operators,
  \begin{multline}
    \frac{1}{\varepsilon^2}  
    \int_{\R^+}dt \int_{\R^d} dv\,   \varphi(t,v)  \fint dx \int_0^{\varepsilon^2\uptau}d\sigma\, \\ 
    \nabla_v\cdot \left (\mathbb{E}\left[E^\varepsilon(t,x) \otimes 
    E^\varepsilon(t-\sigma,x-\sigma v/\varepsilon^2)\right]
    \left( \frac{\sigma}{\varepsilon^2} \nabla_x + \nabla_v 
    \right)\psi(t,x,v) \right).
    \label{QI2}
  \end{multline}
  Using integrations by parts with respect to the variables $x$ and $v$,
  and making the change of time variable $\sigma'=\sigma /\varepsilon^2$, we obtain
  from \eqref{QI2},
  \begin{multline}
      \int_{\R^+}dt \int_{\R^d} dv \fint dx\,  \psi(t,x,v)  
      \int_0^{\uptau}d\sigma
      \left(\sigma \nabla_x\cdot + \nabla_v\cdot \,
    \right)
      \left (\mathbb{E}\left[E^\varepsilon(t-\varepsilon^2\sigma,x-\sigma v) \otimes E^\varepsilon(t,x)\right]
    \nabla_v\varphi(t,v) \right)\\
    =  \int_{\R^+}dt \int_{\R^d} dv \fint dx\, 
     \psi \Theta_t^\varepsilon \varphi =\nu_t^\varepsilon(\psi,\varphi).
    \label{QI3}
  \end{multline}
  The first two terms of the right-hand side of \eqref{rhsve} follow by
  replacing respectively $\psi$ by  $\mathbb{E}[ f^\varepsilon(t)]$ and
  $\mathbb{E}\left[S_{2\varepsilon^2\uptau}^\varepsilon
  f^\varepsilon(t-2\varepsilon^2\uptau) -f^\varepsilon(t)\right]$ in \eqref{QI3}.  
  To obtain the weak formulation of the remainder term $\mu_t^\varepsilon$, we use
  the skew-adjointness of $\mathcal{N}_t^\varepsilon$ and 
  the dual of $S_t^{\varepsilon}$ given by $S_t^{\varepsilon\,\ast}=S_{-t}^\varepsilon$.
  Using  multiple velocity integrations by parts, several changes of variables in space,
  and the changes of time variables $\sigma'=\sigma/\varepsilon^2$
  and $s'=s/\varepsilon^2$, we obtain from the third term of the right-hand side of \eqref{rhsve},
  \begin{align*}
    \mu_t^\varepsilon(\varphi) &=\int_{\R^+}dt \int_{\R^d} dv\,  \varphi  \mu_t^\varepsilon\nonumber\\
    &= \int_{\R^+}dt \int_{\R^d} dv\,  \varphi
    \int_{0}^{\varepsilon^2 \uptau} d\sigma \int_{0}^{2\varepsilon^2     
      \uptau-\sigma} ds \fint dx\, \mathbb{E}\left[
      \mathcal{N}_t^\varepsilon
      S_{\sigma}^\varepsilon\mathcal{N}_{t-\sigma}^\varepsilon
      S_{s}^\varepsilon \mathcal{N}_{t-\sigma-s}^\varepsilon
      f^\varepsilon(t-\sigma-s)
      \right]\nonumber \\
    &=
    \int_{\R^+}dt 
    \int_{0}^{\varepsilon^2\uptau} d\sigma \int_{0}^{2\varepsilon^2 \uptau-\sigma} ds\,
    \mathbb{E}\left[
    \int_{\R^d} dv \fint dx\, \varphi 
      \mathcal{N}_t^\varepsilon
      S_{\sigma}^\varepsilon\mathcal{N}_{t-\sigma}^\varepsilon
      S_{s}^\varepsilon \mathcal{N}_{t-\sigma-s}^\varepsilon
      f^\varepsilon(t-\sigma-s)\right] \nonumber \\ 
    &= -\int_{0}^{\varepsilon^2 \uptau} d\sigma
    \int_{0}^{2\varepsilon^2 \uptau-\sigma} ds\,
    \mathbb{E}\left[
      \int_{\R^d} dv \fint dx\, f^\varepsilon(t-\sigma-s)
      \mathcal{N}_{t-\sigma-s}^\varepsilon
      S_{-s}^\varepsilon\mathcal{N}_{t-\sigma}^\varepsilon
      S_{-\sigma}^\varepsilon\mathcal{N}_t^\varepsilon
      \varphi\right] \nonumber \\
    &= -\varepsilon^4\int_{\R^+}dt \int_{\R^d} dv\int_{0}^{
      \uptau} d\sigma \int_{0}^{2 \uptau-\sigma} ds \fint dx\,\nonumber\\
    & \quad \quad \quad  
    \mathbb{E}\left[
      f^\varepsilon(t-\varepsilon^2(\sigma+s))
      \mathcal{N}_{t-\varepsilon^2(\sigma+s)}^\varepsilon
      S_{-\varepsilon^2s}^\varepsilon\mathcal{N}_{t-\varepsilon^2\sigma}^\varepsilon
      S_{-\varepsilon^2\sigma}^\varepsilon \mathcal{N}_t^\varepsilon
      \varphi\right],
    \label{eqn:remainder:0}
  \end{align*}
  which is \eqref{wfmu}. This ends the Proof of Proposition~\ref{prop:wfrhs}.
\end{proof}

The next lemma states the limit of the operator $\Theta_t^\varepsilon$
as $\varepsilon\rightarrow 0$.  

\begin{lemma}
  \label{lem:CVTheta}
  Under hypothesis $({\rm H}3)$,
  for all $\varphi\in \mathcal{D}(\R^+\times \R^d)$, the operator $ \Theta_t^{\varepsilon}$
 defined by \eqref{def:theta} becomes
\begin{equation}
    \label{def:Thetaeps}
    \Theta_t^{\varepsilon} \varphi =
    \nabla_v \cdot \left(
\left(\int_0^{\uptau}d\sigma\, \mathcal{R}_\uptau(t-\varepsilon^2\sigma,t,-\sigma, -\sigma v)
\right)\nabla_v \varphi\right),
\end{equation}
and  we obtain
  \begin{equation}
    \label{eqn:limitTheta}
    \Theta_t^{\varepsilon} \varphi \longrightarrow  \Theta_t^{0}
    \varphi \quad \mbox{ in } \  L^1(\R^+\times \R^d),
\end{equation}
where the operator $ \Theta_t^{0}$ is defined by
\begin{equation}
\label{def:Theta0}
\Theta_t^{0} \varphi = \nabla_v \cdot \left(
\left(\int_0^{\uptau}d\sigma\, \mathcal{R}_\uptau(t,t,-\sigma, -\sigma v)
\right)\nabla_v \varphi\right).
\end{equation}
\end{lemma}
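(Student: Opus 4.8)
The plan is to prove the two assertions separately: first the pointwise algebraic identity \eqref{def:Thetaeps}, then the $L^1$ convergence \eqref{eqn:limitTheta}. For the first part I would insert $E^\varepsilon(t,x)=E(t,t/\varepsilon^2,x)$ into the correlation in \eqref{def:theta} and apply the homogeneity hypothesis $({\rm H}3)$. The two fields carry slow times $t-\varepsilon^2\sigma$ and $t$, fast times $(t-\varepsilon^2\sigma)/\varepsilon^2=t/\varepsilon^2-\sigma$ and $t/\varepsilon^2$, and positions $x-\sigma v$ and $x$; hence the fast-time difference is $-\sigma$ and the space difference is $-\sigma v$, so that
\[
\mathbb{E}\big[E^\varepsilon(t-\varepsilon^2\sigma,x-\sigma v)\otimes E^\varepsilon(t,x)\big]=\mathcal{R}_\uptau(t-\varepsilon^2\sigma,t,-\sigma,-\sigma v).
\]
The crucial observation is that this matrix is independent of $x$ (the spatial slot of $\mathcal{R}_\uptau$ holds only the difference $-\sigma v$), and $\nabla_v\varphi(t,v)$ is likewise $x$-independent. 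Therefore the vector field on which the operator $(\sigma\nabla_x\cdot+\nabla_v\cdot\,)$ acts is constant in $x$, the $\sigma\nabla_x\cdot$ contribution vanishes identically, and only $\nabla_v\cdot$ survives. Pulling the $\sigma$-integration inside then gives exactly \eqref{def:Thetaeps}, with diffusion matrix $D^\varepsilon(t,v):=\int_0^\uptau d\sigma\,\mathcal{R}_\uptau(t-\varepsilon^2\sigma,t,-\sigma,-\sigma v)$.

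For the second part, writing $D^0(t,v):=\int_0^\uptau d\sigma\,\mathcal{R}_\uptau(t,t,-\sigma,-\sigma v)$ (so that $\Theta_t^0\varphi=\nabla_v\cdot(D^0\nabla_v\varphi)$ is \eqref{def:Theta0}), I would expand the difference by Leibniz:
\[
\Theta_t^\varepsilon\varphi-\Theta_t^0\varphi=\sum_{i,j}\Big[\partial_{v_i}\big(D^\varepsilon_{ij}-D^0_{ij}\big)\,\partial_{v_j}\varphi+\big(D^\varepsilon_{ij}-D^0_{ij}\big)\,\partial^2_{v_iv_j}\varphi\Big].
\]
By the chain rule, a $v$-derivative hitting the last slot $-\sigma v$ of $\mathcal{R}_\uptau$ produces $-\sigma$ times a first-order $x$-derivative of $\mathcal{R}_\uptau$, which is controlled by $\|\mathcal{R}_\uptau\|_{L^\infty(\R^+\times\R^+\times\R;\,W^{2,\infty}(\mathbb{T}^d))}$ thanks to property $ii)$ of Proposition~\ref{prop:Dprop}. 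Since $\varphi\in\mathcal{D}(\R^+\times\R^d)$, the factors $\partial_{v_j}\varphi$ and $\partial^2_{v_iv_j}\varphi$ are bounded and supported in a fixed compact set $[0,T]\times K$. Consequently it suffices to prove $D^\varepsilon\to D^0$ and $\nabla_v D^\varepsilon\to\nabla_v D^0$ in $L^1([0,T]\times K)$, which both reduce to showing, for $|\beta|\le1$,
\[
\int_0^T dt\int_K dv\int_0^\uptau d\sigma\,\big|\partial_x^\beta\mathcal{R}_\uptau(t-\varepsilon^2\sigma,t,-\sigma,-\sigma v)-\partial_x^\beta\mathcal{R}_\uptau(t,t,-\sigma,-\sigma v)\big|\longrightarrow0.
\]

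To close this I would invoke dominated convergence: property $ii)$ of Proposition~\ref{prop:Dprop} bounds the integrand uniformly in $\varepsilon$ by $2\|\mathcal{R}_\uptau\|_{W^{2,\infty}_x}$, which is integrable over the finite-measure region $[0,T]\times K\times[0,\uptau]$ and thus furnishes the dominating function; and as $\varepsilon\to0$ the shift $\varepsilon^2\sigma\le\varepsilon^2\uptau\to0$ drives the first slow-time argument back onto the second, so the integrand tends to $0$ provided $\mathcal{R}_\uptau$ is continuous in its first slow-time variable. The hard part is exactly this last step. Hypothesis $({\rm H}4)$ only gives $L^\infty$ (not continuous) regularity in the slow time, and since the two correlation slots are \emph{coupled} (both equal to $t$ in $D^0$), a naive pointwise limit is not available; the difference is an off-diagonal-to-diagonal evaluation $\mathcal{R}_\uptau(t-h,t,\cdot)\to\mathcal{R}_\uptau(t,t,\cdot)$. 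The rigorous route I would take is to fix $(\sigma,v)$, regard $a\mapsto\partial_x^\beta\mathcal{R}_\uptau(a,t,-\sigma,-\sigma v)$ as an $L^1([0,T])$ element, and pass the limit by continuity of translations in $L^1$ together with a density/approximation argument in the slow-time variable, the required slow-time continuity (equivalently, mean-square continuity of $E$) being guaranteed by the explicit construction of Appendix~\ref{ACRE}. This continuity-in-slow-time issue, rather than the spatial or fast-time structure, is the genuine technical obstacle.
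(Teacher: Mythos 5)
Your proposal is correct and follows essentially the same route as the paper: the identity \eqref{def:Thetaeps} obtained from $({\rm H}3)$ together with the observation that the correlation matrix depends on $x$ only through the difference $-\sigma v$ (so the $\sigma\nabla_x\cdot$ contribution vanishes), the chain-rule expansion producing $-\sigma(\nabla_x\cdot\mathcal{R}_\uptau)\cdot\nabla_v\varphi$ and $\mathcal{R}_\uptau:\nabla_v^2\varphi$ terms controlled by the $W^{2,\infty}$ bound of Proposition~\ref{prop:Dprop}~$ii)$, and dominated convergence with that bound as the dominating function. The only divergence is at the final limit, where the paper simply asserts the almost-everywhere pointwise convergence of $\mathcal{R}_\uptau(t-\varepsilon^2\sigma,t,\cdot,\cdot)$ to $\mathcal{R}_\uptau(t,t,\cdot,\cdot)$, implicitly using continuity of $\mathcal{R}_\uptau$ in its first slow-time slot, whereas you correctly flag that $({\rm H}4)$ only provides $L^\infty$ slow-time regularity and sketch a translation-in-$L^1$ remedy --- a legitimate scruple about a step the paper glosses over, not a defect of your argument.
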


\begin{proof}
  Using assumption $({\rm H}3)$, i.e.
  $
    \mathbb{E}[E(t,\tau,x)\otimes E(s,\sigma,y)]
    =\mathcal{R}_\uptau(t,s,\tau-\sigma,x-y),
  $
  for all $\varphi \in \mathcal{D}(\R^+\times \R^d)$,
  we obtain from
  \eqref{def:theta},
  \begin{equation}
    \label{def:ThetaEps:dual2}
    \begin{aligned}
      \Theta_t^{\varepsilon}  \varphi
      &= \int_0^{\uptau} d\sigma\,
      (\sigma \nabla_x \cdot + \nabla_v \cdot\,)\, 
      \mathbb{E}\left[ E^\varepsilon(t-\varepsilon^2\sigma,x-\sigma v) \otimes
        E^\varepsilon(t,x) \right]\nabla_v \varphi\nonumber \\
      &= \int_0^{\uptau}d\sigma\,
      \left(\sigma \nabla_x \cdot\,  + \, \nabla_v\cdot\,
      \right)\mathcal{R}_\uptau(t-\varepsilon^2\sigma,t,-\sigma,-\sigma v)
      \nabla_v \varphi \\
      &= \int_0^{\uptau}d\sigma\,
       \nabla_v\cdot\left(
       \mathcal{R}_\uptau(t-\varepsilon^2\sigma,t,-\sigma,-\sigma v)
       \nabla_v \varphi\right)\\
     &=  -\int_0^{\uptau}d\sigma\, \sigma
       (\nabla_x\cdot
       \mathcal{R}_\uptau)(t-\varepsilon^2\sigma,t,-\sigma,-\sigma v)
       \cdot \nabla_v \varphi
       +\int_0^{\uptau}d\sigma\,
       \mathcal{R}_\uptau(t-\varepsilon^2\sigma,t,-\sigma,-\sigma v)
       : \nabla_v^2 \varphi.
    \end{aligned}
  \end{equation}
  Since $\mathcal{R}_\uptau\in L^\infty(\R^+\times\R^+\times\R; W^{2,\infty}(\mathbb{T}^d))$,
  and because $\uptau$ is bounded,
  we obtain
  \begin{equation*}
    \label{def:ThetaEps:dual3}
    \int_0^{\uptau}d\sigma\,\sigma (\nabla_x\cdot\mathcal{R}_\uptau)(t-\varepsilon^2\sigma,t,-\sigma,-\sigma v)
    \cdot \nabla_v \varphi  \rightarrow  \int_0^{\uptau}d\sigma\,\sigma 
      (\nabla_x\cdot\mathcal{R}_\uptau)(t,t,-\sigma,-\sigma v) \cdot \nabla_v \varphi, 
    \end{equation*}
  for almost every $(t,v) \in \R^+\times \R^d$, as
  $\varepsilon \rightarrow 0$,
  and
  \begin{equation*}
    \label{def:ThetaEps:dual3bis}
     \int_0^{\uptau}d\sigma\,\mathcal{R}_\uptau(t-\varepsilon^2\sigma,t,-\sigma,-\sigma v) :
    \nabla_v^2 \varphi   \rightarrow  \int_0^{\uptau}d\sigma\,
    \mathcal{R}_\uptau(t,t,-\sigma,-\sigma v) :\nabla_v ^2 \varphi,
    \end{equation*}
  for almost every $(t,v) \in \R^+\times \R^d$, as
  $\varepsilon \rightarrow 0$.
  Moreover using the regularity of $\mathcal{R}_\uptau$ we obtain
  \begin{equation}
  \label{eqn:TETI}
  |  \Theta_t^{\varepsilon \, }  \varphi| \leq \| \sigma
    \nabla_x\mathcal{R}_\uptau\|_{L^\infty(\R_t^+\times\R_t^+;
      L^1([-\uptau,\uptau]_\sigma;L^\infty(\mathbb{T}^d)))} |\nabla_v
    \varphi| +
    \|\mathcal{R}_\uptau\|_{L^\infty(\R_t^+\times\R_t^+;
      L^1([-\uptau,\uptau]_\sigma;L^\infty(\mathbb{T}^d)))} 
    |\nabla_{v}^2 \varphi|,
  \end{equation}
  where the right-hand side of \eqref{eqn:TETI} defines a function in
  $L^1(\R^+\times \R^d)$ independent of $\varepsilon$.
  Therefore, using the Lebesgue dominated convergence theorem, we obtain the
  limit  \eqref{eqn:limitTheta}, where the limit point is given by
  \eqref{def:Theta0}.  This ends the proof.
\end{proof}

To deal with the second term of right-hand side of \eqref{rhsve},
we use
\begin{lemma}
  \label{lem:SecTerm}
  Under hypothesis $({\rm H}3)$,
  for all $\varphi \in \mathcal{D}(\R^+\times  \R^d)$, we obtain
  \begin{equation*}
    \lim_{\varepsilon \rightarrow 0} \,\nu_t^\varepsilon\left(
    \mathbb{E}[S_{2\varepsilon^2\uptau}^\varepsilon
      f^\varepsilon(t-2\varepsilon^2\uptau)-f^\varepsilon(t)],
    \varphi\right)= 0.
  \end{equation*}  
\end{lemma}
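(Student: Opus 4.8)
The plan is to exploit two structural simplifications and then reduce the statement to the continuity of translations in $L^1$. First I observe that the propagator appearing in the shift is $\varepsilon$-independent: by \eqref{def:Set} one has $S_{2\varepsilon^2\uptau}^\varepsilon g(x,v)=g(x-2\uptau v,v)$, i.e. a pure spatial translation $T_{2\uptau}$ that does not see $\varepsilon$ and whose space average is trivial, $\fint dx\,T_{2\uptau}g=\fint dx\,g$ by periodicity. Second, by \eqref{def:Thetaeps} and \eqref{def:Theta0} both $\Theta_t^\varepsilon\varphi$ and $\Theta_t^0\varphi$ are independent of the space variable $x$, since $\mathcal{R}_\uptau(t-\varepsilon^2\sigma,t,-\sigma,-\sigma v)$ carries no $x$-dependence. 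Writing $\psi^\varepsilon:=\mathbb{E}\big[S_{2\varepsilon^2\uptau}^\varepsilon f^\varepsilon(t-2\varepsilon^2\uptau)-f^\varepsilon(t)\big]$, the a priori estimates of Proposition~\ref{prop:WPV} (namely $\|f^\varepsilon(t)\|_{L^\infty(Q)}=\|f_0^\varepsilon\|_{L^\infty(Q)}\le C_0$, together with the fact that $S_t^\varepsilon$ is an $L^\infty$-isometry and that $\mathbb{E}$ does not increase the norm) give $\|\psi^\varepsilon\|_{L^\infty(\R^+\times Q)}\le 2C_0$ uniformly in $\varepsilon$.

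I would then split $\nu_t^\varepsilon(\psi^\varepsilon,\varphi)=\int_{\R^+}dt\int_{\R^d}dv\fint dx\,\psi^\varepsilon\,\Theta_t^0\varphi+\int_{\R^+}dt\int_{\R^d}dv\fint dx\,\psi^\varepsilon\,(\Theta_t^\varepsilon\varphi-\Theta_t^0\varphi)$. Because $\Theta_t^\varepsilon\varphi-\Theta_t^0\varphi$ is $x$-independent, the last integral is bounded by $\|\psi^\varepsilon\|_{L^\infty}\,\|\Theta_t^\varepsilon\varphi-\Theta_t^0\varphi\|_{L^1(\R^+\times\R^d)}$, which tends to zero by Lemma~\ref{lem:CVTheta}. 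For the leading term, the $x$-independence of $\Theta_t^0\varphi$ lets me factor out the space average and use $\fint dx\,T_{2\uptau}g=\fint dx\,g$. Setting $G^\varepsilon(t,v):=\mathbb{E}[\fint dx\,f^\varepsilon(t)](v)$, this reduces the leading term to $\int_{\R^+}dt\int_{\R^d}dv\,\big(G^\varepsilon(t-2\varepsilon^2\uptau,v)-G^\varepsilon(t,v)\big)\,h(t,v)$, where $h:=\Theta_t^0\varphi\in L^1\cap L^\infty(\R^+\times\R^d)$ has compact support (since $\mathcal{R}_\uptau\in L^\infty$, $\uptau$ is finite, and $\varphi$ is smooth and compactly supported).

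Finally I would show this last quantity vanishes. After the change of time variable $s=t-2\varepsilon^2\uptau$ in the first summand, it equals $\int_{\R^+}ds\int_{\R^d}dv\,G^\varepsilon(s,v)\big(h(s+2\varepsilon^2\uptau,v)-h(s,v)\big)$ up to a boundary contribution supported in $t\in[0,2\varepsilon^2\uptau]$ (arising from the lower endpoint of $\R^+$ and from the fact that the double Duhamel formula only propagates $f^\varepsilon$ backward for $t\ge 2\varepsilon^2\uptau$). This boundary contribution is $O(\varepsilon^2)$ because $G^\varepsilon$ and $h$ are bounded and $h$ has compact $v$-support; the main term is bounded by $\|G^\varepsilon\|_{L^\infty(\R^+\times\R^d)}\,\|h(\cdot+2\varepsilon^2\uptau,\cdot)-h\|_{L^1(\R^+\times\R^d)}$, which tends to zero by continuity of translations in $L^1$, since $2\varepsilon^2\uptau\to 0$ and $\|G^\varepsilon\|_{L^\infty}\le C_0$ by Proposition~\ref{prop:WPV}. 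The only genuinely delicate point is this last step: no decorrelation hypothesis is available here, so one cannot invoke independence, and the argument must rest entirely on the vanishing time shift together with the uniform $L^\infty$ bounds — which is precisely why it was important that $S_{2\varepsilon^2\uptau}^\varepsilon$ reduce to an $\varepsilon$-independent spatial shift whose space average is trivial.
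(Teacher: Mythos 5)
Your proof is correct, and its skeleton coincides with the paper's: both arguments rest on the two structural facts you isolate --- that $\Theta_t^\varepsilon\varphi$ given by \eqref{def:Thetaeps} is independent of $x$, so the space average factors and reduces everything to $G^\varepsilon(t,v)=\mathbb{E}\big[\fint dx\,f^\varepsilon(t)\big]$, and that $S^\varepsilon_{2\varepsilon^2\uptau}$ is the $\varepsilon$-independent spatial shift $g\mapsto g(x-2\uptau v,v)$, invisible to the space average --- together with Lemma~\ref{lem:CVTheta}. Where you diverge is the endgame. The paper keeps $\Theta_t^\varepsilon\varphi$ in the pairing, observes that $G^\varepsilon(t-2\varepsilon^2\uptau,\cdot)-G^\varepsilon(t,\cdot)\rightharpoonup 0$ in $L^\infty(\R^+\times\R^d)$ weak--$\ast$ (both terms having the same weak--$\ast$ limit $f$ by Proposition~\ref{prop:WPV}), and concludes by a weak--$\ast$/strong duality argument against $\Theta_t^\varepsilon\varphi\to\Theta_t^0\varphi$ in $L^1$. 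You instead split off $\Theta_t^0\varphi$, kill the remainder with Lemma~\ref{lem:CVTheta} and the uniform bound $\|\psi^\varepsilon\|_{L^\infty}\leq 2C_0$, and dispose of the leading term by transferring the time shift onto the test function $h=\Theta_t^0\varphi$ and invoking continuity of translations in $L^1$, i.e. $\|h(\cdot+2\varepsilon^2\uptau,\cdot)-h\|_{L^1(\R^+\times\R^d)}\to 0$. This variant buys three small things: the estimate is quantitative and holds along the full family, with no appeal to the extracted weak--$\ast$ limit $f$ at this stage; it makes explicit a point the paper leaves implicit, since the assertion that the time-shifted sequence inherits the same weak--$\ast$ limit is, underneath, exactly this translation-continuity fact; and you account for the boundary region $t\in[0,2\varepsilon^2\uptau]$ (where the double Duhamel iteration does not reach back), which the paper's proof glosses over and which your support-and-boundedness argument correctly dismisses as $\mathcal{O}(\varepsilon^2)$.
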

\begin{proof}
  Using definitions \eqref{nuform} and \eqref{def:Thetaeps} of respectively
  the bilinear form $\nu_t^\varepsilon$ and the operator $\Theta_t^\varepsilon$, 
  we obtain
  \begin{align}  
   \nu_t^\varepsilon\left( \mathbb{E}[S_{2\varepsilon^2\uptau}^\varepsilon
     f^\varepsilon(t-2\varepsilon^2\uptau)-f^\varepsilon(t)], \varphi\right) 
   &= \int_{\R^+}dt \int_{\R^d} dv  \fint dx\, 
   \mathbb{E}[S_{2\varepsilon^2\uptau}^\varepsilon
     f^\varepsilon(t-2\varepsilon^2\uptau)-f^\varepsilon(t)] \Theta_t^\varepsilon \varphi\nonumber\\
   &= \int_{\R^+}dt \int_{\R^d} dv\,\Theta_t^\varepsilon \varphi\ \mathbb{E}\left[\fint dx\,
     f^\varepsilon(t-2\varepsilon^2\uptau)- \fint dx\, f^\varepsilon(t)\right].
   \label{red2ndT}
\end{align}
On the one hand, from  Proposition~\ref{prop:WPV}, we obtain $\mathbb{E}[
  \fint dx\,f^\varepsilon]= \fint dx\,\mathbb{E}[ f^\varepsilon]
\rightharpoonup \fint dx\, f = f$ in $L^\infty(\R^+\times \R^d)$
weak--$\ast$, as $\varepsilon \rightarrow 0$. Then,
$ \mathbb{E}[\fint dx\,
 f^\varepsilon(t-2\varepsilon^2\uptau)-\fint dx\, f^\varepsilon(t)]
\rightharpoonup 0$ in $L^\infty(\R^+\times \R^d)$ weak--$\ast$ as
$\varepsilon \rightarrow 0$. On the other hand,  from
Lemma~\ref{lem:CVTheta}, we obtain $ \Theta_t^{\varepsilon}
\varphi \rightarrow \Theta_t^{0} \varphi$ in
$L^1(\R^+\times \R^d)$ strong as $\varepsilon \rightarrow 0$. Therefore,
we can pass to the limit $\varepsilon \rightarrow 0$ in
\eqref{red2ndT}, and we obtain the desired result.
\end{proof}

The asymptotic behavior of the term $\mu_t^\varepsilon$ is given by
\begin{lemma}
  \label{lem:remainder}
  Under hypothesis $({\rm H}4)$,
  for all $\varphi\in \mathcal{D}(\R^+\times \R^d)$, we obtain
  \begin{equation}
    \label{eqn:remainder}
    |\mu_t^\varepsilon(\varphi) | \leq  \varepsilon
    \uptau^4 C_0C_E\|\varphi\|_{L^\infty(\R^+;W^{3,1}(\R^d))}.
   \end{equation} 
\end{lemma}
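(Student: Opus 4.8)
The plan is to estimate $\mu_t^\varepsilon(\varphi)$ directly from its closed form \eqref{wfmu}, using only the $L^\infty$ bound on $f^\varepsilon$ and the spatial regularity $({\rm H}4)$ of the field; as the discussion preceding Section~\ref{ss:DIDF} stresses, no time-decorrelation is needed here, because the single surviving power of $\varepsilon$ is produced by pure power counting. First I would insert the definitions \eqref{defLN} of $\mathcal{N}_t^\varepsilon=-\varepsilon^{-1}E^\varepsilon(t,x)\cdot\nabla_v$ and \eqref{def:Set} of the shift group, $S_{-\varepsilon^2 s}^\varepsilon g(x,v)=g(x+sv,v)$, into the triple composition $\mathcal{N}_{t-\varepsilon^2(\sigma+s)}^\varepsilon S_{-\varepsilon^2 s}^\varepsilon \mathcal{N}_{t-\varepsilon^2\sigma}^\varepsilon S_{-\varepsilon^2\sigma}^\varepsilon \mathcal{N}_t^\varepsilon\varphi$, and expand it by applying the operators to $\varphi=\varphi(t,v)$ from right to left.

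The computational engine is the commutation rule for a velocity derivative through a shift, namely $\nabla_v\circ S_{-\varepsilon^2 s}^\varepsilon = S_{-\varepsilon^2 s}^\varepsilon\circ(s\nabla_x+\nabla_v)$, which is precisely the structure already visible in the operator $\Theta_t^\varepsilon$ of \eqref{def:theta}. Expanding with this rule, each of the three operators $\mathcal{N}^\varepsilon$ contributes one factor $\varepsilon^{-1}$ and one electric-field factor; whenever a $\nabla_v$ lands on a shifted field it yields a space derivative $\nabla_x E$ weighted by the (bounded) shift time, and otherwise it raises the order of the velocity derivative acting on $\varphi$. A careful bookkeeping of the two applications of $\mathcal{N}^\varepsilon$ that follow a shift shows that, in the worst case, three velocity derivatives fall on $\varphi$ (whence the $W^{3,1}$ norm in velocity) while at most two space derivatives are distributed among the three $E$-factors (whence the requirement $E\in W^{2,\infty}$ in $({\rm H}4)$).

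Once the expansion is organized the estimate is immediate. The three factors $\varepsilon^{-1}$ combine with the prefactor $\varepsilon^4$ to leave the net power $\varepsilon$. I would bound $f^\varepsilon(t-\varepsilon^2(\sigma+s))$ in $L^\infty(Q)$ by $\|f_0^\varepsilon\|_{L^\infty(Q)}\le C_0$ (Proposition~\ref{prop:WPV}), bound the product of the three field factors pointwise by $\|E\|_{L^\infty(\R^+\times\R^+;W^{2,\infty}(\mathbb{T}^d))}^3$, and take the expectation to generate $C_E=\mathbb{E}[\|E\|_{W^{2,\infty}}^3]$ from $({\rm H}4)$. The double integral $\int_0^\uptau d\sigma\int_0^{2\uptau-\sigma}ds$ has area bounded by a multiple of $\uptau^2$, and the polynomial shift weights ($\sigma\le\uptau$ and $s+\sigma\le 2\uptau$, appearing at most twice) contribute another factor $\uptau^2$, for a total $\uptau^4$; integrating the surviving $\nabla_v^{\le 3}\varphi$ against $dv$ over the compact time support of $\varphi$ produces $\|\varphi\|_{L^\infty(\R^+;W^{3,1}(\R^d))}$. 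Collecting these bounds yields exactly \eqref{eqn:remainder}.

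The main obstacle is purely combinatorial: one must carry out the three-fold composition carefully enough to certify that no velocity derivative of order higher than three reaches $\varphi$ and that the total number of space derivatives on the field factors never exceeds two, so that $({\rm H}4)$ and the norm $\|\varphi\|_{W^{3,1}}$ really suffice. The only subtle point is the repeated use of $\nabla_v\circ S_{-\varepsilon^2 s}^\varepsilon = S_{-\varepsilon^2 s}^\varepsilon\circ(s\nabla_x+\nabla_v)$, which couples the $x$- and $v$-derivatives and is responsible both for the shift-weighted $\nabla_x E$ terms and for the fact that the requisite spatial regularity of $E$ is exactly two derivatives; with that accounting done, the gain of one power of $\varepsilon$ and all the constants fall out at once.
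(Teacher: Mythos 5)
Your proposal is correct and takes essentially the same route as the paper's proof: the paper merely packages your right-to-left expansion in the operators $\Gamma_{t,\sigma}^\varepsilon=\mathcal{N}_{t-\varepsilon^2\sigma}^\varepsilon S_{-\varepsilon^2\sigma}^\varepsilon$ and the translation $T_{-\sigma v}$, and uses exactly your commutation rule $\nabla_v\circ S_{-\varepsilon^2 s}^\varepsilon=S_{-\varepsilon^2 s}^\varepsilon\circ(s\nabla_x+\nabla_v)$ to establish
$\|\Gamma_{t-\varepsilon^2\sigma,s}^\varepsilon\Gamma_{t,\sigma}^\varepsilon\mathcal{N}_t^\varepsilon\varphi\|\lesssim \uptau^2\varepsilon^{-3}\|E\|_{L^\infty(\R^+\times\R^+;W^{2,\infty}(\mathbb{T}^d))}^3\|\varphi\|_{L^\infty(\R^+;W^{3,1}(\R^d))}$. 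Your bookkeeping (three $\varepsilon^{-1}$ factors against the prefactor $\varepsilon^4$, at most three velocity derivatives on $\varphi$, at most two space derivatives on the field factors, $\uptau^2$ from the integration domain and $\uptau^2$ from the shift weights, with $\|f^\varepsilon\|_{L^\infty}\leq C_0$ and the expectation producing $C_E$) reproduces the paper's estimate \eqref{eqn:remainder} exactly.
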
 
\begin{proof}
  Introducing the operator
  $\Gamma_{t,\sigma}^\varepsilon$, defined by
  \begin{equation}
    \label{def:Gamma}
    \Gamma_{t,\sigma}^\varepsilon =
    \mathcal{N}_{t-\varepsilon^2\sigma}^\varepsilon
    S_{-\varepsilon^2\sigma}^\varepsilon,
  \end{equation}
  we obtain from definition \eqref{wfmu} of $\mu_t^\varepsilon(\varphi)$,
  \begin{eqnarray}
   |\mu_t^\varepsilon (\varphi)| &\lesssim&  \varepsilon^4
   \uptau^2 \|f^\varepsilon(t-\varepsilon^2(\sigma+s))
   \|_{L^\infty(\R_t^+\times [0,\uptau]_\sigma \times[0,2\uptau]_s
     \times Q)} \nonumber\\ && \mathbb{E}\left[\|
     \Gamma_{t-\varepsilon^2\sigma,s}^\varepsilon
     \Gamma_{t,\sigma}^\varepsilon \mathcal{N}_t^\varepsilon
     \varphi\|_{L^\infty(\R_t^+\times [0,\uptau]_\sigma
       \times[0,2\uptau]_s; L^1(Q))} \right] \nonumber \\ &\lesssim &
   C_0 \varepsilon^4 \uptau^2 \mathbb{E}\left[ \|
     \Gamma_{t-\varepsilon^2\sigma,s}^\varepsilon
     \Gamma_{t,\sigma}^\varepsilon \mathcal{N}_t^\varepsilon
     \varphi \|_{L^\infty(\R_t^+\times [0,\uptau]_\sigma
       \times[0,2\uptau]_s; L^1(Q))} \right].
     \label{eqn:remainder:3}
  \end{eqnarray}
  Introducing the translation operator $T_{-\sigma v}$ in the
  $x$-direction, defined by
  \[
  T_{-\sigma v}\psi(t,x,v)= \psi(t,x+\sigma v, v), 
  \]
  we obtain for any
  smooth function $\psi=\psi(t,x,v)$,
  \begin{eqnarray}
    \label{eqn:gamma:1}
    \Gamma_{t,\sigma}^\varepsilon \psi &=&
    -\varepsilon^{-1}E^\varepsilon(t-\varepsilon^2 \sigma,x)\cdot
    \nabla_v(T_{-\sigma v}\psi)
    \nonumber\\ &=&-\varepsilon^{-1}E(t-\varepsilon^2
    \sigma,t/\varepsilon^2 - \sigma,x)\cdot \left( \sigma T_{-\sigma
      v} \nabla_x\psi + T_{-\sigma v} \nabla_v\psi \right).
  \end{eqnarray}
  Using \eqref{def:Gamma} and \eqref{eqn:gamma:1}, we obtain for any
  smooth function $\psi=\psi(t,x,v)$,
  \begin{eqnarray}
    \Gamma_{t-\varepsilon^2\sigma,s}^\varepsilon
    \Gamma_{t,\sigma}^\varepsilon \psi &=&
    \mathcal{N}_{t-\varepsilon^2(\sigma+s)}^\varepsilon
    S_{-\varepsilon^2 s}^\varepsilon \Gamma_{t,\sigma}^\varepsilon
    \psi \nonumber\\ &=& -\varepsilon^{-1}
    \mathcal{N}_{t-\varepsilon^2(\sigma+s)}^\varepsilon
    \big( T_{-sv}E^\varepsilon(t-\varepsilon^2 \sigma,x)\big) \cdot
    T_{-(\sigma+s)v}(\sigma \nabla_x \psi + \nabla_v \psi)
    \nonumber\\ &=&\varepsilon^{-2}\sum_{i,j}E_i^\varepsilon(t-\varepsilon^2
    (\sigma+s),x) \Big\{ s
    T_{-sv}\partial_{x_i}E_j^\varepsilon(t-\varepsilon^2 \sigma,x)
    \Big [ \sigma  T_{-(\sigma+s)v}\partial_{x_j} \psi \nonumber \\ &&
      + \  T_{-(\sigma+s)v}\partial_{v_j} \psi \Big ] \ +
    \   T_{-sv}E_j^\varepsilon(t-\varepsilon^2 \sigma,x)  \Big [
      \sigma(\sigma+s)  T_{-(\sigma+s)v} \partial_{x_ix_j}^2 \psi
      \nonumber\\ &&  +\ \sigma  T_{-(\sigma+s)v} \partial_{v_ix_j}^2
      \psi \ +\  (\sigma+s)T_{-(\sigma+s)v} \partial_{x_iv_j}^2 \psi
      \ + \ T_{-(\sigma+s)v} \partial_{v_iv_j}^2 \psi\Big]
    \Big\}. \label{eqn:Gamma2}
  \end{eqnarray}
  Using \eqref{eqn:Gamma2}, we obtain for all $\psi \in
  L^\infty(\R^+;W^{2,1}(Q))$,
  \begin{equation}
    \label{eqn:Gamma3}
    \|  \Gamma_{t-\varepsilon^2\sigma,s}^\varepsilon
    \Gamma_{t,\sigma}^\varepsilon \psi \|_{L^\infty(\R_t^+\times
      [0,\uptau]_\sigma \times[0,2\uptau]_s; L^1(Q))}  \lesssim
    \frac{\uptau^2}{\varepsilon^2} \|E \|_{L^\infty(\R^+\times
      \R^+;W^{1,\infty}(\mathbb{T}^d))}^2  \| \psi
    \|_{L^\infty(\R^+;W^{2,1}(Q))}.
  \end{equation}
  Using \eqref{eqn:Gamma3} we obtain for all $\varphi \in
  L^\infty(\R^+;W^{3,1}(\R^d))$,
  \begin{equation}
    \label{eqn:Gamma4}
    \|  \Gamma_{t-\varepsilon^2\sigma,s}^\varepsilon
    \Gamma_{t,\sigma}^\varepsilon \mathcal{N}_t^\varepsilon
    \varphi \|_{L^\infty(\R_t^+\times [0,\uptau]_\sigma
      \times[0,2\uptau]_s; L^1(Q))} \lesssim
    \frac{\uptau^2}{\varepsilon^3} \|E \|_{L^\infty(\R^+\times
      \R^+;W^{2,\infty}(\mathbb{T}^d))}^3  \| \varphi
    \|_{L^\infty(\R^+;W^{3,1}(\R^d))}.
  \end{equation}  
  Combining \eqref{eqn:remainder:3} and \eqref{eqn:Gamma4}, we obtain
  from   hypothesis $({\rm H}4)$ the final estimate \eqref{eqn:remainder},
  which ends the proof of Lemma~\ref{lem:remainder}.
\end{proof}

We are now able to conclude the proof of Theorem~\ref{thm:dloveiwtr} 
by showing that we can pass to the limit $\varepsilon \rightarrow 0$ in \eqref{eqn:EAV_2}. 
From \eqref{eqn:EAV_2} and  \eqref{rhsve}, we have for all $\varphi
\in\mathcal{D}(\R^+\times  \R^d)$,
\begin{equation}
  \label{wfEAV_2}
   \int_{\R^+}dt \int_{\R^d} dv\,  \mathbb{E}\left[\fint dx\,f^\varepsilon\right] \partial_t \varphi
 -  \int_{\R^+}dt \int_{\R^d}dv\, \nabla_v\varphi \cdot
  \mathbb{E}\left[\fint dx\, \frac{f^\varepsilon E^\varepsilon }{\varepsilon}\right]=0,
\end{equation}
with
\begin{multline}
  \int_{\R^+}dt \int_{\R^d}dv\, \nabla_v\varphi \cdot
  \mathbb{E}\left[\fint dx\, \frac{f^\varepsilon E^\varepsilon }{\varepsilon}\right]\\
  =\nu_t^\varepsilon\left(\mathbb{E}[ f^\varepsilon], \varphi\right)+
  \nu_t^\varepsilon\left(\mathbb{E}\left[S_{2\varepsilon^2\uptau}^\varepsilon
  f^\varepsilon(t-2\varepsilon^2\uptau) -f^\varepsilon(t)\right], \varphi\right)+
  \mu_t^\varepsilon(\varphi).
\label{rhsve2}
\end{multline}
From Proposition~\ref{prop:WPV}, we have $\mathbb{E}[ f^\varepsilon]
 \rightharpoonup f $ in $L^\infty(\R^+\times Q)$ weak--$\ast$, as
 $\varepsilon \rightarrow 0$. Then, we
 obtain $\mathbb{E}[ \fint dx\,f^\varepsilon]= \fint dx\,\mathbb{E}[
   f^\varepsilon] \rightharpoonup \fint dx\, f = f$ in
 $L^\infty(\R^+\times \R^d)$ weak--$\ast$, as $\varepsilon \rightarrow
 0$. Using this weak limit, we  obtain for the first term of \eqref{wfEAV_2},
\begin{equation}
  \label{FTODEI}
 \int_{\R^+}dt \int_{\R^d} dv\,  \mathbb{E}\left[\fint dx\,f^\varepsilon\right] \partial_t \varphi
  \longrightarrow
  \int_{\R^+}dt \int_{\R^d} dv\, f \partial_t \varphi, \quad \mbox{ as }
  \varepsilon \rightarrow 0.
\end{equation}
Using definitions \eqref{nuform} and \eqref{def:Thetaeps} of respectively
the bilinear form $\nu_t^\varepsilon$ and the operator $\Theta_t^\varepsilon$,
we obtain
\begin{align}
  \nu_t^\varepsilon\left(\mathbb{E}[ f^\varepsilon], \varphi\right)
  &= \int_{\R^+}dt \int_{\R^d} dv  \fint dx\,
   \mathbb{E}[f^\varepsilon] \Theta_t^\varepsilon \varphi \nonumber\\
   &= \int_{\R^+}dt \int_{\R^d} dv\,\mathbb{E}\left[\fint dx\,
     f^\varepsilon\right] \Theta_t^\varepsilon \varphi.
   \label{eqn:wfdt}
\end{align}
Moreover, from Lemma~\ref{lem:CVTheta}, we obtain $
\Theta_t^{\varepsilon} \varphi \rightarrow
\Theta_t^{0}\varphi$ in $L^1(\R^+\times \R^d)$ strong,
as $\varepsilon \rightarrow 0$. Using the weak limit
$\mathbb{E}[\fint dx\,f^\varepsilon] \rightharpoonup  f$ in $L^\infty(\R^+\times
\R^d)$ weak--$\ast$, as $\varepsilon \rightarrow 0$, we can then pass to
the limit in \eqref{eqn:wfdt} or in the first term of the right-hand side of
\eqref{rhsve2} and we obtain
\begin{equation}
  \nu_t^\varepsilon\left(\mathbb{E}[ f^\varepsilon], \varphi\right)
  \longrightarrow
  \int_{\R^+}dt \int_{\R^d}dv\, f 
  \Theta_t^{0}\varphi,
  \quad \mbox{ as }
  \varepsilon \rightarrow 0.
   \label{eqn:wfdt2}
\end{equation}
Using Lemma~\ref{lem:SecTerm}, the second term of
the right-hand side of \eqref{rhsve2} vanishes as $\varepsilon
\rightarrow 0$. Finally, for the third term of the right-hand side of
\eqref{rhsve2}, we obtain from Lemma~\ref{lem:remainder} that
$\mu_t^\varepsilon  \rightharpoonup 0$ in $\mathcal{D}'(\R^+\times
\R^d)$, as $\varepsilon \rightarrow 0$. Therefore, we obtain from
\eqref{rhsve2} and  \eqref{eqn:wfdt2},
\begin{equation}
   \label{wfdduhamel5:2}
   \int_{\R^+}dt \int_{\R^d}dv\, \nabla_v\varphi \cdot
   \mathbb{E}\left[\fint dx\, \frac{f^\varepsilon E^\varepsilon }{\varepsilon}\right]
   \longrightarrow
   \int_{\R^+}dt \int_{\R^d}dv\, f 
  \Theta_t^{0}\varphi, \quad \mbox{ as }
  \varepsilon \rightarrow 0,
\end{equation}
where
\begin{equation}
  \int_{\R^+}dt \int_{\R^d}dv\, f 
  \Theta_t^{0}\varphi =
   \int_{\R^+}dt \int_{\R^d}dv\, \varphi
    \nabla_v \cdot \left( \left(\int_0^{\uptau}d\sigma\,
   \mathcal{R}_\uptau(t,t, \sigma, \sigma v) \right) \nabla_v  f
   \right).\label{wflimit}
\end{equation}
Passing to the limit $\varepsilon \rightarrow 0$ in \eqref{wfEAV_2},
and using \eqref{FTODEI}-\eqref{wflimit},  we obtain
\[
\partial_t f - \nabla_v \cdot \left( \left(\int_0^{\uptau} d\sigma\,
\mathcal{R}_\uptau(t,t,\sigma,\sigma v)\right) \nabla_v  f
\right)=0, \quad \mbox{ in }\  \mathcal{D}'(\R^+ \times \R^d),
\]
which shows \eqref{def:diffeq} with \eqref{def:CoefDiff}.

It remains to show  time continuity of the limit point $f$.  From \eqref{eqn:EAV_2}
and the convergence result  \eqref{wfdduhamel5:2}, we have  for all
$\varphi \in\mathcal{D}(\R^d)$,
\[
\frac{d}{dt} \int_{\R^d}dv\,\varphi \mathbb{E}\left[\fint dx\, f^\varepsilon\right]
=
\int_{\R^d}dv\, \nabla_v \varphi \cdot
\mathbb{E}\left[\fint dx\, \frac{f^\varepsilon E^\varepsilon }{\varepsilon}\right]
\leq C(\varphi), \quad \forall t \in \R^+,
\]
where the constant $C(\varphi)>0$ depends on $\varphi$, but not on $t$ and
$\varepsilon$.  Therefore we obtain
\begin{multline*}
\left | \int_{\R^d} dv \,\left ( \mathbb{E}\left[\fint dx\, f^\varepsilon(t)\right] -
\mathbb{E}\left[\fint dx\, f^\varepsilon(t-2\varepsilon^2\uptau)\right]\right) \varphi
\right| \\ \leq \left | \int_{t-2\varepsilon^2\uptau}^{t} ds \int_{\R^d}
dv \fint dx\,\, \varepsilon^{-1}\mathbb{E}[E^\varepsilon(s,x)\cdot \nabla_v
  f^\varepsilon(s,x,v)] \varphi(v) \right| \\ \leq
\varepsilon^{-1}\|f_0^\varepsilon \|_{L^\infty(Q)}
\int_{t-2\varepsilon^2\uptau}^{t}ds\int_{\R^d}dv \fint dx\, |\nabla_v
\varphi(v)|\mathbb{E}[|E^\varepsilon(s,x)|] \\ \leq
2\varepsilon\uptau C_0C_E \| \varphi \|_{W^{1,1}(\R^d)} \longrightarrow 0,
\quad \mbox{ as } \varepsilon\rightarrow 0.
\end{multline*}
Therefore the set $\{\int_{\R^d} dv\, \mathbb{E}[\fint dx\, f^\varepsilon(t)]
\varphi \}_{\varepsilon >0}$ is uniformly equi-continuous in time,
and by Ascoli-Arzela theorem this set is relatively compact in
$\mathscr{C}([0,T])$.  Then, there exists  a subsequence,  still
labeled by $\varepsilon$ such that $\int_{\R^d} dv\, \mathbb{E}[\fint dx\,
  f^\varepsilon(t)] \varphi $ converges uniformly in time to  $
\int_{\R^d}  f(t) \varphi dv$, with in particular
$f_{|_{t=0}}=\fint dx\,f_0$. Finally the bound
\begin{eqnarray*}
\left\| \mathbb{E}\left[ \fint dx\,f^\varepsilon\right]  \right\|_{L^\infty(0,T; L^p(\R^d))}
&\leq& (2\pi)^{-d/p}\,
\mathbb{E}\left[  \|f^\varepsilon  \|_{L^\infty(0,T; L^p(Q))} \right]\\
&=&(2\pi)^{-d/p}\, \mathbb{E}\left[  \|f_0^\varepsilon  \|_{L^p(Q)} \right] \leq (2\pi)^{-d/p} C_0<
\infty,
\end{eqnarray*}
(obtained by using H\"older's inequality)
and the density of $\mathcal{D}(\R^d)$ in $L^{q}(\R^d)$ with $1/p+1/q=1$ and
$1<q<\infty$, imply by standard arguments that $\mathbb{E}[
  \fint dx\, f^\varepsilon(t)]  \rightharpoonup f $ in
$\mathscr{C}([0,T]; L^p(\R^d)-weak)$, for all $T>0$ and
$1<p<\infty$. This completes the proof of Theorem~\ref{thm:dloveiwtr}.

\subsection{Links with some kinetic turbulence theories of plasma physics}
\label{s:LinkKT}
In this section we relate the results of Section~\ref{s:mthm} to two
kinetic turbulence theories of plasma physics.
First, when the
autocorrelation time of particles $\uptau$ is finite, our result leads
to a diffusion matrix, which is reminiscent of the diffusion matrix of
the resonance broadening theory  \cite{DU66, Aam67, Wei69, Pri69, Dav72}, a
refinement of the quasilinear theory. This diffusion matrix falls into
the framework of  Theorem~\ref{thm:dloveiwtr}.
Second, even if Theorem~\ref{thm:dloveiwtr} and in particular hypothesis
$({\rm H}2)$ hold only for $\uptau$ finite, it is worthwhile to pass, at least
formally, to the limit $\uptau \rightarrow +\infty$.
When the autocorrelation time of particles $\uptau$ tends to infinity, we recover
formally the structure of the diffusion matrix of the quasilinear
theory \cite{DP62, VVS62} from the diffusion matrix
\eqref{def:CoefDiff}. 

\subsubsection{Resonance broadening like theory: Finite $\uptau$}
\label{ss:RBT}
Here, we show how to relate the diffusion matrix \eqref{def:CoefDiff}
to the diffusion matrix obtained in the resonance broadening theory  \cite{DU66, Aam67, Wei69, Pri69, Dav72}.
The diffusion matrix \eqref{def:CoefDiff} obtained in Theorem~\ref{thm:dloveiwtr}, can be
recast as
\begin{equation}
  \label{eqn:coefd1_1}
  \mathscr{D}_\uptau(t,v) = \int_{0}^\uptau d\sigma\,
  \mathbb{E}[  E(t,0,x)\otimes E(t,-\sigma,x-\sigma v)].
\end{equation}
Introducing the Fourier series decomposition of $E$,
\[
E(t,\tau,x) =\sum_{k\in \Z^d} e^{\,{\rm i} k\cdot
  x}\widehat{E}(t,\tau,k),
\]
we can suppose without loss of generality that
$\widehat{E}(t,\tau,k)=e^{-{\rm i}\omega(k)\tau}\widetilde{E}(t,\tau,k)$ where
the real-valued function $\Z^d \ni k\mapsto\omega(k) \in \R $ is odd,
i.e.  $\omega(-k)=-\omega(k)\ $ for all $\ k\in \Z^d$. This
transformation, which can be seen as a WKB ansatz, is just a change of unknown functions. In dimensional
variables,  $\omega(k)$ should scale as $\omega_p$, which means that
$\tau$ (respectively, $t$) represents the fast (respectively, slow) time variable.
In the framework
of the resonance broadening theory of plasma physics, which is self-consistent,
frequencies $\omega(k)$ are given by the resolution of the dispersion
relation \eqref{DEW} or by its approximation \eqref{BGR}. Since here
we work in a non-self-consistent frame, we suppose that frequencies
$\omega(k)$ are simply given by a suitable function of $k$, which is
regular and bounded with respect to $k$.  In the same spirit as
 assumption $({\rm H}3)$, we now make the following assumption:

\begin{itemize}
\item[$({\rm H}3')$:] There exist
  a non-negative real-valued function $\mathcal{E}(t,k):\R^+\times \Z^d \rightarrow \R^+$,
  with $\mathcal{E}(t,k)=\mathcal{E}(t,-k)$ and  $|k|^2|\mathcal{E}(t,k)|^{1/2}\in L^\infty(\R^+;\ell^1(\Z^d))$, and
  a bounded function 
  $A_\uptau(\tau,k):[-\uptau,\uptau]\times \Z^d\rightarrow \R^+$, even and compactly supported in $\tau$, such that
  \begin{equation*}
    \label{decorrel}
    \mathbb{E}\big[\widetilde{E}(t,\tau,k)\otimes\widetilde{E}(t,\sigma,k')\big]
    = A_\uptau(\tau-\sigma,k)\mathcal{E}(t,k)\,  \frac{k \otimes k}{|k|^2} \,\delta(k+k').
  \end{equation*}
\end{itemize}
The term $\delta(k+k')$, which provides spatial homogeneity, is reminiscent of what plasma physicists
call the random phase approximation. Indeed the random phase approximation
assumes that $\widetilde{E}(t,\tau,k) \propto \exp(\,{\rm i}\phi_k)$,
where $(\phi_k)_{k\in \Z^d}$ are independent random variables equidistributed
on $[0,2\pi]$ such that $\phi_{-k}=-\phi_{k}$. The matrix $k \otimes k/|k|^2$
means that we choose an electric field which is the gradient of an electric potential.
The function $A_\uptau$ corresponds to a
time autocorrelation function.  We suppose that the function $\tau\mapsto
A_\uptau(\tau,k)$ is bounded, even, and with compact support included in
$[-\uptau,\uptau]$, for all $k\in\Z^d$. The function $\mathcal{E}(t,k)$ corresponds to the energy of the $k$-th mode of the
electric field, whose time scale of evolution in
dimensional variables is $\tau_L$, i.e. a slow time scale in comparison to $1/\omega_p$
(remember that $1/(\omega_p\tau_L)=\varepsilon^2$).
In the self-consistent framework of resonance broadening
theory, the function $\mathcal{E}(t,k)$ are given by equation \eqref{EDOE}
in which the energy of the $k$-th mode of the electric field, $|E(t,k)|^2$, is replaced by  $\mathcal{E}(t,k)$ and the grow rate $\gamma(t,k)$
is given by a $d$-dimensional version of \eqref{DEW}
(or its approximation \eqref{QLGR}). Since here we
work in a non-self-consistent frame, we also suppose that energy amplitudes
 $\mathcal{E}(t,k)$ are simply given by a suitable bounded function, which
decreases fast enough in the variable $k$ to satisfy hypothesis  $({\rm H}4)$. Hypothesis $({\rm H}3')$,
which is a particular case of hypothesis $({\rm H}3)$,
is consistent with the spatio-temporal homogeneity property
of the turbulence.
Actually, the property $({\rm H}3')$ implies the property $({\rm H}3)$, in other words the property
$({\rm H}3')$ is less general than the property $({\rm H}3)$.
Indeed, we obtain  from $({\rm H}3')$,
\[
\mathbb{E}[E(t,\tau,x) \otimes E(t,\sigma,y)]=\sum_{k\in\Z^d}
A_\uptau(\tau-\sigma,k) \mathcal{E}(t,k)  \frac{k\otimes k}{|k|^2} e^{\,{\rm
    i} k\cdot(x-y)}e^{-{\rm i}\omega(k)(\tau-\sigma)},
\]
from which we easily observe that the spatio-temporal autocorrelation function
$\mathbb{E}[E(t,\tau,x) \otimes E(t,\sigma,y)]$ is invariant under time
and space translations. In terms of the autocorrelation matrix
$\mathcal{R}_\uptau$,  hypothesis  $({\rm H}3')$ is equivalent to
\begin{equation}
  \label{ROQL}
\mathcal{R}_\uptau(t,t,\sigma,x)= \sum_{k\in\Z^d} e^{\,-{\rm
    i}\omega(k)\sigma}A_\uptau(\sigma,k) \mathcal{E}(t,k) \frac{k\otimes k}{|k|^2} e^{\,{\rm i} k\cdot x}.
\end{equation}
Using assumption $({\rm H}3')$ in \eqref{eqn:coefd1_1}, where the
electric field $E$ is written in terms of its Fourier series
decomposition, we obtain
\begin{equation}
  \label{eqn:coefd1_2}
  \mathscr{D}_\uptau(t,v) = \sum_{k\in \Z^d} \mathcal{E}(t,k)\frac{k\otimes k}{|k|^2}\int_{0}^\uptau  d\sigma\, e^{-{\rm i} (
    \omega(k)- k\cdot v)\sigma} A_\uptau(\sigma,k).
\end{equation}
The diffusion matrix \eqref{eqn:coefd1_2} can be rewritten as
\begin{equation}
  \label{eqn:DQLR}
  \mathscr{D}_\uptau(t,v) = \sum_{k\in \Z^d}  \mathcal{E}(t,k) \frac{k\otimes k}{|k|^2} R_\uptau( \omega(k)- k\cdot v,k),
\end{equation}
where the resonance function $R_\uptau$ is given by 
\begin{eqnarray}
  \label{RF}
  R_\uptau( \omega(k)- k\cdot v,k)&=&
  \Re e\int_{0}^\uptau d\sigma\, e^{-{\rm i} ( \omega(k)-k\cdot
    v)\sigma} A_\uptau (\sigma,k) \nonumber\\ &=&
  \frac{1}{2}\int_{-\infty}^{+\infty} d\sigma\, e^{-{\rm i} ( \omega(k)- k\cdot
    v)\sigma}\,{ A_\uptau}(\sigma,k).
\end{eqnarray}
In \eqref{RF} we have used the even parity and the compact support (included in $[-\uptau, \uptau]$)
of the real function $\tau\mapsto A_\uptau(\tau,k)$ to obtain the last equality.\\

Substituting hypothesis $({\rm H3'})$ for hypothesis $({\rm H3})$ in Theorem~\ref{thm:dloveiwtr} we obtain

\begin{corollary}
\label{cor2:dloveiwtr}
Let $E$ be an integrable random vector field satisfying the assumptions
$({\rm H}1)$-$({\rm H}2)$ and $({\rm H}3')$-$({\rm H}4)$, and let $E^\varepsilon$ be given by
\eqref{def:Eepsilon}.  Let $\{f_0^\varepsilon\}_{\varepsilon>0}$ be a
sequence of independent random non-negative initial data  and $C_0$ be a
positive constant such that for a.e. $\upomega \in \Upomega$,
$\|f_0^\varepsilon \|_{L^1(Q)} +\|f_0^\varepsilon \|_{L^\infty(Q)} \leq C_0<\infty$.
Let $\mathscr{D}_\uptau$ be the matrix-valued function defined
by \eqref{eqn:DQLR}-\eqref{RF}. Let $f^\varepsilon$ be the unique weak
solution of  the Vlasov equation \eqref{eqn:V2}, with  initial data
${f^\varepsilon}_{|_{t=0}}=f_0^\varepsilon$. Then
\begin{itemize}
\item[1.] Up to extraction
of a subsequence,
 $\mathbb{E}[f_0^\varepsilon]$ converges in
$L^\infty(Q)$ weak--$\ast$ to a function $f_0\in L^1\cap L^\infty(Q)$,
$\mathbb{E}[f^\varepsilon]$  converges in $L^\infty(\R^+;L^\infty(Q))$
weak--$\ast$ to a function $f\in L^\infty(\R^+; L^1\cap
L^\infty(\R^d))$, and $\mathbb{E}[\fint dx\, f^\varepsilon]$
converges in $L^\infty(\R^+;L^\infty(\R^d))$ weak--$\ast$ to
$f$. Moreover $\mathbb{E}[\fint dx\, f^\varepsilon]$  converges in
$\mathscr{C}(0,T;L^p(\R^d)-weak)$ to $f$, for $1<p<\infty$ and for all
$T>0$. The limit point $f=f(t,v)$ is solution of the following
diffusion equation in the sense of distributions:
\begin{eqnarray*}
&&\partial_t f -\nabla_v \cdot(\mathscr{D}_\uptau\nabla_v f) =0, \quad \mbox{
    in } \  \mathcal{D}'(\R^+\times \R^d),  \\ &&
  {f}_{|_{t=0}} = \fint dx\, f_0.
\end{eqnarray*}  
\item[2.]
  The diffusion matrix $\mathscr{D}_\uptau$ is symmetric, non-negative and analytic
  in the velocity variables.
\end{itemize}  
\end{corollary}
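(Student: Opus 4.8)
The plan is to treat the two assertions of Corollary~\ref{cor2:dloveiwtr} separately, reducing the first to Theorem~\ref{thm:dloveiwtr} and establishing the second by hand from the explicit formula \eqref{eqn:DQLR}.

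For the first assertion, I would begin by checking that $({\rm H}3')$ is merely a special instance of $({\rm H}3)$: carrying out the expectation in $({\rm H}3')$ and comparing with \eqref{H3} yields the closed form \eqref{ROQL} for the autocorrelation matrix $\mathcal{R}_\uptau$, which is manifestly invariant under space and time translations and, thanks to the boundedness and $[-\uptau,\uptau]$-support of $A_\uptau$ together with $|k|^2|\mathcal{E}|^{1/2}\in L^\infty(\R^+;\ell^1(\Z^d))$, inherits the regularity and support properties $i)$--$ii)$ of Proposition~\ref{prop:Dprop}. Since $({\rm H}1)$, $({\rm H}2)$ and $({\rm H}4)$ are assumed outright, all hypotheses of Theorem~\ref{thm:dloveiwtr} hold, so its conclusions --- the weak-$\ast$ and $\mathscr{C}(0,T;L^p\!-\!\mathrm{weak})$ convergences of $\mathbb{E}[f_0^\varepsilon]$, $\mathbb{E}[f^\varepsilon]$ and $\mathbb{E}[\fint dx\,f^\varepsilon]$, and the diffusion equation --- transfer verbatim. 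It then remains only to identify the diffusion matrix: substituting \eqref{ROQL} into the definition \eqref{def:CoefDiff} and integrating in $\sigma$ reproduces exactly the chain \eqref{eqn:coefd1_1}--\eqref{eqn:coefd1_2}, giving \eqref{eqn:DQLR} with the resonance function \eqref{RF}.

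For the second assertion, symmetry is immediate: in \eqref{eqn:DQLR} every summand is the rank-one symmetric matrix $k\otimes k/|k|^2$ multiplied by the scalar $\mathcal{E}(t,k)R_\uptau(\omega(k)-k\cdot v,k)$, so $\mathscr{D}_\uptau=\mathscr{D}_\uptau^T$. Non-negativity then follows by combining this symmetry with property $iv)$ of Proposition~\ref{prop:Dprop}, which already gives $X^T\mathscr{D}_\uptau X\geq 0$ for the symmetric part; alternatively, one argues directly that $R_\uptau(\cdot,k)\geq 0$, since by \eqref{RF} it is half the Fourier transform of the time autocorrelation $A_\uptau(\cdot,k)$, which is positive-definite as the autocovariance of the scalar process $\tfrac{k}{|k|}\cdot\widetilde{E}(t,\cdot,k)$, so that Bochner's theorem yields $X^T\mathscr{D}_\uptau X=\sum_k\mathcal{E}(t,k)\,|k|^{-2}(k\cdot X)^2\,R_\uptau(\omega(k)-k\cdot v,k)\geq 0$.

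The analyticity in $v$ is the delicate point. For each fixed $k$, since $\sigma\mapsto A_\uptau(\sigma,k)$ is bounded and supported in $[-\uptau,\uptau]$, the Paley--Wiener theorem shows that $\xi\mapsto R_\uptau(\xi,k)=\tfrac12\int_{-\uptau}^{\uptau}e^{-\mathrm{i}\xi\sigma}A_\uptau(\sigma,k)\,d\sigma$ extends to an entire function of exponential type $\uptau$; composing with the affine map $v\mapsto\omega(k)-k\cdot v$ shows that each summand of \eqref{eqn:DQLR} is real-analytic, indeed entire, in $v$. I would then conclude by the Weierstrass theorem, showing that the series of these entire continuations converges locally uniformly on a complex neighborhood of $\R^d$ and hence defines a holomorphic, a fortiori real-analytic, limit. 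Here lies the main obstacle: under complexification $v\mapsto v+\mathrm{i}w$ the $k$-th term grows like $\mathcal{E}(t,k)e^{\uptau|k||w|}$, so the merely polynomial-type decay encoded in $|k|^2|\mathcal{E}|^{1/2}\in\ell^1(\Z^d)$ does not by itself suffice to sum the continuations. Controlling this requires exploiting that the spectrum $\mathcal{E}(t,\cdot)$ is concentrated on, or decays fast enough over, a band of modes --- the argument being trivial when $k\mapsto\mathcal{E}(t,k)$ has finite support, since the series is then a finite sum of entire functions --- and the quantitative balance between the exponential type $\uptau$ and this spectral decay is the crux of the estimate.
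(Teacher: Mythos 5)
Your treatment of point 1 and of symmetry and non-negativity coincides with the paper's own proof: the paper likewise reduces point 1 to Theorem~\ref{thm:dloveiwtr} after noting that $({\rm H}3')$ implies $({\rm H}3)$ (via the explicit form \eqref{ROQL} of $\mathcal{R}_\uptau$ and the chain \eqref{eqn:coefd1_1}--\eqref{eqn:coefd1_2}), gets reality of $R_\uptau$ from the even parity of $A_\uptau$, and gets non-negativity from Bochner's theorem combined with the positive-definiteness of $\sigma\mapsto A_\uptau(\sigma,k)$, established by exactly the computation you sketch, namely
$\frac{|k\cdot X|^2}{|k|^2}\,\mathcal{E}(t,k)\int d\tau\int d\sigma\, A_\uptau(\tau-\sigma,k)\varphi(\tau)\varphi(\sigma)=\mathbb{E}\big[\,\big|\int d\tau\,\widetilde{E}(t,\tau,k)\cdot X\,\varphi(\tau)\big|^2\,\big]\geq 0$,
using $\widetilde{E}(t,\tau,-k)=\widetilde{E}^\ast(t,\tau,k)$ (your phrase ``autocovariance of the scalar process'' should be understood with this Hermitian pairing of the modes $k$ and $-k$, since the process is complex; this is a phrasing issue, not a gap). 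Your alternative route via symmetry plus property $iv)$ of Proposition~\ref{prop:Dprop} is also legitimate, since $({\rm H}3')$ places you inside the hypotheses of that proposition.

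On analyticity you are in fact more careful than the paper. The paper's entire justification is the single sentence that $R_\uptau$ is the Fourier transform of the compactly supported function $A_\uptau$ --- precisely the term-wise Paley--Wiener step you carried out --- and it passes silently from the entirety of each summand $\mathcal{E}(t,k)\,\frac{k\otimes k}{|k|^2}\,R_\uptau(\omega(k)-k\cdot v,k)$ to analyticity of the series \eqref{eqn:DQLR}. Your objection is well-founded: on a complex strip $v+{\rm i}w$ the $k$-th continuation is only controlled by $\mathcal{E}(t,k)\,e^{\uptau|k||w|}$, so locally uniform convergence of the continued series requires exponential decay (or finite support) of the spectrum $\mathcal{E}(t,\cdot)$, which the stated hypothesis $|k|^2|\mathcal{E}(t,k)|^{1/2}\in L^\infty(\R^+;\ell^1(\Z^d))$ does not provide; indeed, since the $n$-th $v$-derivative of the $k$-th term costs a factor $(\uptau|k|)^n$, merely polynomial spectral decay does not even guarantee $C^\infty$ regularity of the sum, let alone Gevrey or analytic bounds $\sum_k\mathcal{E}(t,k)(\uptau|k|)^n\lesssim C M^n n!$. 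Equivalently, analyticity of $v\mapsto\mathscr{D}_\uptau(t,v)$ hinges on analyticity of $x\mapsto\mathcal{R}_\uptau(t,t,\sigma,x)$, whereas $({\rm H}4)$ only imposes $W^{2,\infty}$ regularity in $x$. So the ``crux'' you identify is a genuine terseness in the paper's own proof rather than a deficiency of your reduction: as written, the corollary's analyticity assertion holds term by term, and for the full series only under an implicit strengthening (band-limited or exponentially decaying $\mathcal{E}(t,\cdot)$) of the displayed decay hypothesis. Your proposal is therefore as complete as the published argument and more explicit about the missing quantitative estimate.
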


\begin{proof}
  The proof of point 1 of Corollary~\ref{cor2:dloveiwtr} is the same
  as the proof of Theorem~\ref{cor2:dloveiwtr}. It remains to deal with the
  proof of point 2. Symmetry of the diffusion matrix $\mathscr{D}_\uptau$ is obvious,
  while reality of the function  $R_\uptau$ defined by \eqref{RF} follows from the parity (even) of the
  function  $\sigma\mapsto A_\uptau(\sigma,k)$.
  Non-negativeness of the diffusion matrix $\mathscr{D}_\uptau$ comes from the 
  non-negativeness of the function  $R_\uptau$, which results  
  from both the Bochner  theorem (see, e.g., Theorem~2~p.~346 in \cite{Yos80}) and
  the fact that the function $\sigma\mapsto A_\uptau(\sigma,k)$ is positive definite
  in the following sense:
  \begin{equation}
    \label{pdA}
  \int_\R d\tau \int_\R d \sigma A_{\uptau}(\tau-\sigma,k) \varphi(\tau) \varphi(\sigma) \geq 0,   
  \end{equation}
  for every  continuous function $\varphi$ with compact support.
  Indeed, from  assumption $({\rm H}3')$ and using  $\widetilde{E}(t,\tau,-k)=\widetilde{E}^\ast(t,\tau,k)$,
  for all vector $X\in\R^d\setminus\{0,k^\perp\}$
  and $\varphi \in \mathscr{C}_c^0(\R;\R)$ (the set of continuous and compactly supported functions
  from $\R$ to $\R$), we obtain
  \begin{multline*}
    \frac{|k\cdot X|^2}{|k|^2} \mathcal{E}(t,k)
    \int_\R d\tau \int_\R d \sigma A_{\uptau}(\tau-\sigma,k) \varphi(\tau) \varphi(\sigma)
    \\= \mathbb{E}\left[
      \left(\int_\R d\tau \widetilde{E}(t,\tau,k)\cdot X \varphi(\tau)\right)
      \left(\int_\R d\sigma \widetilde{E}(t,\sigma,k)\cdot X \varphi(\sigma)\right)^\ast
     \right]
   \geq 0,
  \end{multline*}
  which implies \eqref{pdA}. 
  Finally,  $\mathscr{D}_\uptau(t,v)$,
  defined by \eqref{eqn:DQLR}-\eqref{RF},
  is an analytic function in $v$ because the function $R_\uptau$ is the Fourier transform
  of the compactly supported function $A_\uptau$.
\end{proof}

The diffusion matrix \eqref{eqn:DQLR} is reminiscent of
the diffusion matrix of resonance broadening theory \cite{DU66,Wei69, Dav72}. 
In Remark~\ref{rk:RBT},
we present very briefly the result of the resonance broadening theory to point out
the similarities and the differences between the resonance broadening effect
of  Corollary~\ref{cor2:dloveiwtr} and that of the resonance broadening theory.
As already mentioned in Section~\ref{ss:LITA}, from a mathematical point of view,
the consequence of the finite-$\uptau$ or resonance broadening effect of  Corollary~\ref{cor2:dloveiwtr}
is a regularity improvement of the diffusion matrix in the velocity variables
(by comparison with the diffusion matrix of the quasilinear theory, see Section~\ref{ss:QLT}). 
From a physical point of view, our broadening resonance effect corresponds to a broadening in time frequency of a width
$\Delta \omega =1/\uptau$ for the resonance $\omega(k) - k\cdot v =0$
($\Delta \omega=0$ for the quasilinear theory, see
Section~\ref{ss:QLT}).
Here, up to hypothesis $({\rm H}3')$, we can choose freely the time autocorrelation function 
$A_\uptau$. From \eqref{RF}, we observe that the resonance function $R_\uptau$ is
obtained as the Fourier transform of $A_\uptau/2$. In particular, we can recover
diffusion matrices \eqref{DifMGen} and \eqref{DifMGen_2} obtained
respectively in Section~\ref{ss:LITA} and  Corollary~\ref{cor:dloveiwtr}.

\begin{remark}(resonance broadening theory)
  \label{rk:RBT}
  The finite-$\uptau$ or resonance broadening effect obtained in  Corollary~\ref{cor2:dloveiwtr} is
  reminiscent of the resonance broadening theory \cite{DU66, Aam67,Wei69, Pri69, Dav72},
  but it is not exactly the same since the derivation of resonance broadening theory is
  quite different and more involved. The resonance broadening theory is
  a correction to the quasilinear theory, which takes
  into account the  particle diffusion coming from $\mathscr{D}_\uptau$, for
  calculating  $\mathscr{D}_\uptau$ itself.  Indeed, in the original
  derivation of the quasilinear theory, the diffusion matrix of the
  quasilinear is obtained by assuming that the perturbed dynamics of
  particles (called fluctuations) can be approximated by the ballistic
  motion or the free flow because fluctuations are small enough. This
  approximation is consistent with the limit $\uptau \rightarrow
  +\infty$ (or $\tau_D \rightarrow +\infty$ in dimensional variables)
  since for such limit, particles follow almost straight lines.  If the
  fluctuation amplitude is sufficiently large and/or the wave spectrum
  is sufficiently narrow in $k$-space, the diffusion of particle
  trajectories can produce an appreciable broadening of wave-particle
  resonances $ \omega(k)- k\cdot v \simeq 0$, even when
  $\mathcal{E}_{el}/\mathcal{E}_{kin}\ll 1$.  To take these effects
  into account, the resonance broadening theory aims at modifying
  appropriately the diffusion matrix from the particle dynamics in a
  self-consistent way, i.e. by incorporating the diffusion of
  particle trajectories. Using a statistical approach \cite{DU66, Wei69, Dav72}, the
  broadening resonance theory states that the autocorrelation function
  should be given by
  \begin{equation}
    \label{ARBT}
    \begin{tabular}{l}
      $A_\uptau(\sigma,k)=\exp(-(\sigma/\uptau)^3/3), \ \ \mbox{ with }
      \ \  \uptau = (k\otimes k : \mathscr{D}_{\rm rb}(\sigma,v))^{-1/3}, \ \ \mbox{ i.e. }$ \vspace{2pt} \\
      $A_\uptau(\sigma,k)={A}_{\rm rb}(\sigma,k,\mathscr{D}_{\rm rb}(\sigma,v)):=\exp \left(-\frac{1}{3} k \otimes k:
      \mathscr{D}_{\rm rb}(\sigma,v) \sigma^3\right).$
      \end{tabular}
  \end{equation}
  We refer the reader to Remark~\ref{rem:RBT} below for a rough but
  short explanation of this kind of result in one dimension (see \cite{DU66, Wei69} for an
  original and detailed derivation). We now observe
  that the autocorrelation time $\uptau$ is no more a free parameter
  but is determined by the diffusion matrix $\mathscr{D}_{\rm rb}$. Likewise, the
  time autocorrelation function ${A}_{\rm rb}$ can no more be chosen freely
  but is an explicit function of the diffusion matrix $\mathscr{D}_{\rm rb}$.
  According to the resonance broadening theory, the diffusion matrix
  $\mathscr{D}_{\rm rb}$ is now a solution to the nonlinear functional integral
  equation given by
  \begin{eqnarray}
  \mathscr{D}_{\rm rb}(t,v) &=& \sum_{k\in \Z^d} \mathcal{E}(t,k)\, \frac{k \otimes
    k}{|k|^2}\, \Re e \int_{0}^\infty  d\sigma\, e^{-{\rm i} ( \omega(k)-k\cdot
    v)\sigma} {A}_{\rm rb}(\sigma,k,\mathscr{D}_{\rm rb}(\sigma,v))  \label{DRBT}\\
  &=&\sum_{k\in \Z^d} \mathcal{E}(t,k)\, \frac{k \otimes
    k}{|k|^2} R_{\rm rb}(\omega(k)-k\cdot
    v,k,\mathscr{D}_{\rm rb}). \nonumber
  \end{eqnarray}
  The resonance function $R_{\rm rb}$ can no more be seen as the Fourier transform (problem
  of convergence for negative time)
  of the time autocorrelation function ${A}_{\rm rb}$, but as the Laplace
  transform of it. The broadening of wave-particle
  resonance is produced by the term $k \otimes k:
  \mathscr{D}_{\rm rb}(\sigma,v) \sigma^3/3$, which can be seen an approximation of  the
  ensemble-average mean square deviation from the mean of particle
  trajectories in the turbulent electric field (see
  Remark~\ref{rem:RBT} for a rough but short explanation, and
  \cite{DU66, Wei69} for a more rigorous and exhaustive
  one). Resonance broadening theory would certainly deserve 
  a rigorous mathematical treatment, but it is out of the scope of this paper.
  This remark just aims at enlightening similarities and differences with the
  resonance broadening effect obtained in  Corollary~\ref{cor2:dloveiwtr}.
\end{remark}
  
\begin{remark}
  \label{rem:RBT}
  In this remark we roughly explain where typical expression \eqref{ARBT} comes from.
  Here, we give a short and simplified derivation of \eqref{ARBT} just to give
  a flavor. A more involved and detailed derivation can be found in \cite{DU66, Wei69}.
  In addition, without loss of generality, we  consider the dimension $d=1$
  to simplify the calculation.  In \eqref{RF} or \eqref{DRBT}, the term $ \exp({{\rm
  i}k v\sigma})$, can be seen as the result of approximating
  characteristic curves by the ballistic motion (like in the
  quasilinear theory), i.e., $ X(\sigma; 0,x,v)\simeq x+v\sigma $ and
  $ V(\sigma;0,x,v)\simeq v$. Indeed, we observe that,
  \[
  \exp({{\rm i}k v\sigma)} =  \mathbb{E}[\exp({\rm i}k\Delta
    X(\sigma))],
  \]
  with $(\Delta X(\sigma), \Delta V(\sigma)) =
  (X(\sigma)-X(0),V(\sigma)-V(0))= (v\sigma,v)$.  Considering a
  higher approximation of the characteristic curves (in particular
  an approximation that takes into account the diffusion of
  particles), we should add to the free-flow approximation a
  correction $(\delta X(\sigma), \delta V(\sigma))$, such that we
  have, $ X(\sigma; 0,x,v)\simeq x+v\sigma + \delta X(\sigma) $ and
  $ V(\sigma;0,x,v)=v+\delta V(\sigma)$. The autocorrelation
  function $\sigma\mapsto A_{\rm rb}(\sigma,k)$ is then defined  by 
  \begin{equation}
    \label{eqn:AARBT}
    A_{\rm rb}(\sigma,k):=\mathbb{E}[\exp({\rm i}k\delta X(\sigma))]=
    \mathbb{E}\left[\exp\left({\rm i}k\int_0^\sigma ds\, \delta V(s) 
        \right)\right].
  \end{equation}
  We now suppose that $\delta V(\sigma)$ is a Gaussian random
  function, with a probability distribution $\mathbb{P}$ given by
  \[
  \mathbb{P}(\delta V(\sigma)) = \frac{1}{\sqrt{2\pi
      \mathscr{D}(v)\sigma}} \exp\left(- \frac{(\delta V(\sigma))^2}{{
      2\mathscr{D}(v)\sigma}}\right),
  \]
  where the standard deviation or the diffusion coefficient
  $\mathscr{D}(v)$  is constant in time.
  Therefore we have,
  \begin{equation}
    \label{eqn:moms}
    \mathbb{E}[\delta V(\sigma)]=0, \quad \mathbb{E}[(\delta
      V(\sigma))^2]=\mathscr{D}(v)\sigma, \quad \mbox{ and }\quad
      \mathbb{E}[\delta V(\sigma)\delta
        V(\sigma')]=\mathscr{D}(v)\min(\sigma,\sigma'),
    \end{equation}
    which means that particle velocity follows a diffusion process
    characterized by the diffusion coefficient $\mathscr{D}$.  Since
    $\delta V(\sigma)$ is a Gaussian random function, using
    \eqref{eqn:moms}, we obtain from \eqref{eqn:AARBT},
    \begin{eqnarray*}
      A_{\rm rb}(\sigma,k)&=&  \mathbb{E}\left[\exp\left({\rm
          i}k\int_0^\sigma\delta V(s) ds \right)\right] \\ &=&
      \exp\left(-\frac12 k^2\int_0^\sigma ds \int_0^\sigma ds'\,
      \mathbb{E}[\delta V(s)\delta V(s')] \right) \\ &=&
      \exp\left(-\frac12 k^2\int_0^\sigma ds \int_0^\sigma ds'\,
      \mathscr{D}(v)\min(s,s') \right) \\ &=& \exp\left(-\frac16k^2
      \mathscr{D}(v)\sigma^3\right),  
    \end{eqnarray*}
   which gives, up to a mutiplicative constant in the exponential, the same result as
   \eqref{ARBT}. 
  \end{remark}

\subsubsection{Quasilinear theory: Infinite $\uptau$ }
\label{ss:QLT}
Here, we show that we can retreive formally the diffusion matrix
of the quasilinear theory  \cite{DP62,
  VVS62, Dav72} from the diffusion matrix \eqref{def:CoefDiff} or
\eqref{eqn:DQLR} by taking the formal limit $\uptau \rightarrow +\infty$.
For this, we choose an autocorrelation function $A_\uptau$, such that $A_\uptau
\rightarrow 1$ a.e. as $\uptau \rightarrow +\infty$
(e.g. $A_\uptau(\sigma,k)=\mathbbm{1}_{[-\uptau,\uptau]}(\sigma)$).
Then, letting $\uptau$ go to infinity we obtain in the sense of distributions,
\[
\lim_{\uptau \rightarrow +\infty}\int_{0}^\uptau d\sigma\, e^{-{\rm i} ( \omega(k)- k\cdot
  v)\sigma} A_\uptau (\sigma,k)  = \pi \delta( \omega(k)-
 k\cdot v) \, - \, {\rm i\,}{\rm
  p.v.}\left(\frac{1}{ \omega(k)- k\cdot v}\right),
\]
and 
\begin{equation}
  \label{slrf}
\lim_{\uptau \rightarrow +\infty}
R_\uptau( \omega(k)- k\cdot v,k) =\pi\delta (\omega(k)- k\cdot v).  
\end{equation}
Therefore, the diffusion matrix \eqref{eqn:DQLR} becomes the
quasilinear diffusion matrix of plasma physics literature (see, e.g.,
\cite{KT73, Dav72}),
\begin{equation}
  \label{eqn:DQL}
  \mathscr{D}_\infty(t,v) = \pi\sum_{k\in \Z^d} \mathcal{E}(t,k)\ \frac{k\otimes k}{|k|^2}
  \ \delta (\omega(k) - k\cdot v).
\end{equation}
The diffusion matrix \eqref{eqn:DQL} seems not very regular, since it
involves a sum of Dirac masses, namely resonances $\delta (
\omega(k)-k\cdot v)$.  Then, well-posedness of the diffusion equation
\eqref{def:diffeq} with such a diffusion matrix remains an open
issue at least for non-smooth distribution functions.
We note that we can also recover the quasilinear diffusion matrix  \eqref{eqn:DQL}
from the diffusion matrix $ \mathscr{D}_{\rm rb}$ of the resonance broadening theory (see Remark~\ref{rk:RBT})
by observing that $R_{\rm rb}(\omega(k) - k\cdot v,k,0)=\delta (\omega(k) - k\cdot v)$.
From \eqref{ARBT}, this still corresponds to an infinite autocorrelation time $\uptau$.
The limit $\uptau \rightarrow +\infty$ is a singular limit
from different points of view:
\begin{itemize}
\item[1.] When $\uptau \rightarrow \infty$, the autocorrelation
  matrix \eqref{ROQL} is no nore integrable with respect to correlation
  time $\sigma$,  but only locally integrable. This is the same for
  the  autocorrelation matrix $\mathcal{R}_\uptau$ constructed 
  in the Appendix~\ref{ACRE}. This loss of integrability entails
  a loss of regularity in the velocity variables for the diffusion matrix. This loss
  of regularity in velocity variables is even more striking
  when we observe the singular limit \eqref{slrf} for a smooth
  resonance function $R_\uptau$.
  \item[2.]  When $\uptau \rightarrow \infty$, hypothesis $({\rm H}2)$
  does not hold anymore. Indeed the stochastic electric field
  defined in Section~\ref{ss:tef} no longer satisfies a time decorrelation
  property since its decorrelation time tends to infinity. It is like
  falling back to the deterministic case.
\end{itemize}

 When $\uptau \rightarrow \infty$, the autocorrelation time of particles
 tends to infinity and the time decorrelation of the
 electric field defined in Section~\ref{ss:tef} occurs at infinite
 time. This can be interpreted as follows. The electric field becomes
 deterministic and particles trajectories are almost straight lines.
 This seems consistent with
 the fact that the original derivation of the QL theory performed
 by physicists  \cite{DP62, VVS62} is deterministic. Indeed this deterministic
 derivation is based on two main ingredients.  First the
 wave-particle interaction is assumed perturbative, and the
 perturbed dynamics of particles is approximated by the free flow
 or the ballistic motion. Second, all nonlinear wave-wave
 interactions, except for their effect on the space-averaged
 distribution function, are neglected. After the original 1962
 derivation which is deterministic, other derivations of the QL theory  (see, e.g.,
 \cite{Dav72, AAPSS75, Bal05}) appeal to some statistical arguments
 and decorrelation hypotheses, to establish the QL
 diffusion. Therefore, considering quasilinear theory as a
 probabilistic or deterministic theory remains an open question.
 
 Finally, we note that the resonance broadening theory \cite{DU66,
   Wei69} is actually a statistical (probabilistic) theory of the
 Vlasov equation and does not have a deterministic counterpart
 in the plasma physics literature. Nevertheless, in Section~\ref{ss:LITA}
 for the deterministic case, we have been able to introduce a finite
 autocorrelation time of particles $\uptau$, and to derive formally a diffusion
 matrix that is consistent with the quasilinear one in the limit
 $\uptau\rightarrow \infty$.

\section*{Acknowledgements}
The first author would like to acknowledge the Observatoire de la C\^ote d'Azur
and the Laboratoire  J.-L. Lagrange for their hospitality and
financial support. The authors would like to thank the anonymous reviewers
whose comments and suggestions helped to improve the final version of this article.
This work has been carried out within the framework of the EUROfusion Consortium and has received
funding from the Euratom research and training program 2014-2018 and 2019-2020 under grant agreement
No 633053. The views and opinions expressed herein do not necessarily reflect those of the European Commission.

\section*{Data Availability Statement}
Data sharing is not applicable to this article as no new data were created or analyzed in this study.

\appendix 
\section{An example of a random field $E$}
\label{ACRE}
In this appendix we construct an example of random electric field
$E=E(t,\tau,x)$ that satisfies hypotheses $({\rm H}1)$-$({\rm H}4)$
of Section~\ref{ss:tef}. For this, we are inspired by  
the Example~2 in \cite{PV03}. Let $r$ and $\varrho$ be positive real numbers such
that $\varrho \geq 1/r$ and $r\geq 1$. We set $\uptau=1/r +\varrho$. With this
decomposition of the autocorrelation time $\uptau$, we can choose
$\uptau$ as small as we want by taking finite but large $r$, and choose
also $\uptau$ as large as we want by taking any fixed $r$ and $\varrho$
large enough. Let $(T_k^n,X_k^n)\in \R^{1+d}$ with
$(n,k)\in \Z\times \Z^d$ be independent random variables equidistributed in
\begin{equation*}
\left\{  \frac{n}{r} + \left[-\frac{1}{2r}, \frac{1}{2r}\right] \right\} \times
\left\{ k + [-1/2,1/2]^d \right\}.
\end{equation*}
We consider also other independant random variables $\alpha_k^n$, with $(n,k)\in \Z\times \Z^d$,
such that $\mathbb{E}[\alpha_k^n]=0$ and  $\mathbb{E}[(\alpha_k^n)^2]=1$.  
Let $\eta\in \mathscr{C}_c^\infty(\R^+\times\R\times \R^d; \R)$ be a real scalar function
that is compactly supported in $\R^+\times[-\varrho/2,\varrho/2]\times \R^d$.
We define the random function $\mathcal{E}=\mathcal{E}(t,\tau,x)$ by
\begin{equation*}  
\mathcal{E}(t,\tau,x)= \sum_{(n,k)\in \Z^{1+d}} \alpha_{k}^n\eta(t,\tau-T_k^n,x-X_k^n).
\end{equation*}
Since $\eta$ has a compact support, the sum defining $\mathcal{E}$ in this equation is finite.
Obviously,  $\mathcal{E}\in \mathscr{C}_c^\infty(\R^+\times\R\times \R^d; \R)$
and  $\mathbb{E}[\mathcal{E}]=0$, which means that hypotheses $({\rm H}1)$ and $({\rm H}4)$
are satisfied. For a fixed $\tau\in \R$, the function
$\mathcal{E}(t,\tau,x)$ depends only on $(T_l^n,X_l^n,\alpha_l^n)$ with
\begin{equation*}
n\in \left[\tau-\frac{1}{2r}-\frac{\varrho}{2},\,\tau+\frac{1}{2r}+\frac{\varrho}{2}\right].
\end{equation*}
Similarly, we define
\begin{equation*}  
\mathcal{E}'(s,\sigma,y)= \sum_{(n,k)\in \Z^{1+d}} \alpha_{k}^n\eta'(s,\sigma-T_k^n,y-X_k^n),
\end{equation*}
with $\eta'\in \mathscr{C}_c^\infty(\R^+\times\R\times \R^d; \R)$  being a real scalar function
which is also compactly supported in $\R^+\times[-\varrho/2,\varrho/2]\times \R^d$.
For a fixed $\sigma\in \R$, the function
$\mathcal{E}'(s,\sigma,x)$ depends only on $(T_k^m,X_k^m,\alpha_k^m)$ with
\begin{equation*}
m\in \left[\sigma-\frac{1}{2r}-\frac{\varrho}{2},\,\sigma+\frac{1}{2r}+\frac{\varrho}{2}\right].
\end{equation*}
Then, as soon as $|\tau-\sigma|\geq 1/r +\varrho=:\uptau$, the random functions $\mathcal{E}(t,\tau,x)$ and
$\mathcal{E}'(s,\sigma,y)$ are independent, which means that  hypothesis $({\rm H}2)$
is satisfied. It remains to show  hypothesis $({\rm H}3)$. For this, we estimate
$\mathbb{E}[\mathcal{E}(t,\tau,x)\mathcal{E}'(s,\sigma,y)]$ as follows.
Using the independence of random variables $(T_k^n,X_k^n)$ and $\alpha_k^n$, and using
$\mathbb{E}[\alpha_k^n\alpha_l^m]=\delta_{mn}\delta_{kl}$,
 we obtain
\begin{eqnarray*}
  \mathbb{E}[\mathcal{E}(t,\tau,x)\mathcal{E}'(s,\sigma,y)]&=&
  \sum_{(n,k)\in \Z^{1+d}}\sum_{(m,l)\in \Z^{1+d}} \mathbb{E}[\alpha_k^n\alpha_l^m]\\
  &&\mathbb{E}[\eta(t,\tau-T_k^n,x-X_k^n)\eta'(s,\sigma-T_l^m,y-X_l^m)]\\
  &=& \sum_{(n,k)\in \Z^{1+d}} \mathbb{E}[(\alpha_k^n)^2]
  \mathbb{E}[\eta(t,\tau-T_k^n,x-X_k^n)\eta'(s,\sigma-T_k^n,y-X_k^n)]\\
  &=& \sum_{(n,k)\in \Z^{1+d}} \int_{[-1/2,1/2]^d}dz \int_{-\frac{1}{2r}}^{\frac{1}{2r}}d\theta\\
  &&\eta(t,\tau-n/r-\theta,x-k-z)\eta'(s,\sigma-n/r-\theta,y-k-z)\\
  &=&\sum_{(n,k)\in \Z^{1+d}} \int_{k+[-1/2,1/2]^d}dz \int_{(n-1/2)/r}^{(n+1/2)/r}d\theta\\
  &&\eta(t,\tau-\theta,x-z)\eta'(s,\sigma-\theta,y-z)\\
  &=& \int_{\R^d}dz \int_{\R}d\theta\, \eta(t,\tau-\theta,x-z)\eta'(s,\sigma-\theta,y-z)\\
  &=& \int_{\R^d}dz \int_{\R}d\theta\, \eta(t,\tau-\sigma+\theta,x-y+z)\eta'(s,\theta,z)\\
  &=:& \widetilde{\mathcal{R}}_{\eta,\eta'}(t,s,\tau-\sigma,x-y).
\end{eqnarray*}  
We then obtain $\widetilde{\mathcal{R}}_{\eta,\eta'}\in \mathscr{C}_c^\infty(\R^+\times\R^+\times\R\times \R^d; \R)$.
To construct the autocorrelation matrix $(\mathcal{R}_{\uptau\, ij})_{(i,j)}$, we first choose
real scalar functions  $\eta_i\in \mathscr{C}_c^\infty(\R^+\times\R\times \R^d; \R)$, for $i\in\{1,\ldots,d \}$, 
which are compactly supported in $\R^+\times[-\varrho/2,\varrho/2]\times \R^d$. Second, we define
the i-th component of the random electric field $E$ by
\begin{equation*}
  E_i(t,\tau,x)=\sum_{(n,k)\in \Z^{1+d}} \alpha_{k}^n\eta_i(t,\tau-T_k^n,x-X_k^n).
\end{equation*}
Finally, the autocorrelation matrix $\mathcal{R}_\uptau$ is defined by
\begin{equation*}
  \mathcal{R}_{\uptau\, ij}(t,s,\tau-\sigma,x-y):=\widetilde{\mathcal{R}}_{\eta_i,\eta_j}(t,s,\tau-\sigma,x-y).
\end{equation*}  
Another way to obtain the autocorrelation matrix $\mathcal{R}_\uptau$ is to take the
gradient of $\widetilde{\mathcal{R}}_{\eta,\eta'}(t,s,\tau-\sigma,x-y)$ with respect to the variables $x$ and $y$,
i.e. 
\begin{eqnarray*}
  \mathcal{R}_{\uptau\, ij}(t,s,\tau-\sigma,x-y)&:=&
  \partial_{x_i}\partial_{y_j}\widetilde{\mathcal{R}}_{\eta,\eta'}(t,s,\tau-\sigma,x-y)
  =\widetilde{\mathcal{R}}_{\partial_{x_i}\eta,\partial_{y_j}\eta'}(t,s,\tau-\sigma,x-y)\\
  &=&  \mathbb{E}[\partial_{x_i}\mathcal{E}(t,\tau,x)\partial_{y_j}\mathcal{E}'(s,\sigma,y)].
\end{eqnarray*}

\end{document}